\documentclass{article}

\usepackage
[
colorlinks=true,
linkcolor=blue,
anchorcolor=blue,
citecolor=blue,
urlcolor=blue,
plainpages=false,
pdfpagelabels
]{hyperref}

\usepackage{graphicx}
\usepackage{enumerate}
\usepackage{amsmath}
\usepackage{bbm}
\usepackage{mathrsfs}
\usepackage{amsfonts}
\usepackage[marginpar]{todo}
\usepackage{amsthm}
\usepackage[numbers]{natbib}
\usepackage{color}
\bibliographystyle{abbrvnaturl}

\newcounter{thm}
\newtheorem{theorem}[thm]{Theorem}
\newtheorem{definition}[thm]{Definition}
\newtheorem{corollary}[thm]{Corollary}
\newtheorem{lemma}[thm]{Lemma}
\newtheorem*{remark}{Remark}
\newtheorem*{example}{Example}


\newcommand{\tup}{\textup}

\newcommand\R{{\mathbb{R}}}
\newcommand\Z{{\mathbb{Z}}}

\newcommand\Stwo{{\mathbb{S}^2}}
\newcommand\Sone{{\mathbb{S}^1}}

\newcommand{\G}{\mathbb{G}}
\newcommand{\C}{\mathbb{C}}
\newcommand{\Quat}{\mathbb{H}}

\newcommand{\qOne}{\ensuremath{\mathbbm{1}}}
\newcommand{\qi}{\ensuremath{\mathbbm{i}}}
\newcommand{\qj}{\ensuremath{\mathbbm{j}}}
\newcommand{\qk}{\ensuremath{\mathbbm{k}}}
\newcommand{\qf}{\ensuremath{\mathbbm{f}}}
\newcommand{\qg}{\ensuremath{\mathbbm{g}}}

\newcommand{\spann}{\mathop{\mathrm{span}}}

\newcommand{\foh}{\frac{1}{2}}

\newcommand{\I}{\mathrm{I}}
\newcommand{\II}{\mathrm{II}}
\newcommand{\III}{\mathrm{III}}
\newcommand{\K}{\mathcal{K}}
\newcommand{\Hc}{\mathcal{H}}
\newcommand{\So}{\mathcal{S}}

\newcommand{\mattwo}[4]
{\left(\begin{array}{cc}
                        #1  & #2   \\
                        #3 &  #4
                   \end{array}\right) }

\newcommand{\tr}{\mathop{\mathrm{tr}}}

\definecolor{color1}{rgb}{0.7725490196078432, 0.8745098039215686, 1.0} 
\definecolor{color2}{rgb}{0.11764705882352941, 0.3254901960784314, 1.0} 
\definecolor{color3}{rgb}{0.023529411764705882, 0.0392156862745098, 0.7411764705882353} 
\definecolor{color4}{rgb}{1.0, 0.8980392156862745, 0.7725490196078432} 
\definecolor{color5}{rgb}{1.0, 0.4196078431372549, 0.12156862745098039} 
\definecolor{color6}{rgb}{0.7411764705882353, 0.11764705882352941, 0.023529411764705882} 
\definecolor{color7}{rgb}{1.0, 0.8980392156862745, 0.7725490196078432} 
\definecolor{color8}{rgb}{1.0, 0.792156862745098, 0.11764705882352941} 
\definecolor{color9}{rgb}{0.7411764705882353, 0.7215686274509804, 0.023529411764705882} 
\definecolor{color10}{rgb}{0.7725490196078432, 0.8745098039215686, 1.0} 
\definecolor{color11}{rgb}{0.12156862745098039, 0.6980392156862745, 1.0} 
\definecolor{color12}{rgb}{0.023529411764705882, 0.6470588235294118, 0.7411764705882353} 
\hypersetup{
  colorlinks,
  citecolor=color2,
  linkcolor=color3,
  urlcolor=color6}

\title{A discrete parametrized surface theory in $\R^3$}
\author{Tim Hoffmann\thanks{This research was supported by the DFG-Collaborative Research Center, TRR 109, ``Discretization in Geometry and Dynamics.''}, Andrew O. Sageman-Furnas, Max Wardetzky\thanks{This research was partially supported by the DFG-Collaborative Research Center, TRR 109, ``Discretization in Geometry and Dynamics.''}}
\begin{document}
\maketitle
\begin{abstract}
We propose a discrete surface theory in $\R^3$ that unites the most prevalent versions of discrete special parametrizations. This theory encapsulates a large class of discrete surfaces given by a Lax representation and, in particular, the one-parameter associated families of constant curvature surfaces. The theory is not restricted to integrable geometries, but extends to a general surface theory.
\end{abstract}AMS 2010 subject classification: primary 53A05; secondary 52C99\\

\section{Introduction}
A \emph{quad net} is a map from a strongly regular polytopal cell decomposition of a regular surface with all faces being quadrilaterals into $\R^3$ with nonvanishing straight edges. Notice, in particular, that nonplanar faces are admissible. In \emph{discrete differential geometry} quad nets are understood as discretizations of parametrized surfaces \cite{Bobenko:1999wj,Bobenko:2008tn,Bobenko:2008uh}. In this agenda many classes of special surfaces have been discretized using algebro-geometric approaches for integrable geometry---originally using discrete analogues of soliton theory techniques (e.g., discrete Lax pairs and finite-gap integration \cite{Bobenko:1996ug}) to construct nets, but more recently using the notion of 3D consistency (reviewed in \cite{Bobenko:2008tn}).\footnote{As in the smooth setting, these approaches have been successfully applied to space forms (see, e.g., \cite{Hoffmann:1999vb,Bobenko:2014cp,Burstall:2014dr,Burstall:2014tg}).} As an example consider the case of K-surfaces (i.e., surfaces of constant negative Gau{\ss} curvature). The integrability equations of classical surface theory are equivalent to the famous sine-Gordon equation \cite{Bianchi:1902vd,Darboux:1887tq}. In an integrable discretization the sine-Gordon equation becomes a finite difference equation for which integrability is encoded by a certain closing condition around a 3D cube. Both in the smooth and discrete setting, integrability is bound to specific choices of parameterizations, such as asymptotic line parametrizations for K-surfaces. In this way different classes of surfaces, such as minimal surfaces or surfaces of constant mean curvature, lead to different PDEs and give rise to different parameterizations. In the discrete case, this is reflected by developments that treat different special surfaces by disparate approaches. These integrable discretizations maintain characteristic properties of their smooth counterparts (e.g., the transformation theory of Darboux, B\"acklund, Bianchi, etc.) and sometimes give rise to satisfying self-contained theories \cite{Bobenko:2006ts} within the special classes that they consider. What has been lacking, however, is a unified discrete theory that lifts the restriction to special surface parametrizations. Indeed, different from the case of classical smooth surface theory, existing literature does not provide a general discrete theory for quad nets.

We propose a theory that encompasses the most prevalent versions of existing discrete special parametrizations (reviewed in \cite{Bobenko:1999us}), such as discrete conjugate nets \cite{Sauer:1970tk}, discrete (circular) curvature line nets \cite{Nutbourne:1988tg,Cieslinski:1997ub,Bobenko:2008hm}, discrete isothermic nets \cite{Bobenko:1996vq,Bobenko:2009tt}, and discrete asymptotic line nets \cite{Sauer:1950ca,Wunderlich:1951wc}. Our approach provides a curvature theory that, in particular, yields appropriate curvatures for previously defined discrete minimal \cite{Bobenko:1996vq}, discrete constant mean curvature (cmc) \cite{Pedit:1995wp,Bobenko:1999us,Hoffmann:1999vm}, discrete constant negative Gau{\ss} curvature \cite{Sauer:1950ca,Wunderlich:1951wc,Bobenko:1996ug,Hoffmann:1999wq,pinkall2008designing}, and discrete developable surfaces \cite{Liu:2006tm}. This theory not only retrieves the curvature definitions given in \cite{Schief20061484,Bobenko:2010eg} in the case of planar faces but extends to the general setting of nonplanar quads. Moreover, for the first time, it provides a way to understand the one-parameter \emph{associated families} of discrete surfaces of constant curvature, both in terms of discrete curvature and discrete conformality.

The fundamental property of our approach is the following \emph{edge-constraint} that couples discrete surface points and normals: the average normal along an edge is perpendicular to that edge. This condition arises from a Steiner-type (i.e., offset and mixed area) perspective on curvature and, while surprisingly elementary, has profound consequences for the theory. By introducing a Gau{\ss} map for general nonplanar quad nets, our theory builds on basic construction principles of the classical smooth setting.

The paper is organized as follows: after the definition of edge-constraint nets (Section \ref{sec:edgeConstraint}) we introduce their curvatures, naturally extending the work
of Schief \cite{Schief20061484} and Bobenko, Pottmann, and Wallner \cite{Bobenko:2010eg}. These curvatures are then shown to be consistent with first, second, and third fundamental forms for edge-constraint nets. We describe the classical discrete integrable surfaces of constant curvature (circular minimal, circular cmc, and asymptotic and circular K-nets) and show that they are indeed edge-constraint nets of constant curvature (Section \ref{sec:constantCurvatureNets}). Even more, we show that they possess associated families that are also edge-constraint nets exhibiting constant curvature. In particular, the proof for cmc nets shows a rather unexpected connection between their 3D compatibility cube and the general Bianchi permutability cube for discrete curves \cite{Hoffmann:2008ub,TabachnikovBicycleMap}. The section closes with a discussion of discrete developable nets. We then provide a short treatment on how discrete conformality is represented in our theory (Section \ref{sec:conformal}), showing that the members of the associated family of minimal nets are conformally equivalent. We conclude (Section \ref{sec:LaxPairedgeConstraint}) by showing that a rather general class of nets generated by a Sym--Bobenko formula is in fact a subset of edge-constraint nets.

\section{Edge-constraint nets}
\label{sec:edgeConstraint}
\subsection{Setup}
A natural discrete analogue of a parametrized surface patch is a map from $\Z^2 \to \R^3$ corresponding to a single chart. To consider discrete atlases we relax the combinatorial restrictions and think more generally of maps from \emph{quadrilateral graphs}. We will use the words quadrilateral and quad interchangeably.

\begin{definition}[Quadrilateral net]
A \emph{quad graph} $\G$ is a strongly regular polytopal cell decomposition of a regular surface with all faces being quadrilaterals. A map $f: \G \to \R^3$ is called a \emph{(quad) net}.
\end{definition}

\begin{figure}[ht]
  \centering
  \includegraphics[width=0.7\hsize]{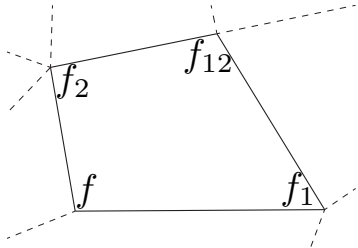}
  \caption{Shift notation used to describe the points of a quad net $f$, even with irregular combinatorics.}
  \label{fig:shiftNotation}
\end{figure}

\begin{remark}[Shift notation]
As seen in Figure \ref{fig:shiftNotation}, we will use \emph{shift notation} \cite{Bobenko:2008tn} to describe the points of a quad net: when the underlying quad graph has the combinatorics of $\Z^2$, we denote a point by $f = f_{k,l}$ for some $k, l \in \Z$ and define the shift operators $f_1 := f_{k+1, l}$ and $f_2 := f_{k, l+1}$. The point diagonal to $f$ is given by a shift in each direction, $f_{12} := f_{k+1, l+1}$. In what follows we do not restrict ourselves to the combinatorics of $\Z^2$, but will continue to use shift notation, as there is no ambiguity when the discussion is restricted to a point $f$; oriented edge $f_i - f$ with $i = 1,2$; or quad $(f, f_1, f_{12}, f_2)$.
\end{remark}

Immersed parametrized surfaces in the smooth setting can be thought of either as a smooth family of points or as the envelope of a family of tangent planes. One defines a \emph{contact element} at a point $p \in \R^2$ of a parametrized surface $f: D \subset \R^2 \to \R^3$ as the pair $(f(p), P(p))$, consisting of a point $f(p)$ and the oriented tangent plane $P(p)$ passing through it. $P(p)$ is completely determined by its unit normal $n(p)$ when anchored at $f(p)$; considering $n(p)$ at the origin defines the \emph{Gauss map} $n: D \subset \R^2 \to \Stwo$. Using this perspective we consider parametrized surfaces as the pair of maps $(f, n)$, an immersion together with its Gau{\ss} map. 

Analogously we consider discrete parametrized surfaces not as a single quad net, but as a pair of nets that are weakly coupled, mimicking the relationship between an immersion and its Gau{\ss} map in the smooth setting. This pair will be known as an \emph{edge-constraint net} and is our main object of study.

\begin{definition}[Edge-constraint net]
Let $\G$ be a quad graph. We call a pair of quadrilateral nets $(f,n): \G \to \R^3 \times \Stwo$ a \emph{contact element net}. A contact element net is called an \emph{edge-constraint net} if it satisfies the following:
\begin{description}
\item[Edge-constraint] For each pair of points of $f$ connected by an edge, the average of the normals at those points is perpendicular to the edge, i.e., for $i=1,2$ we have $f_i - f \perp \foh(n_i + n)$.
\end{description}
We further assume that $f$ contains no vanishing edges, i.e., that $f_i - f$ is always nonzero.

The maps $f:\G \to \R^3$ and $n: \G \to \Stwo$ are called the \emph{(discrete) immersion} and \emph{Gau{\ss} map}, respectively.
\end{definition}

\begin{remark}
As edge-constraint nets are in fact a pair of nets, unless we state explicitly that we are referring only to the immersion $f$ or the Gau{\ss} map $n$, the combinatorial language of \emph{vertex, edge, face} (or \emph{quad}) will refer to the combinatorics of the underlying quad graph $\G$.
\end{remark}

The edge-constraint discretizes a coupling between the Gau{\ss} map and immersion; in the smooth setting it is generic in the following sense.

\begin{lemma}
Let $f:\R^2 \to \R^3$ parametrize a smooth surface patch with Gau{\ss} map $n: \R^2 \to \Stwo$. For every point $p \in \R^2$ and unit vector $v \in \R^2$, let the images of the line $p + t v$ where $t \in \R$ be given (with a slight abuse of notation) by $f(t) := f(p + t v)$ and $n(t) := n(p + tv)$, respectively. Then the central and one-sided difference approximations to the edge-constraint along $f(t)$ are satisfied up to second order, i.e., 
\begin{eqnarray}
(f(\epsilon) - f(- \epsilon)) \cdot (n(\epsilon) + n(- \epsilon)) &=& 0 +  O(\epsilon^3) ~ \mathrm{and}~  \nonumber \\
(f(\epsilon) - f(0)) \cdot (n(\epsilon) + n(0)) &=& 0 +  O(\epsilon^3). \nonumber
\end{eqnarray}
\begin{proof}
Note that $f'(t) \cdot n(t) = 0$ by construction, so in particular $f''(t) \cdot n(t) + f'(x) \cdot n'(t) = 0$. The statement then follows by Taylor expanding $f(t)$ and $n(t)$ around $t=0$ with small parameter $\epsilon > 0$.
\end{proof}
\end{lemma}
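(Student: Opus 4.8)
The plan is to reduce both estimates to a single structural fact about the Gau{\ss} map---that the normal is everywhere orthogonal to the tangent---and then to track the order of vanishing through Taylor expansion. First I would record the defining identity: since $n(t)$ is the unit normal of the surface along the curve $f(t) = f(p+tv)$, which lies on the surface, the velocity $f'(t)$ is tangent and hence
\[
f'(t) \cdot n(t) = 0 \qquad \text{for all } t.
\]
Differentiating this identity in $t$ yields the companion relation
\[
f''(t) \cdot n(t) + f'(t) \cdot n'(t) = 0,
\]
which will be the key ingredient for the one-sided estimate.

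Next I would Taylor expand $f$ and $n$ about $t = 0$ to third order and treat the two cases separately. For the central difference, the odd/even parity of the two factors does most of the work: $f(\epsilon) - f(-\epsilon) = 2\epsilon f'(0) + O(\epsilon^3)$ contains only odd powers of $\epsilon$, while $n(\epsilon) + n(-\epsilon) = 2n(0) + O(\epsilon^2)$ contains only even powers. Their product therefore has lowest-order term $4\epsilon\,\bigl(f'(0)\cdot n(0)\bigr)$, which vanishes by the first identity, and the next surviving term is already $O(\epsilon^3)$; crucially, no $\epsilon^2$ term can appear. Thus the central difference is $0 + O(\epsilon^3)$ using orthogonality alone.

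For the one-sided difference I would expand $f(\epsilon) - f(0) = \epsilon f'(0) + \foh\,\epsilon^2 f''(0) + O(\epsilon^3)$ and $n(\epsilon) + n(0) = 2n(0) + \epsilon n'(0) + O(\epsilon^2)$ and collect coefficients by order. The $\epsilon^1$ coefficient is $2\,\bigl(f'(0)\cdot n(0)\bigr) = 0$ by the first identity, and the $\epsilon^2$ coefficient is exactly $f'(0)\cdot n'(0) + f''(0)\cdot n(0)$, which vanishes by the differentiated identity. Hence the one-sided difference is likewise $0 + O(\epsilon^3)$.

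The only genuine subtlety---and the step I would flag as the crux---is the one-sided case: here the even-order terms are not removed by parity, so the $\epsilon^2$ coefficient must be cancelled by hand, and this is precisely where differentiating the orthogonality relation becomes essential. The central case, by contrast, is essentially automatic once the parity of the two factors is observed, and so requires no appeal to the second identity.
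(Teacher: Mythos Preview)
Your proposal is correct and follows essentially the same approach as the paper: record $f'(t)\cdot n(t)=0$, differentiate once, then Taylor expand both factors about $t=0$ and collect terms. Your parity observation for the central case (odd factor times even factor forces the absence of an $\epsilon^2$ term, so the differentiated identity is not even needed there) is a nice sharpening that the paper's terse proof leaves implicit, but the overall strategy is the same.
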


The simplest class of edge-constraint nets are those given by quadrilateral nets in spheres. 

\begin{lemma}
\label{lem:sphericalNets}
Let $f: \G \to r \Stwo$ be a quad net in the sphere of radius $r > 0$. Then $f$ together with the Gau{\ss} map $n = f/r$ is an edge-constraint net.
\end{lemma}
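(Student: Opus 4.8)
The plan is to verify the two defining requirements directly: that $(f,n)$ is a contact element net, i.e.\ that $n$ takes values in $\Stwo$, and that the edge-constraint $f_i - f \perp \foh(n_i + n)$ holds for $i = 1,2$. Both reduce to the single observation that every vertex of $f$ sits at distance $r$ from the origin. For the first, since $f$ maps into $r\Stwo$ we have $|f| = r$ at every vertex, so $|n| = |f|/r = 1$ and hence $n: \G \to \Stwo$, making $(f,n)$ a contact element net.

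For the edge-constraint, I would fix an oriented edge $f_i - f$ and substitute $n = f/r$, $n_i = f_i/r$, obtaining
\[
(f_i - f)\cdot \foh(n_i + n) = \frac{1}{2r}\,(f_i - f)\cdot(f_i + f) = \frac{1}{2r}\bigl(|f_i|^2 - |f|^2\bigr),
\]
where the last step uses the elementary identity $(a-b)\cdot(a+b) = |a|^2 - |b|^2$. Because both $f$ and $f_i$ lie on the sphere of radius $r$, we have $|f_i|^2 = |f|^2 = r^2$, so the right-hand side vanishes. This is precisely the edge-constraint, and the same computation holds verbatim for both $i = 1$ and $i = 2$.

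There is essentially no obstacle: the entire content is the difference-of-squares identity combined with the constancy of $|f|$ on the sphere. The only point worth recording is that a quad net carries no vanishing edges by definition, so the assumption $f_i - f \neq 0$ appearing in the definition of an edge-constraint net is automatically met; hence $(f, f/r)$ is indeed an edge-constraint net.
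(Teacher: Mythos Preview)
Your proof is correct and is exactly the intended argument; the paper states this lemma without proof, treating it as immediate, and your verification via the difference-of-squares identity is the natural way to fill in the details.
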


Edge-constraint nets naturally exhibit offset nets by adding multiples of the Gau{\ss} map to the original immersion while keeping the Gau{\ss} map fixed. This observation is the foundation of their curvature theory. 

\begin{lemma}[Offset nets]
\label{lem:offsetNets}
For any $t \in \R$ and contact element net $(f,n):\G \to \R^3 \times \Stwo$, the contact element net $(f + t\,n, n)$, where linear combinations are taken on vertices, is an edge-constraint net if and only if $(f, n)$ is an edge-constraint net.
\end{lemma}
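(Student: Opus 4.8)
The plan is to reduce the edge-constraint to a single scalar identity and then observe that passing to the offset only introduces a term that vanishes identically because $n$ takes values in the unit sphere. First I would rewrite the edge-constraint for an edge in direction $i \in \{1,2\}$ as the scalar equation $(f_i - f)\cdot(n_i + n) = 0$, which is equivalent to the stated perpendicularity $f_i - f \perp \foh(n_i + n)$ since the nonzero scalar factor $\foh$ may be dropped. Denote the offset immersion by $\tilde f := f + t\,n$; its Gau{\ss} map is unchanged. Along the edge one has $\tilde f_i - \tilde f = (f_i + t\,n_i) - (f + t\,n) = (f_i - f) + t\,(n_i - n)$.

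Next I would substitute this into the edge-constraint for the offset net and expand bilinearly: $(\tilde f_i - \tilde f)\cdot(n_i + n) = (f_i - f)\cdot(n_i + n) + t\,(n_i - n)\cdot(n_i + n)$. The key step---and essentially the entire content of the lemma---is that the second summand vanishes: because $n, n_i \in \Stwo$ are unit vectors, $(n_i - n)\cdot(n_i + n) = |n_i|^2 - |n|^2 = 1 - 1 = 0$. Hence $(\tilde f_i - \tilde f)\cdot(n_i + n) = (f_i - f)\cdot(n_i + n)$ holds on every edge, so $(\tilde f, n)$ satisfies the edge-constraint exactly when $(f,n)$ does. Since this identity holds for all $t \in \R$ and the offset construction is invertible (offsetting $(\tilde f, n)$ by parameter $-t$ recovers $(f,n)$), the equivalence is symmetric, which yields the claimed biconditional.

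The only point requiring care beyond this one-line computation is the standing nonvanishing-edge hypothesis built into the definition of an edge-constraint net. The relation $\tilde f_i - \tilde f = (f_i - f) + t\,(n_i - n)$ shows that offsetting shifts each edge vector by a multiple of $n_i - n$, so in principle a nonzero edge of $f$ could be carried to a zero edge of $\tilde f$ (and conversely) at isolated values of $t$. I would therefore either restrict to the values of $t$ avoiding these finitely many exceptions on each edge, or read the lemma as an equivalence of the edge-constraint condition itself, with the nonvanishing requirement understood as a genericity assumption carried along unchanged. I expect this bookkeeping, rather than the algebra, to be the only mild obstacle, since the core cancellation is forced entirely by $|n| \equiv 1$ on $\Stwo$.
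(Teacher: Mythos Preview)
Your argument is correct and is precisely the computation the paper has in mind; in fact, the paper states this lemma without proof, evidently regarding the identity $(n_i-n)\cdot(n_i+n)=|n_i|^2-|n|^2=0$ as immediate from $n:\G\to\Stwo$. Your observation about the nonvanishing-edge hypothesis is a genuine subtlety that the paper does not address explicitly, and your reading of it as a genericity assumption is reasonable.
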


\subsection{Curvatures from Offsets}
\label{sec:curvatureTheory}
Let $f: \R^2 \to \R^3$ be a smooth parametrized surface with Gau{\ss} map $n: \R^2 \to \R^3$. For each $t \in \R$ and $(x,y) \in \R^2$ we define the offset surface by $f^t(x,y) := f(x,y) + t \, n(x,y)$. We only consider smooth parametrizations $f$ which give rise to smooth offsets $f^t$ for small enough $|t|$. It is easily seen that $n$ is also the Gau{\ss} map for the offset surface $f^t$, thus the construction of offset nets seen in Lemma \ref{lem:offsetNets}. For all $(x,y) \in \R^2$, the area element at $f^t(x,y)$ can be expressed in terms of the area element, mean and Gau{\ss} curvatures at $f(x,y)$. This relationship is known as the \emph{Steiner formula} and is best understood through the \emph{mixed area form}.

\begin{definition}[Mixed area form]
Let $g,h: \R^2 \to \R^3$ parametrize two smooth surfaces that share a Gau{\ss} map $N: \R^2 \to \Stwo$. For every $(x,y) \in \R^2$, we define the \emph{mixed area form} in the tangent plane $P \perp N(x,y)$ by
\begin{equation}
\label{eq:curvatureForms}
A(g,h) := \foh(\det(g_x, h_y, N) + \det(h_x, g_y, N)),
\end{equation}
where subscripts denote partial derivatives. When $g = h$, the mixed area form reduces to the area element of $g$.
\end{definition}

\begin{remark}
To define the mixed area form we switched notation to a capital $N,$ as opposed to little $n$, for the Gau{\ss} map. While in the smooth setting these two objects coincide, the discrete Gau{\ss} map for an edge-constraint net (also denoted by $n$) lives on vertices, whereas we will define the mixed area form for an edge-constraint net on faces. We will define a new unit vector per face, which we call the \emph{projection direction} (denoted by $N$), that defines the tangent plane $P$ where this mixed area form lives. 
\end{remark}

We can now state the Steiner formula and consequently define the mean and Gau{\ss} curvature functions on a smooth parametrized surface. The same definitions will carry over to edge-constraint nets.

\begin{theorem}[Steiner formula]
Let $f: \R^2 \to \R^3$ be a smooth surface parametrization with Gau{\ss} map $n: \R^2 \to \R^3$ and offset surface $f^t: \R^2 \to \R^3$. Then for each $(x,y) \in \R^2$ the following relationship holds 
\begin{eqnarray}
\label{eq:steinerFormula}
A(f^t, f^t) &=& A(f, f) + 2 t A(f,n) + t^2 A(n,n), \nonumber \\
&=& (1 + 2 t \Hc + t^2 \K) A(f,f), 
\end{eqnarray}
defining
\begin{equation}
\label{eq:meanGaussCurvature}
\Hc := \frac{A(f,n)}{A(f,f)} ~\mathrm{and}~ \K := \frac{A(n,n)}{A(f,f)},
\end{equation}
 as the mean and Gau{\ss} curvature functions, respectively.
\end{theorem}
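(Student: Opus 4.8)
The plan is to reduce everything to the multilinearity and alternating property of the $3\times 3$ determinant, after first checking that all four mixed area forms appearing in the statement are computed with respect to the same projection direction $N = n$. Since $f$ has Gau{\ss} map $n$, we have $f_x \cdot n = f_y \cdot n = 0$; and differentiating $n \cdot n = 1$ gives $n_x \cdot n = n_y \cdot n = 0$. Writing $f^t_x = f_x + t\,n_x$ and $f^t_y = f_y + t\,n_y$, both are therefore orthogonal to $n$, so $n$ is (up to sign) a unit normal of $f^t$; because $f^t$ is a regular surface for small $|t|$ and $f^0 = f$ has Gau{\ss} map $n$, continuity in $t$ fixes the sign and $n$ is genuinely the Gau{\ss} map of $f^t$. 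The same reasoning (or the spherical case, Lemma \ref{lem:sphericalNets}) shows that $n$, viewed as a map into $\Stwo$, also has Gau{\ss} map $n$. Hence $A(f,f)$, $A(f,n)$, $A(n,n)$ and $A(f^t,f^t)$ are all well defined using the single projection direction $N = n$.

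The main computation is then to expand $A(f^t,f^t)$. Since $g = h$ reduces the mixed area form to the area element, $A(f^t,f^t) = \det(f^t_x, f^t_y, n)$. Substituting $f^t_x = f_x + t\,n_x$ and $f^t_y = f_y + t\,n_y$ and expanding by multilinearity in the first two slots yields four determinants:
\begin{equation*}
\det(f_x, f_y, n) + t\,\det(f_x, n_y, n) + t\,\det(n_x, f_y, n) + t^2\,\det(n_x, n_y, n).
\end{equation*}
I would then identify these with the mixed area forms: the first is $A(f,f)$, the last is $A(n,n)$, and grouping the two middle terms gives $t\big(\det(f_x,n_y,n) + \det(n_x,f_y,n)\big) = 2t\,A(f,n)$ directly from the definition of $A(\cdot,\cdot)$. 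This establishes the first line of the Steiner formula.

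For the second line I would factor out $A(f,f)$ (nonzero since $f$ is an immersion) and substitute the definitions $\Hc = A(f,n)/A(f,f)$ and $\K = A(n,n)/A(f,f)$, giving $A(f^t,f^t) = (1 + 2t\,\Hc + t^2\,\K)\,A(f,f)$. There is no deep obstacle here; the only point requiring care, and the one I would make explicit, is the bookkeeping that justifies computing every form against the same normal $N = n$ --- that is, confirming that passing to the offset does not change the Gau{\ss} map. Once that is in place, the result is a one-line consequence of the trilinearity of the determinant.
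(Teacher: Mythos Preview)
Your argument is correct and is the standard derivation of the Steiner formula. The paper itself offers no proof of this theorem; it is quoted as a classical smooth result that motivates the subsequent discrete definitions, so there is nothing to compare against beyond noting that your expansion via multilinearity of the determinant is exactly the expected one-line justification.
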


\begin{figure}[t]
  \centering
  \includegraphics[width=.50\hsize]{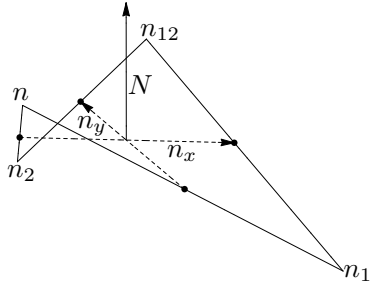}
  \caption{The partial derivatives, $n_x$ and $n_y$, of a (possibly non-planar) quadrilateral from the Gau{\ss} map $n$ of an edge-constraint net as defined by the midpoint connectors. A \emph{projection direction}, $N \perp n_x, n_y$ is also shown, determining the face tangent plane $P$ where the mixed area form is defined.}
  \label{fig:partialNotation}
\end{figure}

\begin{definition}
\label{def:partialsBigNCurvatures}
Consider a single quadrilateral from an edge-constraint net $(f,n)$. We define the partial derivatives as the midpoint connectors of the (possibly non-planar) quadrilaterals for each of $f$ and $n$, e.g., for the Gau{\ss} map, as shown in Figure \ref{fig:partialNotation}, we have
\begin{equation}
\label{eq:partialDerivatives}
n_x := \foh(n_{12} + n_1) - \foh(n_{2} + n), ~ n_y := \foh(n_{12} + n_2) - \foh(n_1 + n),
\end{equation}
and likewise for $f$.
Then the set of admissible \emph{projection directions} is defined as
\begin{equation}
U := \{N \in \Stwo | ~ N \perp \spann{\{n_x,n_y\}}\}.
\end{equation}
Generically, the projection direction is unique (up to sign) and we choose
\begin{equation}
N := \frac{n_x \times n_y}{\| n_x \times n_y \|}.
\end{equation}
In the \emph{(quad) tangent plane} $P \perp N$ we define the \emph{(discrete) mixed area form} via Equation (\ref{eq:curvatureForms}), yielding a Steiner formula (Equation (\ref{eq:steinerFormula})).
\end{definition}

\begin{remark}[Degenerate Gau{\ss} maps]
The set of admissible projection directions also generates a consistent curvature theory in the degenerate situation where the partial derivatives of the Gau{\ss} map are not linearly independent. Degenerate Gau{\ss} maps naturally arise in the theory of developable surfaces, so we defer this discussion to the theory of developable edge-constraint nets in Section \ref{sec:developable}.
\end{remark}
\begin{definition}[Edge-constraint curvatures] 
Using the Steiner formula (Equation (\ref{eq:steinerFormula})) we define \emph{mean} and \emph{Gau{\ss} curvatures} for edge-constraint nets via Equation (\ref{eq:meanGaussCurvature}).
\end{definition}

\begin{remark}[Degenerate immersions]
Clearly the curvatures are only well defined when the immersion has non-vanishing area $A(f,f) \neq 0$. Every statement we make about curvatures will assume the immersion quad has non-vanishing area.
\end{remark}
\begin{remark}[Sign of projection direction]
The sign of the projection direction in the generic setting does not correspond to a change in local orientation. The mixed area form will obviously change sign, but the mean and Gau{\ss} curvatures are invariant to this choice. However, flipping the Gau{\ss} map ($n \to -n$) does change the sign of the mean curvature, as expected.
\end{remark}
\begin{remark}[Choice of partial derivatives]
The choice to define the partial derivatives as the midpoint connectors is to guide intuition. In fact, one has the freedom to choose \emph{any linear combination} of the midpoint connectors to be the partial derivatives, as long as \emph{the same combination} is chosen for both the immersion and the Gau{\ss} map. This corresponds to the freedom to locally reparametrize in the smooth setting. The mean and Gau{\ss} curvatures and forthcoming definitions of principal curvatures and curvature line fields are all invariant to this choice. As expected, the mixed area forms and fundamental forms will change as these are not invariant to a local reparametrization in the smooth setting either.
\end{remark}
In the smooth setting one can also derive the mean and Gau{\ss} curvatures at a point via the fundamental forms and shape operator living in the tangent plane to that point. The shape operator is a real self-adjoint bilinear form whose eigenvalues and eigenvectors are the principal curvatures and curvature lines, respectively. From Definition \ref{def:partialsBigNCurvatures} we can discretize the fundamental forms and shape operator in the plane perpendicular to the projection direction of each face of an edge-constraint net.

\begin{definition}[Fundamental forms]
Consider a single quad from an edge-constraint net $(f,n)$. Let $\pi$ be the projection into the quad tangent plane P. Set $\hat f_x := \pi(f_x) = f_x - (f_x \cdot N) N$ and similarly for $\hat f_y$.\footnote{Since $N \perp n_x, n_y$ by Definition \ref{def:partialsBigNCurvatures}, $\hat n_x = n_x$ and $\hat n_y = n_y$.} We define the \emph{fundamental forms} and \emph{shape operator} by:
\begin{eqnarray}
\I &:= \mattwo{\hat f_x \cdot \hat f_x}{\hat f_x \cdot \hat f_y}{\hat f_y \cdot \hat f_x}{\hat f_y \cdot \hat f_y}, ~~ \II  &:= \mattwo{\hat f_x \cdot n_x}{\hat f_x \cdot n_y}{\hat f_y \cdot n_x}{\hat f_y \cdot n_y}, \nonumber \\
\III &:= \mattwo{n_x \cdot n_x}{n_x \cdot n_y}{n_y \cdot n_x}{n_y \cdot n_y},~~\So &:= ~  \I^{-1} \II.
\end{eqnarray}
The eigenvalues $(k_1, k_2)$ and eigenvectors of the shape operator $S$ are the \emph{principal curvatures} and \emph{curvature line fields} of the quadrilateral.
\end{definition}

The existence of principal curvatures and curvature line fields follows from the symmetry of the second fundamental form:

\begin{lemma}
Consider a single quad from an edge-constraint net $(f,n)$, then the second fundamental form is symmetric.
\begin{proof}
In the above notation we want to show $\hat f_x \cdot n_y - n_x \cdot \hat f_y = 0$. As $N \perp n_x, n_y$ this quantity is the same when unprojected, i.e., 
\begin{equation}
\hat f_x \cdot n_y - n_x \cdot \hat f_y = f_x \cdot n_y - n_x \cdot f_y.
\end{equation}
Expanding out $f_x \cdot n_y - n_x \cdot f_y$ one finds it is a constant multiple of the sum of the edge-constraint conditions once around the quadrilateral, which vanishes as it vanishes on each edge by assumption.
\end{proof}
\end{lemma}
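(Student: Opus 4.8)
The plan is to reduce the symmetry of $\II$ to the single scalar identity $\hat f_x \cdot n_y = n_x \cdot \hat f_y$, since the two off-diagonal entries of $\II$ are exactly these quantities and the diagonal entries pose no issue. First I would remove the projection $\pi$ from the statement: because $N \perp n_x, n_y$ by Definition \ref{def:partialsBigNCurvatures}, the $N$-component of $f_x$ is annihilated when paired against $n_y$, so $\hat f_x \cdot n_y = f_x \cdot n_y - (f_x \cdot N)(N \cdot n_y) = f_x \cdot n_y$, and likewise $n_x \cdot \hat f_y = n_x \cdot f_y$. This turns the goal into the unprojected identity $f_x \cdot n_y - n_x \cdot f_y = 0$, which is now a purely algebraic statement in the eight vectors $f, f_1, f_2, f_{12}, n, n_1, n_2, n_{12}$.

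Next I would substitute the midpoint-connector definitions $f_x = \foh(f_{12} + f_1 - f_2 - f)$, $n_y = \foh(n_{12} + n_2 - n_1 - n)$, and the analogous expressions for $n_x$ and $f_y$, then expand $f_x \cdot n_y - n_x \cdot f_y$ into mixed products $f_i \cdot n_j$. The outcome to anticipate is that every product pairing the immersion and the Gau{\ss} map at the \emph{same} vertex (e.g.\ $f_{12}\cdot n_{12}$, $f\cdot n$) cancels, and so do the products \emph{across} the two diagonals $(f,f_{12})$ and $(f_1,f_2)$, namely $f\cdot n_{12}$, $f_{12}\cdot n$, $f_1\cdot n_2$, $f_2\cdot n_1$; what survives is precisely the set of products along the four edges of the quad. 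Reassembling these, one verifies
\begin{equation}
f_x \cdot n_y - n_x \cdot f_y = -\foh\bigl[(f_1 - f)\cdot(n_1 + n) + (f_{12} - f_1)\cdot(n_{12} + n_1) + (f_2 - f_{12})\cdot(n_2 + n_{12}) + (f - f_2)\cdot(n + n_2)\bigr], \nonumber
\end{equation}
i.e.\ a fixed multiple of the oriented sum of the four edge-constraint quantities taken once around the boundary of the quad. Each bracketed summand vanishes by the edge-constraint hypothesis, so the right-hand side is zero and the claim follows.

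I expect the only real obstacle to be the bookkeeping in this expansion: the argument carries no conceptual difficulty, but one must track signs carefully so that the same-vertex and cross-diagonal terms indeed cancel, and so that the remaining eight edge products regroup, with the correct orientation, into the four boundary edge-constraints. A clean way to organize this, which I would adopt, is to record the coefficient of each product $f_i \cdot n_j$ in a small table for $f_x \cdot n_y$ and for $n_x \cdot f_y$ separately, subtract entrywise, and then match the resulting coefficients against those obtained by expanding the oriented boundary sum above; agreement of the two tables simultaneously confirms the identity and pins down the constant $-\foh$.
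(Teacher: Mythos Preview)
Your proposal is correct and follows essentially the same approach as the paper: first remove the projection using $N\perp n_x,n_y$, then expand $f_x\cdot n_y - n_x\cdot f_y$ in terms of the eight vertex vectors and regroup into the four edge-constraint expressions around the quad. The paper merely asserts this last expansion without writing it out, whereas you carry it through explicitly and identify the constant as $-\tfrac12$; your bookkeeping strategy with coefficient tables is a sound way to verify it.
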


The mean and Gau{\ss} curvatures per quadrilateral defined via the Steiner formula are equal to the ones derived from the eigenvalues of the shape operator.

\begin{lemma}[Curvature and fundamental form relationships]
\label{lem:curvaturesAndFundamentalForms}
The following relations hold true in the smooth and discrete case:\\
\indent 1. $\K = k_1 \, k_2 = \det\II / \det\I$,\\
\indent 2. $\Hc = \foh \left( k_1 + k_2\right)$,\\
\indent 3. $\III - 2 \Hc\, \II + \K\, \I = 0$, and\\
\indent 4. $A(f,f)^2 = \det\I$.
\end{lemma}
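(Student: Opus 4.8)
The plan is to establish all four relations simultaneously by exploiting the fact that the definitions of curvatures from the Steiner formula and from the shape operator are both built out of the same mixed area forms $A(f,f)$, $A(f,n)$, and $A(n,n)$, together with the fundamental forms $\I$, $\II$, $\III$. The key observation is that everything lives in the tangent plane $P \perp N$, so I can choose an orthonormal basis $\{e_1, e_2\}$ of $P$ and express the projected partials $\hat f_x, \hat f_y$ and the Gau{\ss}-map partials $n_x, n_y$ as $2$-vectors in these coordinates. With this, each mixed area form becomes a determinant of $2 \times 2$ matrices, and the fundamental forms become Gram-type matrices, so that all four identities reduce to linear algebra in two dimensions.

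\textbf{The central computation.}
First I would prove relation 4, since it anchors everything. Writing $\hat f_x, \hat f_y$ in the basis $\{e_1, e_2\}$ gives a matrix $F$ whose columns are these coordinate vectors; then $\I = F^\top F$ and $A(f,f) = \det(\hat f_x, \hat f_y, N) = \det F$ (the $N$-component drops out after projection), so $\det \I = (\det F)^2 = A(f,f)^2$ immediately. Next, letting $G$ be the coordinate matrix of $(n_x, n_y)$, I observe that $A(f,n) = \foh(\det(\hat f_x, n_y, N) + \det(n_x, \hat f_y, N))$ equals $\foh\bigl(\det(F^{(1)} G^{(2)}) + \det(G^{(1)} F^{(2)})\bigr)$, where superscripts denote columns; this is a standard polarization of the determinant and equals $\foh \det F \cdot \tr(F^{-1} G) \cdot$ (up to the usual cofactor bookkeeping). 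The cleanest route is to note $\II = F^\top G$ and $A(n,n) = \det G$, so that $\det \So = \det(\I^{-1}\II) = \det(F^\top G)/\det(F^\top F) = \det G / \det F = A(n,n)/A(f,f) = \K$. Since $\det \So = k_1 k_2$, relation 1 follows, and $\det \II / \det \I = \det(F^\top G)/\det(F^\top F) = \det G/\det F$ gives the second equality in relation 1.

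\textbf{Mean curvature and the Cayley--Hamilton step.}
For relation 2 I would show $\Hc = A(f,n)/A(f,f) = \foh \tr \So$. Using the basis representation, $A(f,n) = \foh(\det(\hat f_x, n_y, N) + \det(n_x, \hat f_y, N))$ unpacks via the polarized-determinant identity to $\foh \det F \cdot \tr(F^{-1}G)$, while $\foh \tr \So = \foh \tr(\I^{-1}\II) = \foh \tr((F^\top F)^{-1} F^\top G) = \foh \tr(F^{-1} G)$ after cancelling $F^\top$; dividing by $A(f,f) = \det F$ then matches the two expressions and yields $\Hc = \foh \tr \So = \foh(k_1 + k_2)$. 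Finally, relation 3 is the matrix Cayley--Hamilton identity: $\So$ satisfies its own characteristic polynomial $\So^2 - (\tr \So)\So + (\det \So)\,\mathrm{Id} = 0$, i.e., $\So^2 - 2\Hc \So + \K\,\mathrm{Id} = 0$; multiplying on the left by $\I$ and using $\I \So = \II$ and $\I \So^2 = \II \I^{-1} \II = \III$ (which requires verifying $\III = \II^\top \I^{-1} \II$, equivalently $G^\top G = (G^\top F)(F^\top F)^{-1}(F^\top G)$) converts this into $\III - 2\Hc \II + \K \I = 0$.

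\textbf{Main obstacle.}
The step I expect to require the most care is the identity $\III = \II \I^{-1} \II$ underlying relation 3, since naively $G^\top G = (G^\top F)(F^\top F)^{-1}(F^\top G)$ holds only when $F$ is invertible as a $2\times 2$ matrix, which is exactly the nondegeneracy assumption $A(f,f) \neq 0$ flagged in the \emph{Degenerate immersions} remark. The subtlety is that $\II = F^\top G$ is generally \emph{not} symmetric as written, yet relation 3 treats it as symmetric; the resolution is the previously proved symmetry lemma ($\hat f_x \cdot n_y = n_x \cdot \hat f_y$), which guarantees $F^\top G = G^\top F$, so the characteristic-polynomial manipulation is self-consistent and $\So$ is genuinely diagonalizable with real eigenvalues. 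I would therefore carry the symmetry lemma explicitly into the relation-3 derivation rather than relying on the formula $\III = \II \I^{-1}\II$ in isolation.
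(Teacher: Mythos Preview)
The paper states this lemma without proof, presumably regarding it as a routine verification once the definitions are in place. Your approach---introducing an orthonormal basis of the tangent plane $P$, writing $F$ and $G$ for the coordinate matrices of $(\hat f_x,\hat f_y)$ and $(n_x,n_y)$, and then reducing everything to $2\times 2$ linear algebra---is exactly the sort of argument one would expect here, and all four computations you outline are correct. In particular your identification of the one genuine subtlety (that $\III = \II\,\I^{-1}\II$ requires $F^\top G = G^\top F$, i.e., the symmetry of $\II$ established in the preceding lemma) is on point and is precisely the place where the edge-constraint enters. The only cosmetic wrinkle is that the paper's mixed area form is defined with the \emph{unprojected} partials $f_x,f_y$, but since $N$ is a column of each determinant the $N$-components of $f_x,f_y$ drop out and one may freely replace them by $\hat f_x,\hat f_y$, as you tacitly do.
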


\begin{example}[Spherical edge-constraint nets]
Let $(f, f/r)$ be an edge-constraint net in the sphere of radius $r > 0$ as determined by Lemma \ref{lem:sphericalNets}. Then every quadrilateral has the expected Gau{\ss} ($\K = \frac{1}{r^2}$) and mean curvature ($\Hc = \frac{1}{r}$).
\end{example}

\begin{example} [Curvature line fields]
Figure \ref{fig:ellipsoidCurvatureLines} shows the curvature line fields of an ellipsoid in the smooth and discrete setting.
\begin{figure}[ht]
  \centering
  \includegraphics[width=\hsize]{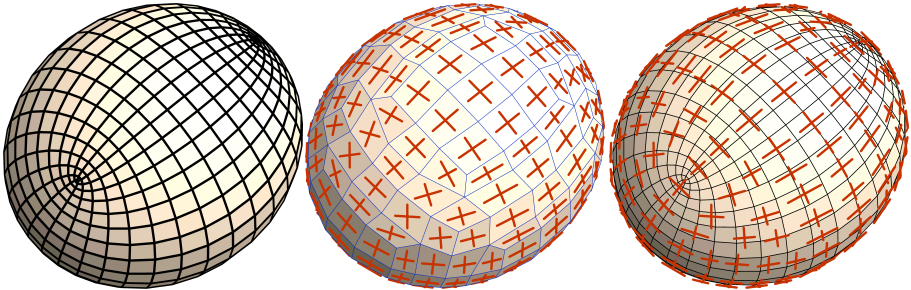}
  \caption{Left: Curvature lines of a smooth ellipsoid. Middle: Curvature line field of an ellipsoid edge-constraint net. Right: Overlay of smooth and discrete curvature line fields.}
  \label{fig:ellipsoidCurvatureLines}
\end{figure}
\end{example}

\section{Constant Curvature Nets}
\label{sec:constantCurvatureNets}
Edge-constraint nets and their curvature theory provide a unifying geometric framework through which to understand previously defined notions of discrete surfaces of constant curvature in special parametrizations. Due to their governing integrable structure, these surfaces naturally arise in one parameter \emph{associated families} that in the smooth setting fix the respective curvature, but change the type of parametrization. Previous notions of discrete curvature exist for each particular type of special parametrization, but have been difficult to reconcile with the corresponding (differently parametrized) associated families.

In what follows we rectify these discrepancies by showing that the algebraically constructed discrete isothermic minimal surfaces \cite{Bobenko:1996vq}; discrete isothermic constant mean curvature surfaces \cite{Bobenko:1996vq}; discrete asymptotic line constant negative Gau{\ss} curvature surfaces \cite{Bobenko:1996ug}; and discrete curvature line constant negative Gau{\ss} curvature surfaces \cite{Konopelchenko:1999te}, \emph{together with each of their respective associated families} are in fact edge-constraint nets with their respective curvatures constant. To close we introduce a theory of developable edge-constraint nets, a non-integrable example.

\subsection{Discrete minimal surfaces}
\label{sec:minimal}
\begin{figure}
  \centering
  \includegraphics[width=.45\hsize]{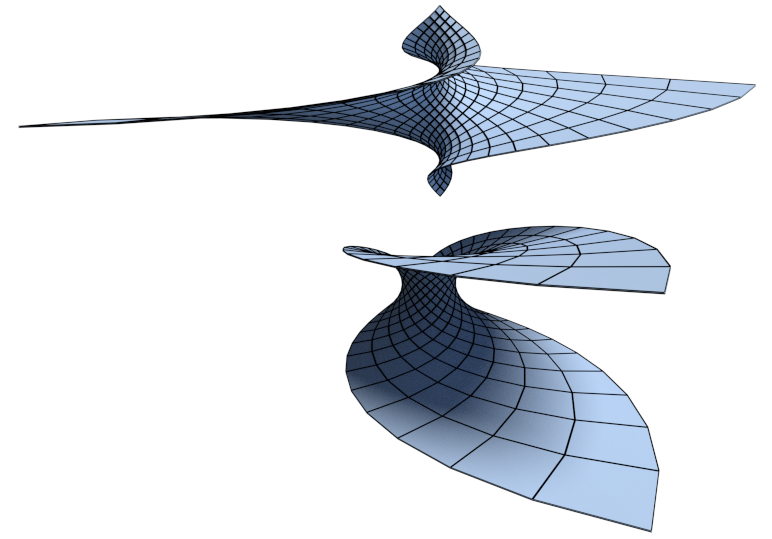}
  \includegraphics[width=.5\hsize]{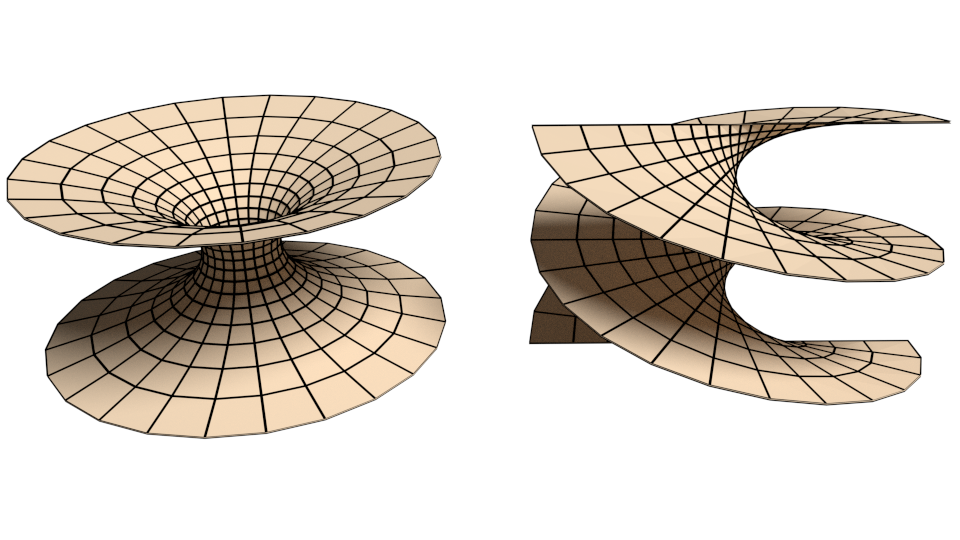}
  \caption{Two pairs of minimal edge-constraint nets from the same associated family generated from Weierstrass data. Left: Circular net helicoid and an A-net catenoid. Right: Circular net catenoid and A-net helicoid.}
  \label{fig:minAssocFamily}
\end{figure}

We start with the general definition.

\begin{definition}[Minimal edge-constraint net]
An edge-constraint net $(f,n)$ is called \emph{minimal} if every quad has vanishing mean curvature ($\Hc = 0$), i.e., the mixed area $A(f,n)$ vanishes.
\end{definition}

In the smooth setting, minimal surfaces are often parametrized by isothermic (curvature line and conformal) coordinates arising naturally form their construction from holomorphic Weierstrass data: Stereographically project a holomorphic function $g: \C \to \C$ onto the Riemann sphere to get a conformal map $n: \C \to \Stwo$. Now, think of $n$ as the Gau{\ss} map to a surface and construct the \emph{Christoffel dual} isothermic surface $f: \C \to \R^3$ by integrating
\begin{equation}
\label{eq:smoothDual}
f_x(x,y) = \frac{n_x(x,y)}{\|n_x(x,y)\|^2} \mathrm{~~and~~} f_y(x,y) = -\frac{n_y(x,y)}{\|n_y(x,y)\|^2}.
\end{equation}
The resulting $f$ is an isothermic parametrization of a minimal surface in $\R^3$ with Gau{\ss} map given by the conformal map $n$. This process of generating a minimal surface is called the Weierstrass representation. 

Bobenko and Pinkall defined discrete minimal surfaces as a special case of discrete isothermic surfaces and showed they exhibit a discrete Weierstrass representation \cite{Bobenko:1996vq}. These nets indeed have vanishing mean curvature in a curvature theory for nets with planar faces (that in the case of contact element nets is contained in the present theory) \cite{Schief20061484, Bobenko:2010eg}.

In complete analogy to the smooth case, one can extend this representation into an associated family.
This corresponds to locally rotating the frame, therefore changing the type of parametrization away from being curvature line (while staying conformal in the smooth setting). While this is an algebraic way to define the discrete nets of the associated family there has been no notion through which one can understand their minimality. The goal of this section is to rectify this by showing that every member of the associated family is an edge-constraint net and that its mean curvature vanishes on every quad.

Formulating the discrete Weierstrass representation requires discrete analogues of curvature line pa\-ra\-me\-tri\-zations, Christoffel duals, and isothermic pa\-ra\-me\-tri\-zations. We briefly introduce these notions, but emphasize that each of these discrete objects is interesting in its own right (see the book by Bobenko and Suris \cite{Bobenko:2008tn}).

\begin{definition} [Circular net]
A contact element net $(f,n)$ is called a \emph{circular net} or \emph{discrete curvature line net} if:\\
\indent 1. every quad of the immersion $f$ is \emph{circular}, its vertices lie on a circle; and\\
\indent 2. the Gau{\ss} map along each edge is found by reflection through the immersion edge perpendicular bisector plane, i.e., for $i = 1,2$ we have 
\begin{equation}
\label{eq:circSym}
n = n_i - 2 \frac{n_i \cdot (f_i - f)}{\|f_i - f\|^2} (f_i - f).
\end{equation}
\end{definition}
\begin{lemma}
Let $(f,n)$ be a circular net, then it is an edge-constraint net.
\end{lemma}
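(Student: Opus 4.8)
The plan is to show that the circular net condition directly implies the edge-constraint condition on each edge. Recall that the edge-constraint requires $f_i - f \perp \foh(n_i + n)$ for $i = 1,2$, which is equivalent to $(f_i - f) \cdot (n_i + n) = 0$. The circular net definition provides a reflection relation (Equation~(\ref{eq:circSym})) that expresses $n$ in terms of $n_i$ and the edge vector $f_i - f$, so the natural approach is to substitute this relation into the edge-constraint inner product and verify it vanishes.

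First I would fix an edge direction $i \in \{1,2\}$ and abbreviate the edge vector by $e := f_i - f$. Using Equation~(\ref{eq:circSym}), I would write $n = n_i - 2\frac{n_i \cdot e}{\|e\|^2}\,e$. Then I would compute $n_i + n = 2n_i - 2\frac{n_i \cdot e}{\|e\|^2}\,e$, and take the inner product with $e$. Carrying out the dot product gives $(n_i + n) \cdot e = 2(n_i \cdot e) - 2\frac{n_i \cdot e}{\|e\|^2}(e \cdot e) = 2(n_i \cdot e) - 2(n_i \cdot e) = 0$, which is exactly the edge-constraint. The key observation making this work is that the reflection in Equation~(\ref{eq:circSym}) is precisely reflection across the perpendicular bisector plane of the edge, so $n$ and $n_i$ are mirror images across that plane; their sum therefore lies in the plane and is automatically orthogonal to $e$.

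Since the argument is symmetric in the two edge directions and uses only the reflection relation (not the circularity of the quad itself), the computation handles both $i = 1$ and $i = 2$ identically, establishing the edge-constraint on every edge of the net. I would note that the circularity condition (part~1 of the circular net definition) is not actually needed for the edge-constraint to hold; only the Gau\ss{} map reflection relation (part~2) is used. The nonvanishing edge assumption is inherited from the circular net setup, so $(f,n)$ qualifies as a contact element net with no vanishing edges.

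I do not anticipate a genuine obstacle here, as the result reduces to a one-line algebraic cancellation once the reflection formula is substituted. The only point requiring minor care is confirming that the denominator $\|e\|^2$ is nonzero, which is guaranteed because $f$ has no vanishing edges; beyond that, the verification is a direct computation.
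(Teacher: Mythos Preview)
Your proof is correct and essentially the same as the paper's. The paper phrases the one-line argument geometrically---from Equation~(\ref{eq:circSym}) one has $f_i - f \parallel n_i - n$, and since $n_i, n \in \Stwo$ are unit vectors $(n_i - n) \perp (n_i + n)$---while you carry out the equivalent dot-product cancellation explicitly; the content is identical.
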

\begin{proof}
Equation (\ref{eq:circSym}) gives: $f_i-f \parallel n_i-n \perp n_i+n$.
\end{proof}
\noindent Note that the symmetry imposed by the second property implies that the Gau{\ss} map and all offset nets $(f + tn, n)$ are also circular nets, with corresponding quads lying in parallel planes.

\begin{definition}[Isothermic net]
\label{def:isothermicNet}
Let $(f,n)$ be a circular net. Then $(f,n)$ is a \emph{discrete isothermic net} if there exists a second circular net $(f^*, n)$  with the same Gau{\ss} map such that $A(f,f^*) = 0$. The net $(f^*,n)$ is unique (up to scaling and translation) and called the \emph{discrete Christoffel dual net of $(f, n)$}.
\end{definition}

We now state a few important properties of discrete isothermic nets that we will need \cite{Bobenko:2009tt}.

\begin{lemma}
Let $(f,n)$ be a discrete isothermic net with Christoffel dual $(f^*,n)$. Then the following hold:\\
\indent 1. There exists real values $\beta$ per edge that coincide for opposite edges on each quad and the cross-ratio of every quad factorizes, i.e.,
\begin{equation}
\frac{(f_1 - f)(f_{12} - f_2)}{(f_{12} - f_1)(f_2 - f)} = \frac{\beta_1}{\beta_2},
\end{equation}
with $\beta_1 = \beta_{f_1 - f}$ and $\beta_2 = \beta_{f_2 - f}$ associated to shifts in the first and second lattice directions, respectivly. \\
\indent 2. Corresponding edges of $f$ and $f^*$ are parallel and satisfy:
\begin{equation}
\label{eq:dualEdges}
f^*_i - f^* = \beta_i \frac{f_i - f}{\|f_i - f\|^2} ~\mathrm{for} ~ i=1,2,
\end{equation}
while non-corresponding diagonals are parallel and satisfy:
\begin{equation}
\label{eq:dualDiagonals}
f^*_{12} - f^* = (\beta_2 - \beta_1) \frac{f_2 - f_1}{\|f_2 - f_1\|^2} ~\mathrm{and}~ f^*_{2} - f^*_1 = (\beta_2 - \beta_1) \frac{f_{12} - f}{\|f_{12} - f\|^2}.
\end{equation}
\end{lemma}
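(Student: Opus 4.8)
The plan is to reduce everything to planar vector algebra in the common quad tangent plane and then show that, for a circular quad, the two requirements on the dual net --- that it close up and that it have vanishing mixed area with $f$ --- collapse to the single scalar identity expressing that the (real) cross-ratio factorizes. First I would establish the parallelism of corresponding edges: since $(f,n)$ and $(f^*,n)$ are both circular with the \emph{same} Gau{\ss} map, applying (\ref{eq:circSym}) to each net gives $f_i - f \parallel n_i - n \parallel f^*_i - f^*$. Hence corresponding edges are parallel, so on each edge there is a real scalar $\phi$ with $f^*_i - f^* = \phi\,(f_i - f)$; setting $\beta := \phi\,\|f_i-f\|^2$ rewrites this as $f^*_i - f^* = \beta\,(f_i-f)/\|f_i-f\|^2$, which is (\ref{eq:dualEdges}) once I prove $\beta$ agrees on opposite edges.

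Because circular quads are planar and their offsets and duals lie in parallel planes, both $f$ and $f^*$ sit in planes perpendicular to the common projection direction $N$; identifying this plane with $\C$, the mixed area form (\ref{eq:curvatureForms}) becomes the planar signed-area pairing $[u,v] = \mathrm{Im}(\bar u v)$. I would abbreviate the four directed edges as $a=f_1-f$, $b=f_{12}-f_1$, $c=f_{12}-f_2$, $d=f_2-f$, with closing relation $a+b=c+d$, and write the dual edges as $\phi_a a,\phi_b b,\phi_c c,\phi_d d$. Expanding $A(f,f^*)$ through the midpoint-connector partials of Definition \ref{def:partialsBigNCurvatures} turns the vanishing-mixed-area condition and the closing condition for $f^*$ into, respectively, one scalar equation and one vector equation in $(\phi_a,\phi_b,\phi_c,\phi_d)$.

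The heart of the argument is to show these constraints force $\phi_a\|a\|^2 = \phi_c\|c\|^2$ and $\phi_b\|b\|^2 = \phi_d\|d\|^2$, i.e. opposite-edge consistency of $\beta$. I would argue by a dimension count together with exhibiting one factorized solution. The Combescure scalars satisfying the closing condition form a two-parameter family, and imposing $A(f,f^*)=0$ cuts this down to one dimension --- which is exactly the asserted uniqueness of $f^*$ up to scaling. On the other hand, using $\bar z = \|z\|^2/z$ one checks directly that the factorized choice $\phi_e = \beta_{(\cdot)}/\|e\|^2$ with $\beta_1/\beta_2 = ac/(bd)$ satisfies \emph{both} the closing condition and $A(f,f^*)=0$ (each reduces to the identity $\beta_1/(ac)=\beta_2/(bd)$); here $ac/(bd)$ is precisely the cross-ratio, which is real because $f$ is concyclic, so real $\beta_1,\beta_2$ with this ratio exist. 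This yields a one-parameter family of genuine factorized duals lying inside the one-dimensional solution space, so the two coincide and the given $f^*$ is of factorized form. This simultaneously proves the opposite-edge consistency of $\beta$ and the factorization $\tfrac{(f_1-f)(f_{12}-f_2)}{(f_{12}-f_1)(f_2-f)} = \beta_1/\beta_2$ of part 1.

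Finally, the diagonal formulas (\ref{eq:dualDiagonals}) follow by summing dual edges: $f^*_{12}-f^* = \phi_a a + \phi_b b = \beta_1/\overline{(f_1-f)} + \beta_2/\overline{(f_{12}-f_1)}$, and a short manipulation using $a+b=c+d$ together with the just-established relation $\beta_1/\beta_2 = ac/(bd)$ rewrites this as $(\beta_2-\beta_1)(f_2-f_1)/\|f_2-f_1\|^2$, with the other diagonal handled symmetrically. I expect the main obstacle to be the third step: organizing the mixed-area expansion cleanly enough that the constraint collapses to a single cross-ratio identity, and justifying the dimension count, which tacitly assumes the quad is nondegenerate (the three constraints independent, $n_i-n\neq 0$, and $f_2-f_1\neq 0$). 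It is precisely here that concyclicity enters essentially, guaranteeing that the cross-ratio --- and hence the ratio $\beta_1/\beta_2$ --- is real.
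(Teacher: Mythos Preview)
The paper does not actually prove this lemma: it is stated as a collection of ``important properties of discrete isothermic nets that we will need'' and simply attributed to \cite{Bobenko:2009tt}. So there is no in-paper argument to compare against, and your proposal is a self-contained proof where the paper offers none.

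Your argument is sound. The edge-parallelism via (\ref{eq:circSym}) is immediate, and the core dimension count is correct and clean: on a single nondegenerate planar quad the closing condition on $(\phi_a,\phi_b,\phi_c,\phi_d)$ has rank two, leaving a two-dimensional linear space of Combescure scalings; $A(f,\cdot)$ is a linear functional on this space which is nonzero at the identity scaling (since $A(f,f)\neq 0$), so its kernel is one-dimensional. Your factorized ansatz, once one uses $\bar z=\lvert z\rvert^2/z$ and the concyclicity (reality of $ac/(bd)$), does satisfy both the closing relation and $A(f,f^*)=0$; each computation indeed collapses to $\beta_1\,\bar b\bar d=\beta_2\,\bar a\bar c$, which is the conjugate of the real cross-ratio identity. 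Hence the factorized line coincides with the kernel, forcing the opposite-edge consistency of $\beta$ and the cross-ratio factorization simultaneously. The diagonal identities then follow by the edge-sum you indicate. The nondegeneracy caveats you flag ($n_i\neq n$ so that parallelism is informative, $A(f,f)\neq 0$, and the quad not collinear) are exactly the ones needed, and they are implicit throughout the paper as well.
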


For the rest of this section we restrict the discussion to the special case of discrete isothermic nets whose immersion quads have cross-ratio minus one, in particular, we will assume that $\beta_1 = 1$ and $\beta_2 = -1$ for every quad. The reason for doing this is that if we think of the cross-ratio as a discrete analog of $f_x^2/f_y^2$, then cross-ratio minus one corresponds to $f_x^2 = - f_y^2$, the defining property of conformal maps; the more general notion of factorizing cross-ratio allows for reparametrizations of the parameter lines.

It is essential to emphasize that the restriction to cross-ratio minus one solely serves the purpose of simplifying the algebra. Every result that follows also holds with the more general definition, with the pre-factors $\beta_1$ and $\beta_2$ cropping up in the expected places.

The notion of a discrete holomorphic function just restricts the notion of discrete isothermicity to the plane.

\begin{definition}
Fix a quad graph $\G$. The complex function $g: \G \to \C$ is called a \emph{discrete holomorphic function} if every quad has cross-ratio minus one.
\end{definition}

We can now define the discrete Weierstrass representation by following the same procedure as in the smooth case.

\begin{definition}[Weierstrass representation of discrete isothermic minimal nets]
\label{def:discreteWeierstrass}
Let $g: \Z^2 \to \C$ be a discrete holomorphic function and consider the discrete isothermic net $(f,n)$ where:\\
\indent 1. the Gau{\ss} map $n$ is given by the stereographic projection of the holomorphic data $g$, i.e.,
\begin{eqnarray}
n &:=& \frac1{1+ |g|^2}(g + \bar g, \frac1i (g - \bar g), |g|^2 -1), \mathrm{~so~in~particular~}\\
\nonumber
n_i &:=& \frac1{1+ |g_i|^2}(g_i + \bar g_i, \frac1i (g_i - \bar g_i), |g_i|^2 -1) \mathrm{~for~} i = 1,2;
\end{eqnarray}\\
\indent 2. and $f$ is given (up to translations) as the discrete isothermic dual immersion of $n$, i.e., for a shift in either direction
\begin{eqnarray}
\nonumber
  &f_1 -f := \frac{n_1-n}{\|n_1 - n\|^2} = \Re\left( \frac1{2(g_1 - g)} (1 - g_1 g, i(1+g_1 g), g_1 + g)\right) \mathrm{and} & \\
  &f_2 -f := -\frac{n_2-n}{\|n_2 - n\|^2} = -\Re\left( \frac1{2(g_2 - g)} (1 - g_2 g, i(1+g_2 g), g_2 + g)\right). &
\end{eqnarray}
We call the arising net a \emph{discrete isothermic minimal net}.
\end{definition}

\begin{lemma}
Let $(f,n)$ be a discrete isothermic minimal net. Then it is a minimal edge-constraint net.
\begin{proof}
Discrete isothermic nets are circular nets, so they are edge-constraint nets and by construction $(f,n)$ is the Christoffel dual of $(n,n)$, so $A(f,n) = 0$.
\end{proof}
\end{lemma}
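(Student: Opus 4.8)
The plan is to split the statement into its two independent assertions and handle them in turn: first that $(f,n)$ is an edge-constraint net, and second that its mean curvature vanishes on every quad, i.e.\ that $A(f,n)=0$.

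For the edge-constraint property I would show that $(f,n)$ is in fact a circular net and then invoke the earlier observation that every circular net is an edge-constraint net. To establish circularity, I would first note that $n$ is the stereographic image of the discrete holomorphic function $g$; since stereographic projection is a M\"obius transformation and cross-ratios are M\"obius invariant, the cross-ratio $-1$ of every quad of $g$ passes to every quad of $n$, and a real cross-ratio forces the four vertices of each quad to be concircular. The immersion $f$ is then produced from $n$ by the edge formulas of Definition \ref{def:discreteWeierstrass}, which are precisely the Christoffel-dual edge equations \eqref{eq:dualEdges}; hence $(f,n)$ is the Christoffel dual circular net in the sense of Definition \ref{def:isothermicNet}, and in particular circular.

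The heart of the matter is the vanishing of $A(f,n)$, and here the key idea is to reinterpret $f$ as the Christoffel dual of the \emph{spherical} net $(n,n)$ rather than of $f$ itself. Concretely, $n:\G\to\Stwo$ together with the Gau\ss\ map $n$ is an edge-constraint net by Lemma \ref{lem:sphericalNets} (the case $r=1$), and the circularity of the quads of $n$ established above, combined with the fact that the reflection condition \eqref{eq:circSym} is automatic for a net of unit vectors (since $n_i\cdot(n_i-n)=\foh\|n_i-n\|^2$ when $\|n\|=\|n_i\|=1$), shows that $(n,n)$ is a circular net. The Weierstrass edge formulas of Definition \ref{def:discreteWeierstrass} are exactly equations \eqref{eq:dualEdges} for dualizing $(n,n)$ with $\beta_1=1$ and $\beta_2=-1$, so $f$ is the Christoffel dual of $(n,n)$; by Definition \ref{def:isothermicNet} this means $A(n,f)=0$. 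Because the mixed area form \eqref{eq:curvatureForms} is manifestly symmetric in its two arguments, $A(f,n)=A(n,f)=0$, and therefore $\Hc=A(f,n)/A(f,f)=0$ by \eqref{eq:meanGaussCurvature}, as desired.

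The main obstacle I anticipate is the bookkeeping needed to justify applying the Christoffel-dual machinery to $(n,n)$: one must confirm that the spherical Gau\ss-map net genuinely qualifies as a circular (hence isothermic) net---checking both that the cross-ratio $-1$ and concircularity survive stereographic projection and that the projection direction and tangent plane used to evaluate $A(n,f)$ coincide with those used for $A(f,n)$ (both are determined by $n_x,n_y$, so this is consistent). Once $(n,n)$ is certified as circular and $f$ is matched to \eqref{eq:dualEdges}, the symmetry of the mixed area form makes the conclusion immediate, so the real content lies entirely in identifying the correct dual net.
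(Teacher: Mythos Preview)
Your proposal is correct and follows essentially the same route as the paper's proof: both hinge on the facts that circular nets are edge-constraint nets and that, by construction, $f$ is the Christoffel dual of the spherical net $(n,n)$, whence $A(f,n)=0$. You simply unpack what the paper leaves implicit---verifying that $(n,n)$ is genuinely circular (via M\"obius invariance of the cross-ratio and the automatic reflection condition for unit vectors) and noting the symmetry of the mixed area form---so your argument is a more detailed version of the same idea rather than a different approach.
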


In the smooth setting the Weierstrass representation provides a way to compute the Gau{\ss} curvature explicitly and for isothermic surfaces is given by $\K = \frac{-4 |g'|^4}{(1+|g|^2)^4}$, see~\cite{Oprea:2000wr}.
This has a discrete analogue:
\begin{lemma}
Let $(f,n)$ be a discrete isothermic minimal net arising from a discrete holomorphic function $g: \Z^2 \to \C$. Then the Gau{\ss} curvature of a quad is given in terms of $g$ by:
\begin{equation}
\K = \frac{-4 (| g_{12} - g | | g_2 - g_1 |)^2}{(1 + |g|^2)(1+|g_1|^2)(1+|g_{12}|^2)(1+|g_2|^2)}.
\end{equation}
\begin{proof}
Again we use the diagonals as the discrete partial derivatives. By Equation (\ref{eq:dualDiagonals}) we have $A(f,f) = - \frac{4}{\| n_x \|^2 \| n_y \|^2} A(n,n)$. Recall that the chordal distance between the stereographic projection of two points $w$ and $z$ in $\C$ is 
\begin{equation}
\frac{2}{\sqrt{(1 + |z|^2)(1 + |w|^2)}} |z - w |.
\end{equation}
Applying this to $ \| n_x \|^2$ and $\| n_y \|^2$ explicitly recovers the result.
\end{proof}
\end{lemma}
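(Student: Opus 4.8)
The plan is to reduce the claim to the single ratio $\K = A(n,n)/A(f,f)$ and to exploit the fact that, in the Weierstrass construction, $f$ is the Christoffel dual of the spherical net $(n,n)$. First I would invoke the reparametrization freedom noted in the remark on the choice of partial derivatives, and take the \emph{diagonals} rather than the midpoint connectors as the discrete partials: set $n_x := n_{12}-n$, $n_y := n_2-n_1$ for the Gau{\ss} map, and $f_x := f_{12}-f$, $f_y := f_2-f_1$ for the immersion. Since this is one common linear combination of the midpoint connectors used for both $f$ and $n$, the curvature $\K$ is unchanged, while the algebra simplifies because the dual net is most cleanly described on diagonals.

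Next I would apply the dual-diagonal identity (Equation \ref{eq:dualDiagonals}) with the roles of the net and its dual reversed---here $n$ is the primal isothermic net and $f=n^{*}$---and with $\beta_1=1$, $\beta_2=-1$, so that $\beta_2-\beta_1=-2$. This gives $f_x = f_{12}-f = -2\,n_y/\|n_y\|^2$ and $f_y = f_2-f_1 = -2\,n_x/\|n_x\|^2$. In particular $f_x \parallel n_y$ and $f_y \parallel n_x$, so the immersion and the Gau{\ss} map span the same tangent plane and share the projection direction $N$ (Definition \ref{def:partialsBigNCurvatures}); the two mixed areas are therefore taken with respect to the same $N$ and their ratio is meaningful.

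Substituting into $A(f,f)=\det(f_x,f_y,N)$ and pulling out the scalar factors yields $A(f,f) = \tfrac{4}{\|n_x\|^2\|n_y\|^2}\det(n_y,n_x,N) = -\tfrac{4}{\|n_x\|^2\|n_y\|^2}A(n,n)$, the sign arising from swapping the two columns of the determinant. This is exactly the relation quoted in the statement, and it immediately gives $\K = A(n,n)/A(f,f) = -\tfrac{1}{4}\|n_x\|^2\|n_y\|^2$, expressing the Gau{\ss} curvature purely through the diagonal edge lengths of the Gau{\ss} map.

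The final step is to convert $\|n_x\|=\|n_{12}-n\|$ and $\|n_y\|=\|n_2-n_1\|$ into chordal distances. As $n$ is the stereographic image of $g$, the identity $\|\,\mathrm{st}(z)-\mathrm{st}(w)\,\| = 2|z-w|/\sqrt{(1+|z|^2)(1+|w|^2)}$ gives $\|n_x\|^2 = 4|g_{12}-g|^2/[(1+|g_{12}|^2)(1+|g|^2)]$ together with the analogous expression for $\|n_y\|^2$; multiplying these and inserting the prefactor $-\tfrac{1}{4}$ recovers the claimed formula. The only real subtlety is bookkeeping: getting the determinant swap sign correct and being careful that the dual-diagonal identity is applied with $n$ (not $f$) as the primal net, after which the chordal-distance substitution is a routine computation.
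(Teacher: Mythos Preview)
Your proposal is correct and follows essentially the same approach as the paper: choose diagonals as the partial derivatives, use the dual-diagonal relation (Equation~\ref{eq:dualDiagonals}) with $n$ as the primal net and $f=n^*$ to get $A(f,f)=-\tfrac{4}{\|n_x\|^2\|n_y\|^2}A(n,n)$, and then substitute the chordal-distance formula for stereographic projection. You are simply more explicit than the paper about the sign from the determinant swap and about why $f$ and $n$ share the projection direction $N$, which is helpful but not a different method.
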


The discrete Weierstrass representation naturally gives rise to an associated family. We think of the discrete Weierstrass representation in terms of the corresponding \emph{discrete complex Weierstrass vectors}
\begin{equation}
\omega_i = \frac1{2(g_i - g)} (1 - g_i g, i(1+g_i g), g_i + g)
\end{equation}
for each lattice direction $i = 1, 2$ so that the immersion edges can be concisely written: $f_1 - f = \Re \omega_1$ and $f_2 - f = - \Re \omega_2$.

\begin{definition}[Associated family of discrete isothermic minimal net]
\label{def:minimalAssocFamily}
Let $g: \Z^2 \to \C$ be a discrete holomorphic function, $n: \Z^2 \to \Stwo$ be the stereographic projection of $g$, $\omega_i$ be the discrete complex Weierstrass vectors of $g$, and $\lambda = e^{i \alpha}$ for $\alpha \in [0, 2\pi]$. Then the family of contact element nets $(f^\alpha, n)$ given by:
\begin{equation}
f_1^\alpha -f^\alpha := \Re (\lambda \omega_1) \mathrm{~and~}  f_2^\alpha -f^\alpha := -\Re (\lambda \omega_2) 
\end{equation}
is called the \emph{associated family} of the discrete isothermic minimal net $(f^0, n)$ and $\lambda$ is called the \emph{spectral parameter}.
\end{definition}

\begin{lemma} Let $(f^0, n)$ be a discrete isothermic minimal net. Any member $(f^\alpha, n)$ of its associated family is an edge-constraint net as well.
\begin{proof}
By formally extending the dot product to complex 3-vectors\footnote{By the \emph{complex formal dot product} we mean $(z_1, z_2, z_3) \cdot (w_1, w_2, w_3) := \sum_{i=1}^3 z_i w_i$; there is no conjugation in the second component.} and noting that the Gau{\ss} map $n \in \R^3$ is real valued we have for $i = 1,2$ that $\Re(\lambda \omega_i) \cdot n = \Re (\lambda \omega_i \cdot n)$ and $\Re(\lambda \omega_i) \cdot n_i = \Re (\lambda \omega_i \cdot n_i)$. We easily compute $\Re (\lambda \omega_1 \cdot n) = -\Re (\frac\lambda2)$ and $\Re (\lambda \omega_1 \cdot n_1) = \Re (\frac\lambda2)$. Similarly, $\Re (\lambda \omega_2 \cdot n) = \Re (\frac\lambda2)$ and $\Re (\lambda \omega_2 \cdot n_2) = -\Re (\frac\lambda2)$. So $(f^\alpha_i - f^\alpha) \cdot n = - (f^\alpha_i - f\alpha) \cdot n_i$.
\end{proof}
\end{lemma}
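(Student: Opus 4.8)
The plan is to verify the edge-constraint directly for $(f^\alpha, n)$, namely that $f^\alpha_i - f^\alpha \perp \foh(n_i + n)$ for $i = 1,2$, or equivalently that $(f^\alpha_i - f^\alpha) \cdot n + (f^\alpha_i - f^\alpha) \cdot n_i = 0$. The crucial structural observation is that the immersion edges of the associated family are real parts of the complex Weierstrass vectors scaled by the spectral parameter $\lambda = e^{i\alpha}$, while the Gau{\ss} map values $n$ and $n_i$ are genuine real vectors. This asymmetry suggests pushing the edge-constraint through the complex formal dot product.

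First I would use the elementary identity $\Re(w) \cdot v = \Re(w \cdot v)$, valid for any complex $3$-vector $w$ and real $3$-vector $v$ under the complex formal dot product (no conjugation in the second slot). Since $f^\alpha_1 - f^\alpha = \Re(\lambda \omega_1)$ and $n, n_1$ are real, this lets me write $(f^\alpha_1 - f^\alpha) \cdot (n + n_1) = \Re\big(\lambda\,(\omega_1 \cdot n + \omega_1 \cdot n_1)\big)$, and similarly for the second lattice direction. The entire edge-constraint therefore reduces to understanding the four scalar quantities $\omega_i \cdot n$ and $\omega_i \cdot n_i$.

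The heart of the argument is then to compute these complex dot products explicitly by substituting the stereographic formula for $n$ (respectively $n_i$) together with the definition of $\omega_i$. I expect each to collapse to a $g$-independent constant; concretely I anticipate $\omega_1 \cdot n = -\foh$ and $\omega_1 \cdot n_1 = \foh$, with the signs reversed in the second direction, $\omega_2 \cdot n = \foh$ and $\omega_2 \cdot n_2 = -\foh$. The algebra is the main obstacle: one must expand the numerator
\[
(1 - g_1 g)(g + \bar g) + (1 + g_1 g)(g - \bar g) + (g_1 + g)(|g|^2 - 1)
\]
(here the second component's factors $i$ and $1/i$ from $\omega_1$ and $n$ cancel) and hope the cross terms organize cleanly. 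The key simplification I would look for is that this numerator factors as $(g - g_1)(1 + |g|^2)$, so that the $(1 + |g|^2)$ cancels the stereographic normalization and the $(g - g_1)$ cancels against the $(g_1 - g)$ in the denominator of $\omega_1$, leaving exactly $-\foh$; the computation for $\omega_1 \cdot n_1$ runs identically with $g_1$ in place of $g$ in the stereographic data.

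Granting these computations, the conclusion is immediate and, pleasingly, uniform in $\alpha$: for direction $1$ the two dot products sum to $\omega_1 \cdot n + \omega_1 \cdot n_1 = -\foh + \foh = 0$, so $(f^\alpha_1 - f^\alpha) \cdot (n + n_1) = \Re(\lambda \cdot 0) = 0$, and the analogous cancellation holds for direction $2$. Because the spectral parameter $\lambda$ multiplies a quantity that already vanishes, it never interacts with the cancellation, and the edge-constraint holds for every member of the associated family simultaneously, not merely the $\alpha = 0$ net.
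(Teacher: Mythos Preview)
Your proposal is correct and follows essentially the same approach as the paper's proof: reduce the edge-constraint via the identity $\Re(w)\cdot v = \Re(w\cdot v)$ for real $v$ to the complex dot products $\omega_i\cdot n$ and $\omega_i\cdot n_i$, then compute these as $g$-independent constants $\pm\foh$ that cancel. Your explicit numerator factorization $(g-g_1)(1+|g|^2)$ is precisely the content of the paper's ``we easily compute.''
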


The previous lemma provides us with a way to interpret the rotation generated by the multiplication of $\lambda$ directly in $\R^3$: 

\begin{figure}[ht]
  \centering
  \includegraphics[width=0.6\linewidth]{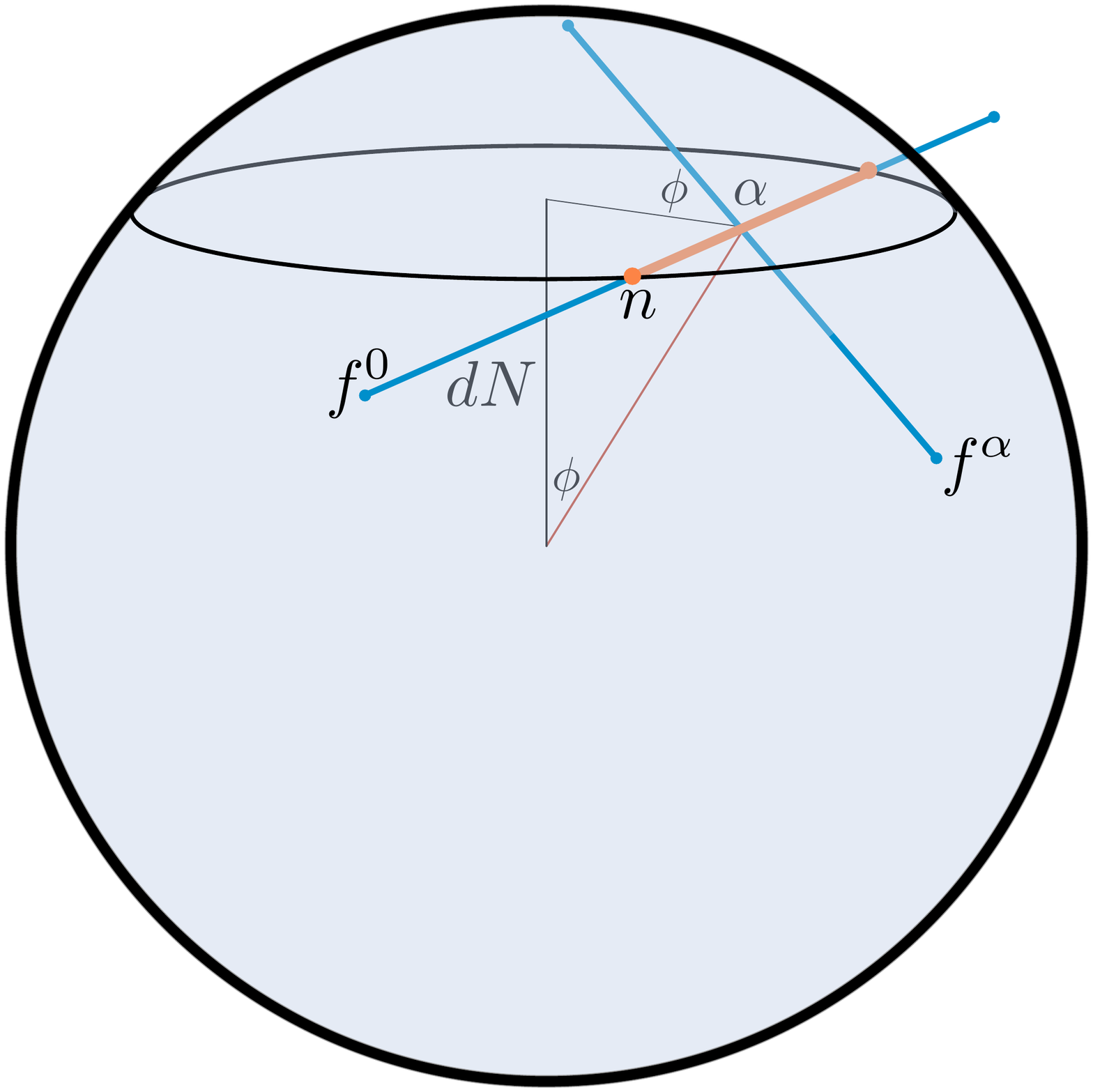}
  \caption{The geometry of an edge in the associated family (Equation (\ref{eq:geometricAssocFamily})) along with its original immersion edge (that is parallel to its Gau{\ss} map edge); the circle is the circumcircle of an adjacent Gau{\ss} map quadrilateral whose properties are needed to understand the geometry of immersion quads in the associated family (Lemma \ref{lem:projQuads}).}
  \label{fig:minAssocFamilyEdge}
\end{figure}

\begin{lemma}
Let $(f^\alpha, n)$ be the associated family of a discrete isothermic minimal net $(f^0, n)$. Then for each $\alpha \in [0,2\pi]$ and $i = 1,2$ we have:
\begin{equation}\label{eq:geometricAssocFamily}
  f^\alpha_i - f^\alpha = (-1)^{(i-1)} \| f^0_i - f^0 \|^2 (\cos \alpha (n_i - n) - \sin \alpha (n_i \times n)).
\end{equation}
\textup{In other words $f^\alpha_i - f^\alpha$ is given by $f^0_i - f^0$ rotated in the plane perpendicular to $(n_i + n)$ by angle $\alpha$, as shown in Figure \ref{fig:minAssocFamilyEdge}.}
\end{lemma}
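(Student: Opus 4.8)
The plan is to substitute $\lambda = \cos\alpha + i\sin\alpha$ into the defining relation $f^\alpha_i - f^\alpha = (-1)^{i-1}\Re(\lambda\,\omega_i)$ and to split the Weierstrass vector into its real and imaginary parts, $\omega_i = \Re\omega_i + i\,\Im\omega_i$, both of which are vectors in $\R^3$. Since $\Re(\lambda\,\omega_i) = \cos\alpha\,\Re\omega_i - \sin\alpha\,\Im\omega_i$, matching against Equation (\ref{eq:geometricAssocFamily}) term by term reduces the whole lemma to the two vector identities $\Re\omega_i = \|f^0_i - f^0\|^2\,(n_i - n)$ and $\Im\omega_i = \|f^0_i - f^0\|^2\,(n_i\times n)$; the common prefactor $(-1)^{i-1}$ and the trigonometric coefficients then fall out automatically.

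The first identity is essentially built into the construction. By Definition \ref{def:discreteWeierstrass} one has $\Re\omega_i = (n_i-n)/\|n_i-n\|^2$ for $i=1,2$, so at $\alpha=0$ the edge $f^0_i - f^0 = (-1)^{i-1}\Re\omega_i$ has squared length $\|f^0_i-f^0\|^2 = 1/\|n_i-n\|^2$, and multiplying $n_i-n$ by this scalar returns exactly $\Re\omega_i$. Thus the only genuine content is the identity for $\Im\omega_i$, which I would establish by separately pinning down its direction, its magnitude, and its sign.

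For the direction I would reuse the computation in the proof that $(f^\alpha,n)$ is an edge-constraint net: there one finds, for every $\lambda=e^{i\alpha}$, that $\Re(\lambda\,\omega_i\cdot n)$ and $\Re(\lambda\,\omega_i\cdot n_i)$ equal $\pm\Re(\lambda/2)$, which forces the (formal complex) dot products $\omega_i\cdot n$ and $\omega_i\cdot n_i$ to be the real numbers $\pm\foh$. Since $n$ and $n_i$ are real, taking imaginary parts gives $\Im\omega_i\cdot n = \Im\omega_i\cdot n_i = 0$, so $\Im\omega_i$ is orthogonal to both $n$ and $n_i$ and hence parallel to $n_i\times n$ (here $n_i\neq\pm n$ by the non-vanishing edge assumption). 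For the magnitude I would use the null-type identity $\omega_i\cdot\omega_i = 1/4$: a direct expansion of the Weierstrass vector shows the bracket collapses to $(g_i-g)^2$, and writing $\omega_i = \Re\omega_i + i\,\Im\omega_i$ this says $\|\Re\omega_i\|^2 - \|\Im\omega_i\|^2 = 1/4$ together with $\Re\omega_i\cdot\Im\omega_i = 0$ (the latter being automatic once the direction is known, since $(n_i-n)\perp(n_i\times n)$). Combining $\|\Re\omega_i\|^2 = 1/\|n_i-n\|^2$ with the spherical identities $\|n_i-n\|^2 = 2(1-n_i\cdot n)$ and $\|n_i\times n\|^2 = 1-(n_i\cdot n)^2$ gives $\|\Im\omega_i\|^2 = \|n_i\times n\|^2/\|n_i-n\|^4$, which is precisely the squared length of the target vector $\|f^0_i-f^0\|^2(n_i\times n)$.

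At this stage one has $\Im\omega_i = \pm\|f^0_i-f^0\|^2(n_i\times n)$, and the remaining obstacle -- the only genuinely delicate point, since the direction and magnitude arguments are blind to orientation -- is fixing the sign. I would settle this by a short direct check: either compute the scalar $\Im\omega_i\cdot(n_i\times n)$ from the explicit coordinates of $\omega_i$ and $n$ and read off its sign, or verify the identity in one convenient configuration (say $g=0$ with $g_i$ real) and extend by continuity, observing that $\Im\omega_i$ and $n_i\times n$ are continuous and nonvanishing away from $n_i=\pm n$ and so cannot change relative sign. Once the sign is fixed, the second identity holds; the two identities together give Equation (\ref{eq:geometricAssocFamily}), and the geometric rephrasing follows because $\{\,n_i-n,\ n_i\times n\,\}$ is an orthogonal basis of the plane perpendicular to $n_i+n$.
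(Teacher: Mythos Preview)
Your approach is essentially the same as the paper's: both split $\Re(\lambda\omega_i)=\cos\alpha\,\Re\omega_i-\sin\alpha\,\Im\omega_i$, identify $\Re\omega_i$ via the dual-edge relation, and then pin down $\Im\omega_i$ by showing it is parallel to $n_i\times n$ and computing its norm from $\omega_i\cdot\omega_i=1/4$. The only minor difference is how the direction is established: the paper argues $\Im\omega_i\perp(n_i-n)$ (from $\Re\omega_i\cdot\Im\omega_i=0$) and $\Im\omega_i\perp(n_i+n)$ (from the edge-constraint), whereas you argue $\Im\omega_i\perp n$ and $\Im\omega_i\perp n_i$ directly from the preceding lemma; these are of course equivalent. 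You are in fact more careful than the paper on one point: the paper's proof leaves the overall sign of $\Im\omega_i$ implicit, while you flag the ambiguity and propose a direct or continuity-based resolution.
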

\begin{proof}
We show the result for the first lattice direction $i = 1$, the other lattice direction follows similarly.

For every $\alpha \in [0, 2\pi]$ the immersion edge $f^\alpha_1 - f^\alpha$ is perpendicular to $(n_1 + n)/2$, so it is a linear combination of $(n_1 - n)$ and $(n_1 \times n)$. Furthermore, in terms of the complex discrete Weierstass vector, this immersion edge is the linear combination $\Re(\lambda \omega_1) = \cos\alpha \Re \omega_1 - \sin\alpha \Im \omega_1$. Since $(f^0,n)$ and $(n,n)$ are dual discrete isothermic nets we immediately have $\Re \omega_1 = \| f^0_1 - f^0\| ^2 (n_1 - n)$, so we only have to show $ \Im \omega_1  = \| f^0_1 - f^0\| ^2 (n_1 \times n)$.

A simple computation yields that the formal complex 3-vector dot product $\omega_1 \cdot \omega_1$ is real and equal to $1/4$. In particular, this implies that the real dot product $\Re \omega_1 \cdot \Im \omega_1 = 0$ and that $\| \Im \omega_1 \| ^2 = \| \Re \omega_1 \|^2 - 1/4$. Hence $\Im \omega_1$ is parallel to $n_1 \times n$ (since it is perpendicular to both $(n_1 - n)$ and $(n_1 + n)$). Now, since the Gau{\ss} map vectors are unit length we have $\|n_1 \times n \|^2 = \| \frac{n_1 + n}{2} \|^2 \| n_1 - n \|^2$, which we use to conclude
\begin{equation}
\| \Im \omega_1 \| ^2 = \frac{1}{\| n_1 - n \|^2} - 1/4 = \frac{\| n_1 + n \|^2}{4 \| n_1 - n \|^2} = ( \| f^0_1 - f^0 \|^2 \| n_1 \times n \| ) ^2.
\end{equation}
\end{proof}

\begin{figure}[ht]
  \centering
  \includegraphics[width=0.35\hsize]{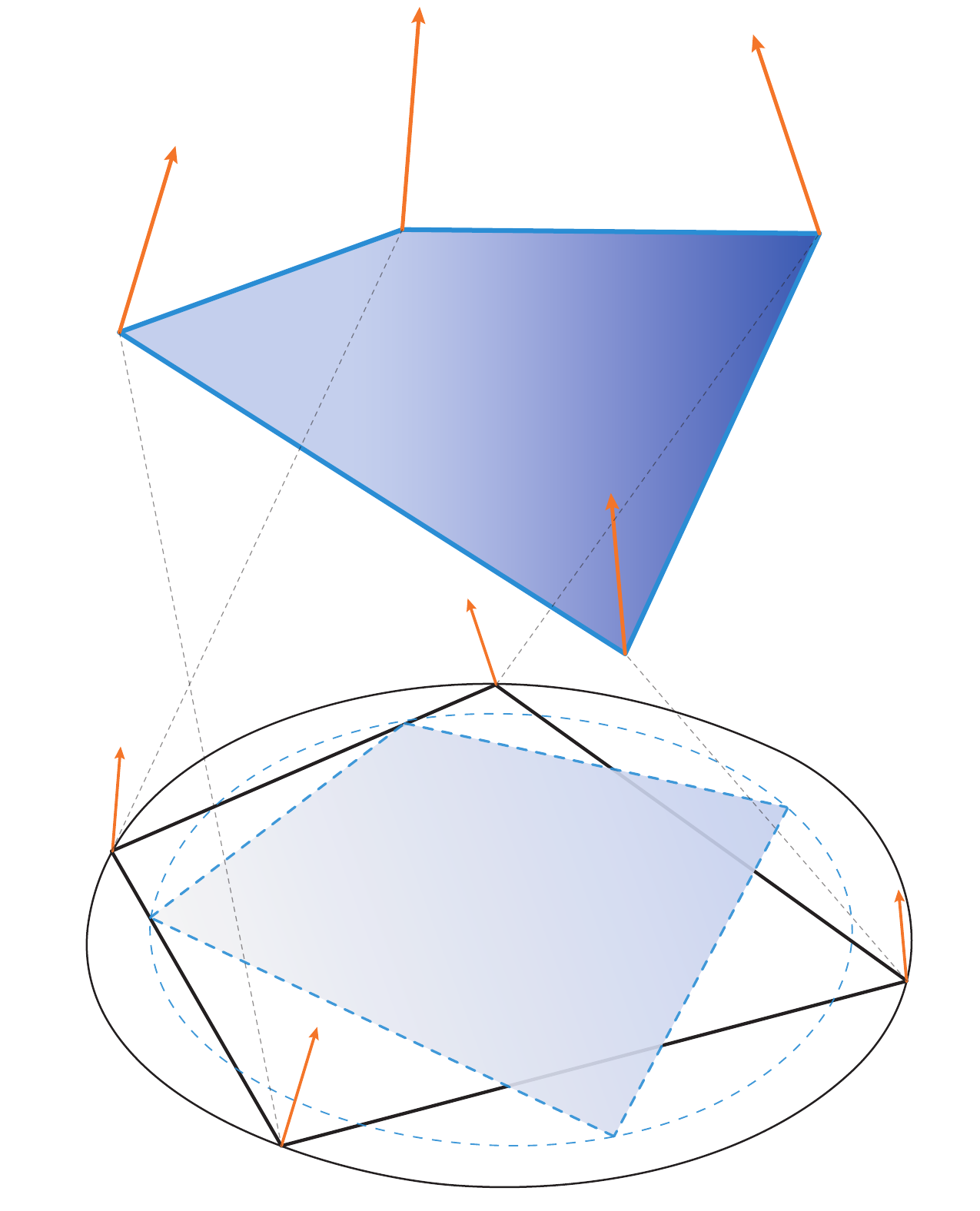}
  \caption{A non-planar quad (blue) from the associated family of a discrete isothermic minimal net and its projection, which is a rotated and scaled version of the original circular quad (black). Note that the Gau{\ss} map is constant.}
  \label{fig:projectedAssocMinimalQuad}
\end{figure}

This more geometric construction of the associated family highlights the following important relationship (shown in Figure \ref{fig:projectedAssocMinimalQuad}), which will lead to vanishing mean curvature.

\begin{lemma}[Quad geometry of the associated family]
\label{lem:projQuads}
Every immersion quad of a member of the associated family $(f^\alpha, n)$ when projected into its corresponding Gau{\ss} map plane is a scaled and rotated version of its corresponding circular immersion quad of $(f^0, n)$.
\begin{proof}
We show that an immersion edge of $(f^\alpha, n)$ when projected into one of its neighboring Gau{\ss} map planes is a scaled and rotated version of its corresponding immersion edge of $(f^0,n)$; the setup is given in Figure \ref{fig:minAssocFamilyEdge}. In particular, the scaling factor and rotation angle are independent of the lattice direction, so the result extends to the quads, proving the lemma.

What follows is identical for both lattice directions, so we work with the first one. The Gau{\ss} map quad is circular so the projection direction $N$ anchored at the origin passes through its circumcenter (at height $d$) and is normal to its plane; let $\pi$ be the projection into this plane. Furthermore, $N \perp (n_1 - n)$ so from Equation (\ref{eq:geometricAssocFamily}) we see that
\begin{equation}
\pi(f^\alpha_1 - f^\alpha) = (\| f^0_1 - f^0 \|)^2 (\cos \alpha (n_1 - n) - \sin\alpha \cos \phi (n_1 \times n)),
\end{equation}
where $\phi$ is the angle between $f^\alpha_1 - f^\alpha$ and the Gau{\ss} map plane. Observe that the angle between $N$ and $\frac{n_1 + n}{2}$ is also $\phi$, so we can calculate:
\begin{eqnarray}
\| \pi(f^\alpha_1 - f^\alpha) \|^2 &=& (\| f^0_1 - f^0 \| )^4 (\cos^2\alpha (l)^2 + \sin^2\alpha \cos^2\phi \|n_1 \times n \|^2) \nonumber \\
&=& (\| f^0_1 - f^0 \| )^2 ( \cos^2\alpha + \sin^2\alpha \cos^2 \phi \|\frac{n_1 + n}{2}\|^2). \nonumber \\
&=& (\| f^0_1 - f^0 \| )^2 (\cos^2\alpha + \sin^2\alpha~ d^2).
\end{eqnarray}
Therefore, the projected edge length is the original edge length scaled by a factor $\varsigma$ that is independent of the lattice direction and rotated by angle $\theta$, i.e.,
\begin{equation}
\varsigma = \sqrt{\cos^2\alpha + \sin^2\alpha ~ d^2} ~~ \mathrm{and} ~~ \cos\theta = \frac{\cos\alpha}{\sqrt{\cos^2\alpha + \sin^2\alpha~d^2}}.
\end{equation}
\end{proof}
\end{lemma}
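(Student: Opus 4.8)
The plan is to reduce the statement about quads to a statement about individual edges and then argue that the induced planar map is a genuine similarity, i.e.\ that the scaling factor and rotation angle are the same for every edge. First I would fix a single immersion quad $(f^\alpha, f^\alpha_1, f^\alpha_{12}, f^\alpha_2)$ together with its Gau{\ss} map quad $(n, n_1, n_{12}, n_2)$ and the projection $\pi$ onto the face tangent plane $P \perp N$. Since a common scaling together with a common rotation applied edgewise is exactly what realizes a similarity of a quad, it suffices to prove two things: (i) each projected edge $\pi(f^\alpha_i - f^\alpha)$ is the corresponding edge $f^0_i - f^0$ scaled by some factor $\varsigma_i$ and rotated by some angle $\theta_i$ inside $P$; and (ii) $\varsigma_1 = \varsigma_2$ and $\theta_1 = \theta_2$. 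Part (ii) is the real content of the lemma, and I expect it to be the main obstacle.

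Next I would set up the projection geometry. Because $(f^0,n)$ is circular, its Gau{\ss} map quad and all offset quads are circular, so the four points $n, n_1, n_{12}, n_2$ lie on a common circle contained in the plane $P$. As these are unit vectors, the origin projects orthogonally onto the circumcenter; hence the circle sits at a fixed signed height $d = n \cdot N = n_1 \cdot N = \cdots$ above $P$, and this single number $d$ is shared by all four Gau{\ss} map vertices. This is the geometric fact that will ultimately force direction-independence in step (ii).

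Then I would project one edge explicitly. Working in the first lattice direction, Equation (\ref{eq:geometricAssocFamily}) expresses $f^\alpha_1 - f^\alpha$ as a combination of $(n_1 - n)$, which lies in $P$ and is therefore fixed by $\pi$, and $(n_1 \times n)$, which is tilted out of $P$. I would observe that $n_1 - n \perp n_1 \times n$, so their in-plane parts remain orthogonal and the cross term drops out of $\|\pi(f^\alpha_1 - f^\alpha)\|^2$. Using $\|n_1 \times n\|^2 = \|\foh(n_1 + n)\|^2 \|n_1 - n\|^2$ (which follows from $\|n\| = \|n_1\| = 1$) and identifying the tilt angle $\phi$ of $n_1 \times n$ out of $P$ with the angle between $N$ and the midpoint $\foh(n_1 + n)$, the length collapses to $\|\pi(f^\alpha_1 - f^\alpha)\|^2 = \|f^0_1 - f^0\|^2 (\cos^2\alpha + \sin^2\alpha\, d^2)$. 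Since $f^0_1 - f^0 = \|f^0_1 - f^0\|^2 (n_1 - n)$ by Christoffel duality, the projected edge is the original edge scaled by $\varsigma = \sqrt{\cos^2\alpha + \sin^2\alpha\, d^2}$ and rotated by $\theta$ with $\cos\theta = \cos\alpha / \varsigma$, the rotation carrying the $(n_1 - n)$ direction into the in-plane mixture of $(n_1 - n)$ and $(n_1 \times n)$.

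Finally I would close the argument by noting that both $\varsigma$ and $\theta$ depend only on $\alpha$ and on the height $d$, and that $d$ is common to the entire circular Gau{\ss} map quad. The identical computation in the second lattice direction therefore returns the same $\varsigma$ and the same $\theta$, establishing (ii); moreover the same $d$ governs the two shifted edges as well, so all four projected edges undergo one and the same scaling and rotation. Hence the projected immersion quad is a similarity image of the circular immersion quad of $(f^0,n)$, as claimed. The crux, as anticipated, is step (ii): the nontrivial point is that the per-edge tilt angle $\phi$ --- which a priori could differ between lattice directions --- is entirely absorbed into the single circumcircle height $d$, making the similarity uniform across the whole quad rather than a mere direction-by-direction rescaling.
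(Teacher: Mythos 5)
Your proposal is correct and takes essentially the same route as the paper's proof: express each edge via Equation (\ref{eq:geometricAssocFamily}), use circularity of the Gau{\ss} map quad to identify the single circumcircle height $d$ with the tilt of $n_i \times n$ out of the face tangent plane, and conclude that $\varsigma = \sqrt{\cos^2\alpha + \sin^2\alpha\, d^2}$ and $\cos\theta = \cos\alpha/\varsigma$ depend only on $\alpha$ and $d$, hence are uniform across the quad. Your explicit observations---that the in-plane parts of $n_1 - n$ and $n_1 \times n$ remain orthogonal so the cross term drops from the length computation, and that the same $d$ also governs the two shifted edges---merely spell out steps the paper leaves implicit.
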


We can now prove the main result of this section.

\begin{theorem}[Minimality of the associated family]
All contact element nets $(f^\alpha, n)_{\alpha \in [0,2\pi]}$ in the associated family of a discrete isothermic minimal net $(f^0, n)$ are minimal edge-constraint nets.
\begin{proof}
Choose an arbitrary $\alpha \in [0,2\pi]$. Consider a single quad of $(f^\alpha, n)$ with projection direction $N$, projection map $\pi$ and partial derivatives given by the diagonals. By Lemma \ref{lem:projQuads} there exist a rotation by angle $\theta$ in the plane perpendicular to $N$ and a scaling factor $\varsigma$ (both depending on $\alpha$) that bring the original immersion quad of $f^0$ into the projected associated family quad of $f^\alpha$. Noticing that $\cos\alpha$ equals zero or one exactly when $\cos\theta$ is also zero or one, respectively, we write:

\begin{eqnarray}
\pi(f^\alpha_x) &=& \varsigma \|f^0_x\| (\cos\theta \frac{f^0_x}{\|f^0_x\|} + \sin\theta \frac{f^0_y}{\|f^0_y\|}) ~ \mathrm{and} \nonumber \\
\pi(f^\alpha_y) &=& \varsigma \|f^0_y\| (-\sin\theta \frac{f^0_x}{\|f^0_x\|} + \cos\theta \frac{f^0_y}{\|f^0_y\|}).
\end{eqnarray}

The original net $(f^0, n)$ is discrete isothermic so Equation (\ref{eq:dualDiagonals}) implies that: $\frac{f^0_x}{\|f^0_x\|} = -\frac{n_y}{\| n_y \|}$ and $\frac{f^0_y}{\|f^0_y\|} = -\frac{n_x}{\| n_x \|}$; and that $\| f^0_x \| \| n_y \| = \| n_x \| \| f^0_y \|= 2$. Therefore, we compute twice the mixed area and see that it vanishes. 
\begin{eqnarray}
2 A(f^\alpha, n) &=& \det(\pi(f^\alpha_x), n_y, N) + \det(n_x, \pi(f^\alpha_y), N) \nonumber \\
&=& -2 \varsigma (\det(\sin\theta \frac{n_x}{\|n_x\|}, \frac{n_y}{\|n_y\|}, N) + \det(\frac{n_x}{\|n_x\|}, -\sin\theta \frac{n_y}{\|n_y\|}, N)) \nonumber \\
&=& 0.
\end{eqnarray}
\end{proof}
\end{theorem}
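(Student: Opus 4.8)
The plan is to reduce minimality to the vanishing of a single mixed area form per quad. The lemma immediately preceding this theorem already establishes that every member $(f^\alpha, n)$ of the associated family is an edge-constraint net, so the only thing left is to check that the mean curvature $\Hc = A(f^\alpha, n)/A(f^\alpha, f^\alpha)$ vanishes on each quad, equivalently that $A(f^\alpha, n) = 0$. I would fix an arbitrary $\alpha \in [0, 2\pi]$ and a single quad, taking the diagonals as the discrete partial derivatives. Since the mixed area form lives in the quad tangent plane $P \perp N$ and the Gau{\ss} map partials satisfy $N \perp n_x, n_y$, the quantity $A(f^\alpha, n)$ only involves the projected immersion partials $\pi(f^\alpha_x), \pi(f^\alpha_y)$ paired against the unprojected $n_x, n_y$.

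The central input is Lemma \ref{lem:projQuads}: the projection of the immersion quad of $(f^\alpha, n)$ into its Gau{\ss} map plane is a single rotation (by $\theta$) and scaling (by $\varsigma$) of the original circular quad of $(f^0, n)$. I would therefore write $\pi(f^\alpha_x)$ and $\pi(f^\alpha_y)$ explicitly as $\varsigma$-scaled, $\theta$-rotated linear combinations of the unit vectors $f^0_x/\|f^0_x\|$ and $f^0_y/\|f^0_y\|$, making sure a common angle $\theta$ is used for both partials so that the projected quad really is a rotated--scaled copy. I would then feed in the discrete isothermic structure of the base net via Equation (\ref{eq:dualDiagonals}), which for the diagonal partials gives $f^0_x/\|f^0_x\| = -n_y/\|n_y\|$ and $f^0_y/\|f^0_y\| = -n_x/\|n_x\|$ (the two directions swap, as in the smooth Christoffel dual), together with the length normalization $\|f^0_x\|\,\|n_y\| = \|n_x\|\,\|f^0_y\| = 2$ coming from the cross-ratio $-1$ assumption. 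Substituting these rewrites both projected immersion partials purely in terms of the normalized Gau{\ss} map partials $n_x/\|n_x\|$ and $n_y/\|n_y\|$.

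The computation then closes by evaluating
\begin{equation}
2A(f^\alpha, n) = \det(\pi(f^\alpha_x), n_y, N) + \det(n_x, \pi(f^\alpha_y), N).
\end{equation}
The key observation is that the $\cos\theta$ terms generate determinants with a repeated column, such as $\det(n_y, n_y, N)$ and $\det(n_x, n_x, N)$, which vanish identically, while the two $\sin\theta$ terms each produce $\det(n_x, n_y, N)$ but with opposite signs, so they cancel. Hence $A(f^\alpha, n) = 0$ and the net is minimal, independently of the sign choice for $N$ (which only rescales the whole mixed area by $\pm 1$).

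I expect the main obstacle to be bookkeeping rather than any conceptual difficulty: one must ensure that the single rotation angle $\theta$ and scale $\varsigma$ furnished by Lemma \ref{lem:projQuads} are applied consistently to both partials so that the cross terms line up to cancel, and one must correctly track the swapped $x \leftrightarrow y$ dual relation and the $\pm$ signs in Equation (\ref{eq:dualDiagonals}). Once the substitution is set up, the determinant cancellation is immediate, so the real content is the geometric reduction provided by Lemma \ref{lem:projQuads} together with the Christoffel dual identity, both of which are already available.
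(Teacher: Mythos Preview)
Your proposal is correct and mirrors the paper's own proof essentially step for step: both use Lemma~\ref{lem:projQuads} to express $\pi(f^\alpha_x),\pi(f^\alpha_y)$ as a common $(\varsigma,\theta)$-scaled rotation of $f^0_x,f^0_y$, then substitute the Christoffel dual relations from Equation~(\ref{eq:dualDiagonals}) so that the $\cos\theta$ terms give determinants with a repeated column and the two $\sin\theta$ terms cancel. Your identification of the bookkeeping (consistent $\theta$, the $x\leftrightarrow y$ swap, and the length normalization) as the only subtlety is apt.
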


In contrast to the smooth case, the Gau{\ss} curvature does not stay constant in the discrete associated family, but it changes in a controlled way:

\begin{theorem}[Gau{\ss} curvature of the associated family]
Let $(f^\alpha, n)$ be contact element net in the associated family of a discrete isothermic minimal net $(f^0, n)$ and let $\K^\alpha$ and $\K^0$ be their respective Gau{\ss} curvatures. Then
\begin{equation}
\K^\alpha = \frac{\K^0}{\varsigma^2},
\end{equation}
where $\varsigma^2 = \cos^2\alpha + \sin^2\alpha ~d^2$ is the square of the scaling factor as above and $d$ is the distance to the circumcenter of the corresponding Gau{\ss} map quad.

\medskip
\noindent \textup{Note that $\varsigma$ will approach one in the continuum limit.}
\begin{proof}
By Lemma \ref{lem:projQuads} we have $A(f^\alpha,f^\alpha) = \varsigma^2 A(f^0,f^0)$ and $A(n,n)$ is constant throughout the family.
\end{proof}
\end{theorem}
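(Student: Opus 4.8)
The plan is to go straight to the Steiner-formula definition of the Gau\ss\ curvature, $\K = A(n,n)/A(f,f)$ from Equation~(\ref{eq:meanGaussCurvature}), and to track how each of the two mixed-area factors behaves across the family. The decisive observation is that every member $(f^\alpha, n)$ shares the same Gau\ss\ map $n$. Since the projection direction $N = (n_x \times n_y)/\|n_x \times n_y\|$, and hence the quad tangent plane $P$, are determined by $n$ alone, they too are common to the entire family. Consequently the numerator $A(n,n)$ is literally the same real number for every $\alpha$, and the whole problem reduces to computing the single ratio $A(f^\alpha, f^\alpha)/A(f^0, f^0)$.

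For that denominator I would invoke Lemma~\ref{lem:projQuads} directly. Because $N$ is a fixed unit normal to $P$, the mixed area form $A(g,h) = \foh(\det(g_x, h_y, N) + \det(h_x, g_y, N))$ depends only on the projections $\pi(g_x), \pi(g_y), \pi(h_x), \pi(h_y)$ into $P$ (the $N$-components of the partials sit parallel to the third column of each determinant and contribute nothing); in particular $A(f^\alpha, f^\alpha)$ equals the signed area of the projected immersion quad with partials $\pi(f^\alpha_x), \pi(f^\alpha_y)$. Lemma~\ref{lem:projQuads} says exactly that this projected quad is obtained from the circular quad of $f^0$ by a single scaling $\varsigma$ and rotation $\theta$ in $P$, both independent of lattice direction. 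A rotation preserves area and a uniform scaling by $\varsigma$ multiplies it by $\varsigma^2$, so $A(f^\alpha, f^\alpha) = \varsigma^2 A(f^0, f^0)$, and combining with the constancy of $A(n,n)$ yields
\begin{equation*}
\K^\alpha = \frac{A(n,n)}{A(f^\alpha, f^\alpha)} = \frac{A(n,n)}{\varsigma^2 A(f^0, f^0)} = \frac{\K^0}{\varsigma^2}.
\end{equation*}

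The calculation is short, so the care needed lies in two bookkeeping points rather than in any hard estimate. First, one must confirm that $A(f^0, f^0)$ is the area of the unprojected circular quad of $f^0$ and not of some further projection: since $(f^0, n)$ is circular, its immersion quad lies in a plane parallel to the shared tangent plane $P$ (corresponding circular quads lie in parallel planes), so $\pi$ acts on it as a translation and preserves area, whence $\pi(f^0_x) = f^0_x$ and $\pi(f^0_y) = f^0_y$. Second, and this is the point where the argument could fail if the earlier lemma were weaker, the scaling factor $\varsigma$ must be common to the whole quad rather than edge-dependent, since only then is the projected quad an honest similarity image of the original. That uniformity is precisely the content of Lemma~\ref{lem:projQuads}, where $\varsigma$ and $\theta$ are shown to be independent of the lattice direction. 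With both points in hand the conclusion is immediate, and I expect the genuine difficulty of the section to have already been absorbed into Lemma~\ref{lem:projQuads}.
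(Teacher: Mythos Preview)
Your argument is correct and matches the paper's proof exactly: invoke Lemma~\ref{lem:projQuads} to get $A(f^\alpha,f^\alpha)=\varsigma^2 A(f^0,f^0)$, note that $A(n,n)$ is shared across the family, and divide. The additional bookkeeping you supply (that the mixed area only sees the projected partials, and that the circular quad of $f^0$ already lies in a plane parallel to $P$ so projection is trivial there) is not spelled out in the paper but is the right justification for the one-line claim made there.
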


As in the smooth case we can define the \emph{conjugate discrete isothermic minimal net} of $(f^0,n)$ as $(f^\frac\pi2,n)$. The members of the associated family are linear combinations of the discrete isothermic net and its conjugate net, since $(f^\frac\pi2,n)$ arises as the imaginary part of the complex Weierstrass vectors. The conjugate net is known to be in asymptotic line parametrization. Discrete analogues of such parametrizations are known as \emph{A-nets} and were originally introduced by Sauer \cite{Sauer:1950ca} and Wunderlich \cite{Wunderlich:1951wc} to investigate surfaces of constant negative Gau{\ss} curvature (as we will do in Section \ref{sec:knets}).

\begin{definition} [Discrete asymptotic net]
An edge-constraint net is an \emph{A-net} if its immersion has planar vertex stars (i.e., if all immersion edges meeting at a vertex lie in a common plane) and the Gau{\ss} map is given by choosing unit normals to these planes.
\end{definition} 
This definition corresponds to the fact that the osculating planes of asymptotic lines are the tangential planes of the surface.

\begin{lemma}
Every A-net is an edge-constraint net.
\end{lemma}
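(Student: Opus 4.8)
The plan is to unpack the definition of an A-net and show the edge-constraint condition $f_i - f \perp \tfrac{1}{2}(n_i + n)$ directly from the geometric setup. First I would observe that by definition the immersion $f$ has planar vertex stars, so at each vertex $v$ all edges emanating from $v$ lie in a common plane, and the Gau{\ss} map $n(v)$ is chosen to be a unit normal to that plane. The key consequence is that for any edge $f_i - f$, this edge lies in the vertex-star plane at $f$ and also in the vertex-star plane at $f_i$; hence it is perpendicular to \emph{both} normals individually: $f_i - f \perp n$ and $f_i - f \perp n_i$.

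Once I have these two perpendicularity relations, the edge-constraint is immediate by linearity: since $(f_i - f)\cdot n = 0$ and $(f_i - f)\cdot n_i = 0$, adding these gives $(f_i - f)\cdot(n_i + n) = 0$, which is exactly the statement that $f_i - f \perp \tfrac{1}{2}(n_i + n)$. So the proof is essentially a one-line observation, much like the circular-net lemma proved earlier in the excerpt. I would write it as: the edge $f_i - f$ lies in the planar vertex star at both $f$ and $f_i$, so it is perpendicular to the corresponding normals $n$ and $n_i$; thus it is perpendicular to their sum.

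The only subtlety to handle carefully is the phrase ``its immersion has planar vertex stars (i.e., if all immersion edges meeting at a vertex lie in a common plane).'' I want to be sure the edge $f_i - f$ is counted among the edges meeting at $f$ and also among the edges meeting at $f_i$, which is true since every edge has two endpoints and lies in the star of each. The normal $n$ at a vertex is, by the A-net definition, orthogonal to the whole star plane, so in particular orthogonal to that edge. There is no genuine obstacle here—this is a definitional unwinding rather than a computation—so I do not expect any difficulty; the main thing is to phrase the two perpendicularities cleanly and invoke linearity, as in the proof below.

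\begin{proof}
By definition of an A-net, the immersion $f$ has planar vertex stars and the Gau{\ss} map assigns to each vertex a unit normal to the corresponding star plane. For $i = 1,2$, the edge $f_i - f$ belongs to the vertex star at $f$ and to the vertex star at $f_i$, hence lies in both star planes. Since $n$ is normal to the star plane at $f$ and $n_i$ is normal to the star plane at $f_i$, we obtain $(f_i - f)\cdot n = 0$ and $(f_i - f)\cdot n_i = 0$. Adding these yields $(f_i - f)\cdot \foh(n_i + n) = 0$, so the edge-constraint holds.
\end{proof}
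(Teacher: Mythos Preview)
Your proof is correct and is exactly the unwinding of the definition that the paper has in mind; the paper's own proof is simply the one-liner ``This follows directly from the definition.'' You have just made explicit the two perpendicularities $(f_i-f)\cdot n=0$ and $(f_i-f)\cdot n_i=0$ that this sentence is pointing to.
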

\begin{proof}
  This follows directly from the definition.
\end{proof}
We close by showing that conjugate discrete isothermic minimal nets are indeed A-nets.

\begin{lemma}
The conjugate net $(f^{\frac \pi 2},n)$ of a discrete isothermic minimal net $(f^0, n)$ is an A-net.
\begin{proof}
From equation (\ref{eq:geometricAssocFamily}) the edges of the conjugate net satisfy:
\begin{equation}
f^\frac\pi2_1 - f^\frac\pi2 = - \| f^0_1 - f^0 \| (n_1 \times n) ~ \mathrm{and} ~ f^\frac\pi2_2 - f^\frac\pi2 = \| f^0_2 - f^0 \| (n_2 \times n).
\end{equation}
In particular, this means that all edges emanating from a generic vertex of $f^\frac\pi2$ are perpendicular to the corresponding Gau{ss} map $n$, which is the definition of an A-net.
\end{proof}
\end{lemma}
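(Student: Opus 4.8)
The plan is to read the edge directions of the conjugate net $(f^{\frac\pi2},n)$ straight off the geometric form of the associated family (Equation~\ref{eq:geometricAssocFamily}) and then verify the two defining conditions of an A-net: that the immersion edges meeting at each vertex are coplanar, and that $n$ supplies the unit normal to that plane. Since the earlier lemma already shows that \emph{every} member of the associated family is an edge-constraint net, the conjugate net is edge-constraint for free; only the planar-vertex-star condition needs work.

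First I would specialize Equation~\ref{eq:geometricAssocFamily} to $\alpha=\frac\pi2$. Because $\cos\frac\pi2=0$ and $\sin\frac\pi2=1$, the cosine term drops out and each forward edge $f^{\frac\pi2}_i-f^{\frac\pi2}$ becomes a scalar multiple of $n_i\times n$ (up to the lattice-direction sign). The decisive algebraic observation is then immediate: $n_i\times n$ is perpendicular to \emph{both} $n_i$ and $n$, so this edge is orthogonal to the Gau\ss{} map normals at both of its endpoints, not merely at its base vertex.

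Next I would assemble the full vertex star. The one point that requires care is that Equation~\ref{eq:geometricAssocFamily} is written for the two forward lattice directions at a chosen base vertex, whereas the star at a vertex $f$ also contains the two backward edges (to $f_{-1}$ and $f_{-2}$). Because shift notation is base-point independent, applying the same formula at $f_{-1}$ and $f_{-2}$ shows that these backward edges are likewise proportional to cross products of their endpoint normals, one factor of which is the normal $n$ at the shared vertex $f$. Hence every immersion edge incident to $f$ is perpendicular to $n$, so all such edges lie in the single plane through $f$ orthogonal to $n$. That is precisely a planar vertex star with $n$ as its unit normal, i.e.\ the definition of an A-net.

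I expect the main (and essentially only) obstacle to be exactly this bookkeeping: promoting the forward-direction formula to a statement about all incident edges, and being explicit that perpendicularity to the \emph{endpoint} normal—not just the base-vertex normal—is what forces the planarity to hold simultaneously at both ends of every edge. A secondary point is the qualifier \emph{generic} in the statement: since $n_i\times n$ degenerates only when $n_i=\pm n$, the planar-vertex-star conclusion holds at every vertex whose incident Gau\ss{} map normals are pairwise non-antipodal, which I would note in passing rather than analyze in detail.
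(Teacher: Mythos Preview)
Your proposal is correct and follows essentially the same route as the paper: specialize Equation~(\ref{eq:geometricAssocFamily}) to $\alpha=\tfrac{\pi}{2}$, observe that each edge is proportional to $n_i\times n$ and hence orthogonal to $n$, and conclude the vertex stars are planar with normal $n$. Your explicit handling of the backward edges and the ``generic'' caveat are a bit more careful than the paper's terse argument, but the idea is identical.
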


\subsection{Discrete constant mean curvature surfaces}
\label{sec:cmc}
We start with a general definition. 

\begin{definition}[Constant mean curvature edge-constraint net]
An edge-constraint net $(f,n)$ is said to have \emph{constant mean curvature} if every quad of the net has the same non-vanishing mean curvature $\Hc \in \R^*$.
\end{definition}

Like their simpler minimal cousins, smooth surfaces of constant mean curvature are often described in isothermic parametrizations. In such coordinates the Gau{\ss}-Codazzi equation is given in terms of the conformal metric parameter $u$ (defined by $ds^2 = e^u (dx^2 + dy^2)$), and reduces to the integrable elliptic sinh-Gordon equation,
\begin{equation}
u_{x y} = \sinh(u).
\end{equation}
Techniques of soliton theory have been immensely successful in explicitly constructing and classifying constant mean curvature surfaces (e.g., tori \cite{Pinkall:1989up,Bobenko:1991vd}) in the classical setting of Euclidean three-space and other space forms. The integrability condition of a suitably gauged frame can be identified with the Lax representation of the integrable equation \cite{Sym:1985kl, Bobenko:1994tv} which harnesses methods from soliton theory for geometry and allows for structure preserving discretizations \cite{Bobenko:2008tn}. Moreover, one can then explicitly describe the immersed surfaces in terms of the so-called Sym--Bobenko formula, which by construction simultaneously generates the \emph{associated family}.

Using this method Bobenko \& Pinkall \cite{Bobenko:1999us} defined discrete constant mean curvature surfaces as a subclass of discrete isothermic surfaces, just as they did for discrete minimal surfaces. For smooth constant mean curvature surfaces the DPW method \cite{Dorfmeister:1998wq} is a Weierstrass type method that allows the construction of all cmc surfaces from holomorphic / meromorphic data. A discrete version of this method giving rise to the same frame description as Bobenko \& Pinkall can be found in \cite{Hoffmann:1999vm}.

As with the minimal case, these discrete \emph{isothermic} surfaces arising from the frame description have previously been shown to have constant mean curvature \cite{Bobenko:2010eg}, but once again, the naturally arising associated family leaves the realm of the special isothermic parametrization (to more general conformal parametrizations in the smooth setting). Thus there has been no notion of discrete mean curvature through which this family could be geometrically understood. Again, we rectify this by showing that the original discrete isothermic net and its entire algebraically generated associated family are in fact constant mean curvature edge-constraint nets.

Since the curvature theory of edge-constraint nets satisfies the Steiner formula (Equation (\ref{eq:steinerFormula})), the linear Weingarten relationship and its corollary come for free by calculating the curvatures of an offset surface.
\begin{lemma}[Linear Weingarten relationship]
Let $(f,n)$ be an edge-constraint net. Then for any $t \in \R$ the edge-constraint net given by the offset $f^t = (f + t n, n)$ has curvatures 
\begin{equation}
\K_{f^t} = \frac{\K}{1 + 2 \Hc t + \K t^2} ~\mathrm{and}~\Hc_{f^t} = \frac{\Hc + \K t}{1 + 2 \Hc t + \K t^2}, \label{eq:offsetCurvatures}
\end{equation}
where $\K, \Hc$ are the Gau{\ss} and mean curvatures of the original surface $(f,n)$. If $(f,n)$ has constant mean curvature, then there exist $\alpha, \beta \in \R$ only depending on $t, \Hc$ such that 
\begin{equation}
\alpha \K_{f^t} + \beta \Hc_{f^t} = 1.
\end{equation}
In other words, for each offset net, $\alpha, \beta$ are constant on all quads of the net.

\begin{proof}
Choose $\alpha = - t (\frac1\Hc + t)$ and $\beta = \frac1\Hc + 2t$.
\end{proof}
\end{lemma}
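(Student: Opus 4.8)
Нужно доказать линейное соотношение Вайнгартена для offset-сетей edge-constraint net.

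Утверждение состоит из двух частей:
1. Формулы кривизн offset-сети
2. Линейное соотношение Вайнгартена для CMC-сетей

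Разберём подход к доказательству.

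The plan is to derive the two offset-curvature formulas directly from the Steiner formula and then verify the linear Weingarten identity by a short algebraic substitution. Throughout, the key structural fact is that an offset $(f+tn,n)$ keeps the same Gau\ss{} map $n$, hence (by Definition \ref{def:partialsBigNCurvatures}) the same projection direction $N$, the same tangent plane $P$, and the same partial derivatives $n_x,n_y$ on every quad. Consequently the mixed area form $A(\cdot,\cdot)$ computed for the offset uses the identical $N$, so it is bilinear and consistent across $(f,n)$ and $(f^t,n)$.

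First I would record the bilinearity of $A$ in its two slots and the elementary expansions $A(f^t,f^t)$, $A(f^t,n)$ obtained by substituting $f^t=f+tn$. The Steiner formula already gives $A(f^t,f^t)=(1+2t\Hc+t^2\K)\,A(f,f)$, so this is the denominator in both target expressions. For the numerators, one computes $A(f^t,n)=A(f,n)+tA(n,n)=(\Hc+t\K)A(f,f)$ by definition of $\Hc,\K$ in Equation (\ref{eq:meanGaussCurvature}), and $A(n,n)=\K A(f,f)$ is unchanged. Dividing by the offset area element $A(f^t,f^t)$ then yields precisely
\begin{equation*}
\Hc_{f^t}=\frac{A(f^t,n)}{A(f^t,f^t)}=\frac{\Hc+\K t}{1+2\Hc t+\K t^2},\qquad
\K_{f^t}=\frac{A(n,n)}{A(f^t,f^t)}=\frac{\K}{1+2\Hc t+\K t^2},
\end{equation*}
which are the two stated formulas. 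All of this is mechanical once one observes that $N$, and hence the mixed area form, is inherited unchanged by the offset.

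For the linear Weingarten statement I would take the claimed $\alpha,\beta$ from the hint and simply verify $\alpha\K_{f^t}+\beta\Hc_{f^t}=1$. Both quantities share the denominator $D:=1+2\Hc t+\K t^2$, so the identity reduces to checking $\alpha\K+\beta(\Hc+\K t)=D$ as an equality of expressions in $\K$ and $\Hc$. Substituting $\alpha=-t(\tfrac1\Hc+t)$ and $\beta=\tfrac1\Hc+2t$ and collecting the coefficients of $\K$ and of $\Hc$ separately should reproduce $1+2\Hc t+\K t^2$; the only place the constancy of $\Hc$ is used is in guaranteeing that $\alpha,\beta$ are genuinely the \emph{same} constants on every quad, which is exactly the content of the final sentence. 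I do not anticipate a real obstacle here: the substance is entirely in the first step's recognition that the offset preserves $N$ and the partial derivatives, and the Weingarten relation is then a one-line polynomial check. The only point requiring mild care is the degenerate-offset case where $D=0$ (the offset quad has vanishing area), which the paper's standing assumption $A(f^t,f^t)\neq0$ already excludes.
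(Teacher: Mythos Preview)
Your proposal is correct and follows essentially the same approach as the paper: the offset-curvature formulas drop out of the Steiner formula and bilinearity of $A(\cdot,\cdot)$ (which the paper leaves implicit), and the Weingarten identity is verified by direct substitution of the stated $\alpha,\beta$. Your expansion of $\alpha\K+\beta(\Hc+\K t)=1+2\Hc t+\K t^2$ is exactly the one-line check the paper's terse proof is inviting the reader to do.
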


\begin{figure}[ht]
  \centering
  \includegraphics[width=\hsize]{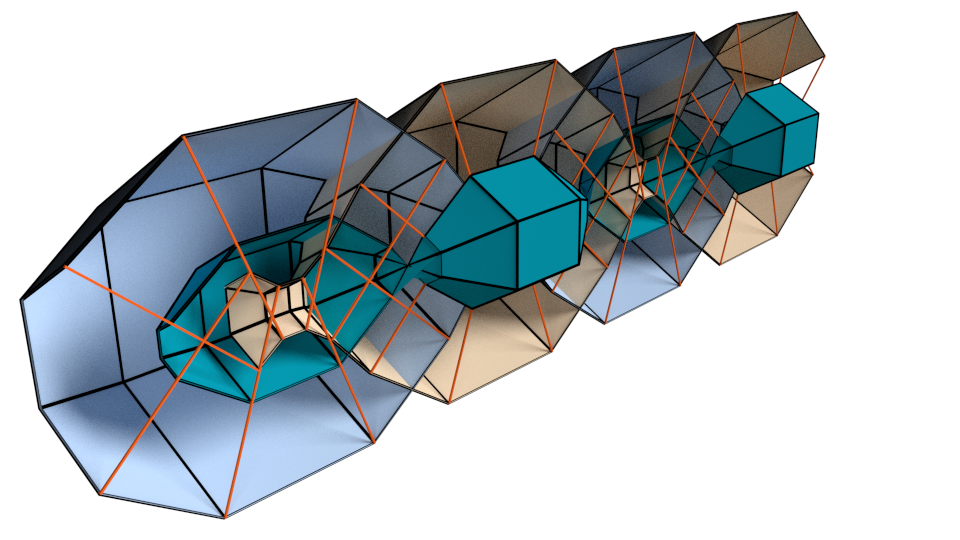}
  \caption{Two dual Delaunay nets with the positive Gau{\ss} curvature net inbetween. The normal lines connecting them are shown in orange.}
  \label{fig:parallelCMCnets}
\end{figure}

An important corollary is that constant mean curvature edge-constraint nets come in pairs, just like their smooth counterparts (see Figure~\ref{fig:parallelCMCnets}).

\begin{corollary}
Let $(f,n)$ be a constant mean curvature edge-constraint net with mean curvature $\Hc = -\frac1h$, for some $h \in \R^+$. Then the offset net $(f^*, n) := (f+h n, n)$ is also a constant mean curvature edge-constraint net with mean curvature $\Hc^* = -\mathcal{H}$ and the \emph{middle} edge-constraint net $(\hat f, n) := (f+\frac{h}{2} n, n)$ has constant positive Gau{\ss} curvature $4 \mathcal{H}^2$.
\end{corollary}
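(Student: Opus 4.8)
The plan is to treat both assertions as direct applications of the offset curvature formulas in Equation (\ref{eq:offsetCurvatures}), which the preceding Linear Weingarten Lemma already supplies. First I would note that both $(f^*, n)$ and $(\hat f, n)$ are genuine offset nets of $(f,n)$, namely $f^t$ with $t = h$ and $t = h/2$ respectively; so by Lemma \ref{lem:offsetNets} they are edge-constraint nets and Equation (\ref{eq:offsetCurvatures}) applies verbatim. The whole argument then reduces to substituting $\Hc = -\frac1h$ into those two rational expressions and simplifying.

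For the offset at $t = h$, I would compute the common denominator $1 + 2\Hc h + \K h^2$; substituting $\Hc = -\frac1h$ turns $2\Hc h$ into $-2$, so the denominator becomes $\K h^2 - 1$. The numerator of $\Hc_{f^h}$ is $\Hc + \K h = \frac{\K h^2 - 1}{h}$, which carries exactly the same factor $\K h^2 - 1$. Cancelling yields $\Hc^* = \frac1h = -\Hc$. The crucial observation is that the factor involving $\K$ cancels, so $\Hc^*$ is independent of the (possibly quad-dependent) Gau{\ss} curvature; since $h$ is a single global constant, $\Hc^* = \frac1h$ is the same nonvanishing value on every quad, which is precisely constancy of the mean curvature of the offset.

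For the middle net at $t = h/2$ I would instead read off the Gau{\ss} curvature $\K_{\hat f} = \frac{\K}{1 + 2\Hc(h/2) + \K (h/2)^2}$. With $\Hc = -\frac1h$ the term $2\Hc(h/2) = \Hc h = -1$ cancels the leading $1$, collapsing the denominator to $\frac{\K h^2}{4}$. The $\K$ in the numerator then cancels against the denominator, giving $\K_{\hat f} = \frac{4}{h^2} = 4\Hc^2$, which is manifestly positive and, again, a single global constant independent of the per-quad value of $\K$. This is the discrete analogue of the classical fact that the surface midway between a CMC pair carries constant positive Gau{\ss} curvature.

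The only point needing care — and the step I expect to be the real obstacle rather than the bookkeeping — is well-definedness of these curvatures, since Equation (\ref{eq:offsetCurvatures}) is only meaningful when the relevant offset immersion has non-vanishing area. Concretely, $A(f^*,f^*) = (\K h^2 - 1)\,A(f,f)$ and $A(\hat f,\hat f) = \tfrac14 \K h^2\, A(f,f)$, so the middle-net computation tacitly uses $\K \neq 0$. I would therefore invoke the standing assumption (the \emph{Degenerate immersions} remark) that curvature statements are made on quads of non-vanishing area, under which the cancellations above are legitimate and the stated values $\Hc^* = -\Hc$ and $\K_{\hat f} = 4\Hc^2$ hold on every quad.
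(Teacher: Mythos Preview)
Your proposal is correct and is exactly the intended argument: the paper states this as an unproven corollary of the Linear Weingarten lemma, and direct substitution of $t=h$ and $t=h/2$ into Equation (\ref{eq:offsetCurvatures}) is precisely what is meant. Your added remark about the non-degeneracy caveat (requiring $\K\neq 0$ on each quad for the middle-net curvature to be defined) is a welcome point of care that the paper leaves implicit.
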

\begin{remark}
If $(f,n)$ is a discrete isothermic net of constant mean curvature $\Hc = -\frac1h$ then the offset net $(f+h n, n)$ is in fact the discrete Christoffel dual isothermic net (Definition \ref{def:isothermicNet}).
\end{remark}

For simplicity for the rest of our discussion we rescale to $\Hc = -1$.

\begin{lemma}
Let $(f,n)$ be an edge-constraint net with unit offset $(f^*,n) := (f+n,n)$. Consider a single quad, then
\begin{equation}
\label{eq:cmcVanishingMixedArea}
A(f,f^*) = 0 \iff \Hc = A(f,n)/A(f,f) = -1.
\end{equation}
\end{lemma}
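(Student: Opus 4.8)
The plan is to exploit the bilinearity and symmetry of the mixed area form, which reduces the statement to a one-line algebraic identity. The key observation is that the offset immersion $f^* = f + n$ shares the Gau{\ss} map $n$ with $f$ (this is exactly Lemma \ref{lem:offsetNets}), so both nets produce the same partial derivatives $n_x, n_y$ of the Gau{\ss} map, hence the same projection direction $N$ and the same tangent plane $P$. Consequently the mixed area forms $A(f,f)$, $A(f,n)$, and $A(f,f^*)$ all live in one common plane and may be compared directly.

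First I would record that the partial derivatives are defined as midpoint connectors (Equation (\ref{eq:partialDerivatives})), and this assignment is \emph{linear} in the net. Therefore $f^*_x = f_x + n_x$ and $f^*_y = f_y + n_y$. Plugging these into the definition of the mixed area form (Equation (\ref{eq:curvatureForms})) and using that $A(g,h)$ is bilinear in $(g,h)$, I expand
\begin{equation}
A(f,f^*) = A(f, f+n) = A(f,f) + A(f,n).
\end{equation}

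Finally I would conclude the equivalence. From the displayed identity, $A(f,f^*) = 0$ holds exactly when $A(f,n) = -A(f,f)$. Since every statement about curvatures assumes the immersion quad has non-vanishing area $A(f,f) \neq 0$ (the standing nondegeneracy assumption), I may divide to obtain $\Hc = A(f,n)/A(f,f) = -1$, and conversely. There is essentially no obstacle here: the only point requiring care is verifying that $f$ and $f^*$ genuinely share the same projection direction $N$ so that the three forms are comparable, and this is immediate from the fact that $N$ depends only on the common Gau{\ss} map $n$. The remainder is the bilinear expansion above together with the nonvanishing-area hypothesis.
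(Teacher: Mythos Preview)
Your argument is correct and is precisely the intended one: the paper does not even spell out a proof here, treating the identity $A(f,f^*)=A(f,f)+A(f,n)$ as immediate from bilinearity of the mixed area form and linearity of the midpoint connectors. Your care in noting that $N$ depends only on $n$, so that all three forms live in the same tangent plane, is the only point worth making explicit, and you have done so.
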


In other words, vanishing mixed area between a net and its unit offset for every quad is equivalent to both nets having constant mean curvature. The condition
\begin{equation}
\label{eq:cmcogDefinition}
A(f,f^*) = \det(f_{12} - f, f_2^* - f_1^*, N) + \det(f_{12}^* - f^*, f_2 - f_1, N) = 0
\end{equation}
can be understood geometrically as the vanishing sum of the (projected) areas of the curves formed by $f, f_1^*, f_{12}, f_2^*$ and $f^*, f_1, f_{12}^*, f_2$ which we denote $g$ and $g^*$, respectively:
\begin{eqnarray}
&g = f, g_1 = f_1^*, g_{12} = f_{12},~\mathrm{and}~g_2 = f_2^*;&~\mathrm{and} \nonumber \\
&g^* = f^*, g_1^* = f_1, g_{12}^* = f_{12}^*,~\mathrm{and}~g_2^* = f_2.& \label{eq:combCubes}
\end{eqnarray}

Therefore, to prove that an edge-constraint net has constant mean curvature we switch between the two combinatorial cubes $C_f$ and $C_g$ formed by $f, f^*$ and $g,g^*$, respectively. They share the same vertex set but the edges of one are the diagonals of the other.

By showing that the algebraically generated associated family of discrete constant mean curvature nets of Bobenko and Pinkall \cite{Bobenko:1999us} are constant mean curvature edge-constraint nets, we find that their $C_g$ cubes are built from skew parallelograms, yielding an unexpected connection to the 3D compatibility cube for discrete curves from the theory of integrable systems \cite{Hoffmann:2008ub,TabachnikovBicycleMap}.
We now briefly recapitulate the moving frame description of these nets.

We identify Euclidean three space $\R^3$ with the imaginary part of the quaternions $\Quat = \spann{\{\qOne, \qi, \qj, \qk\}}$, i.e., $\R^3 \cong \spann{\{\qi, \qj, \qk\}} \cong \spann{\{ -i \sigma_1, -i \sigma_2, -i \sigma_3\}}$, where
\begin{equation}
\label{eq:PauliMatrices}
\sigma_1 = \mattwo0110, ~ \sigma_2 = \mattwo0{-i}{i}0, ~ \sigma_3 = \mattwo100{-1},
\end{equation}
are the 2x2 complex \emph{Pauli matrices} generating the Lie algebra su(2). Using this representation one describes the surface via a moving frame $\Phi \in \Im\Quat$ which rotates the orthonormal frame of $\R^3$ into the surface tangent plane and normal vector. Specifically, conjugation by a quaternion corresponds to a rotation and the frame encodes the Gau{\ss} map directly by rotating $\qk$, i.e., $n = - i \Phi^{-1} \sigma_3 \Phi = \Phi^{-1} \qk \Phi$. These frame descriptions are the natural language of integrable systems related to surface theory both smooth and discrete \cite{Bobenko:1994tv,Bobenko:1999us,Bobenko:2008tn}.

\begin{figure}[ht]
  \centering
  \includegraphics[width=0.3\hsize]{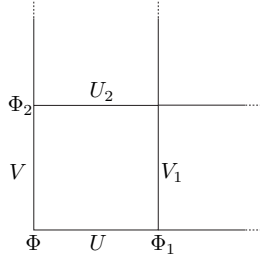}
  \caption{Shift notation for the frame description. The frame of a net $\Phi$ lives on vertices, while the corresponding \emph{Lax matrices} $U$ and $V$ live on edges; for both the frame and Lax matrices one can understand shifts in either lattice direction.}
  \label{fig:cmcFrameNotation.pdf}
\end{figure}

With the notation of Figure \ref{fig:cmcFrameNotation.pdf}, we introduce the frame $\Phi$ of interest \cite{Bobenko:1999us} by initially setting $\Phi$ equal to the identity and then defining the vertex shifts
\begin{equation}
\Phi_1 := U \Phi ~ \mathrm{and} ~ \Phi_2 := V \Phi,
\end{equation}
where 
\begin{equation}
  \label{eq:cmcFrame}
  U := \mattwo{a}{-\lambda u - \frac1{\lambda u}}
  {\frac{u}{\lambda} +
    \frac{\lambda}{u}}{\bar a} ~\mathrm{and}~
    V :=\mattwo{b}{-i \lambda v + \frac{i}{\lambda v}}
  {i\frac{\lambda }{v} -
    \frac{i}{\lambda}v}{\bar b}
\end{equation}
are the \emph{Lax matrices} with spectral parameter $\lambda = e^{i \alpha}$ for $\alpha \in [0,2\pi]$, $a, b$ complex valued functions living on vertices (and $\bar a, \bar b$ their complex conjugates), and $u,v$ positive real valued functions living on vertices. To guarantee that each quad closes, i.e., $\Phi_{12} = \Phi_{21}$, the Lax matrices and their shifts $V_1$ and $U_2$ must satisfy the compatibility condition (with determinants splitting evenly):

\begin{equation}
  V_1U = U_2V, ~\mathrm{with}~ \det(U) = \det(U_2) ~\mathrm{and}~ \det(V) = \det(V_1). \label{eq:zeroCurvatureCond}
\end{equation}
For every value of the spectral parameter the net is then generated by taking the imaginary part\footnote{Unlike in \cite{Bobenko:1999us} we purposefully \emph{do not} normalize the transport matrices $U$ and $V$ to have determinant 1. Thus $\Phi$ is not in $SU(2)$, necessitating taking the imaginary part.} of the Sym--Bobenko formula \cite{Bobenko:1999us} (an example is shown in Figure \ref{fig:SmythNets}),
\begin{eqnarray}
	n &:=& \Phi^{-1} \qk \Phi \nonumber \\
	\qf &:=& \left(-\Phi^{-1}\frac{d}{d\alpha}\Phi\vert_{\lambda = e^{i\alpha}} + \frac{1}{2}\Phi^{-1}\qk\Phi \right), \label{eq:symBobenko} \\ 
 	f &:=& \Im \qf.\nonumber
\end{eqnarray} 

\begin{figure}[ht]
  \centering
  \includegraphics[width=.33\hsize]{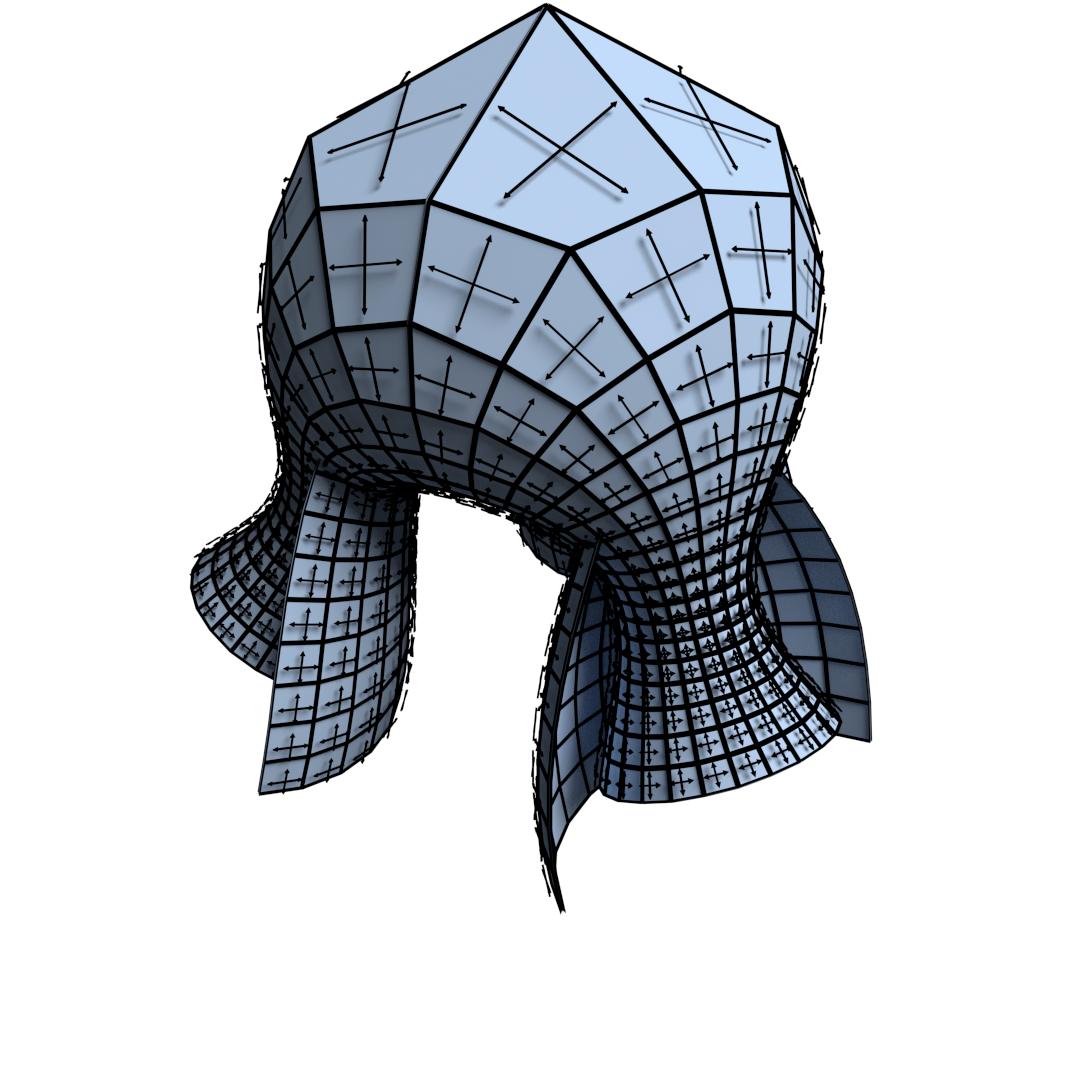}\includegraphics[width=.33\hsize]{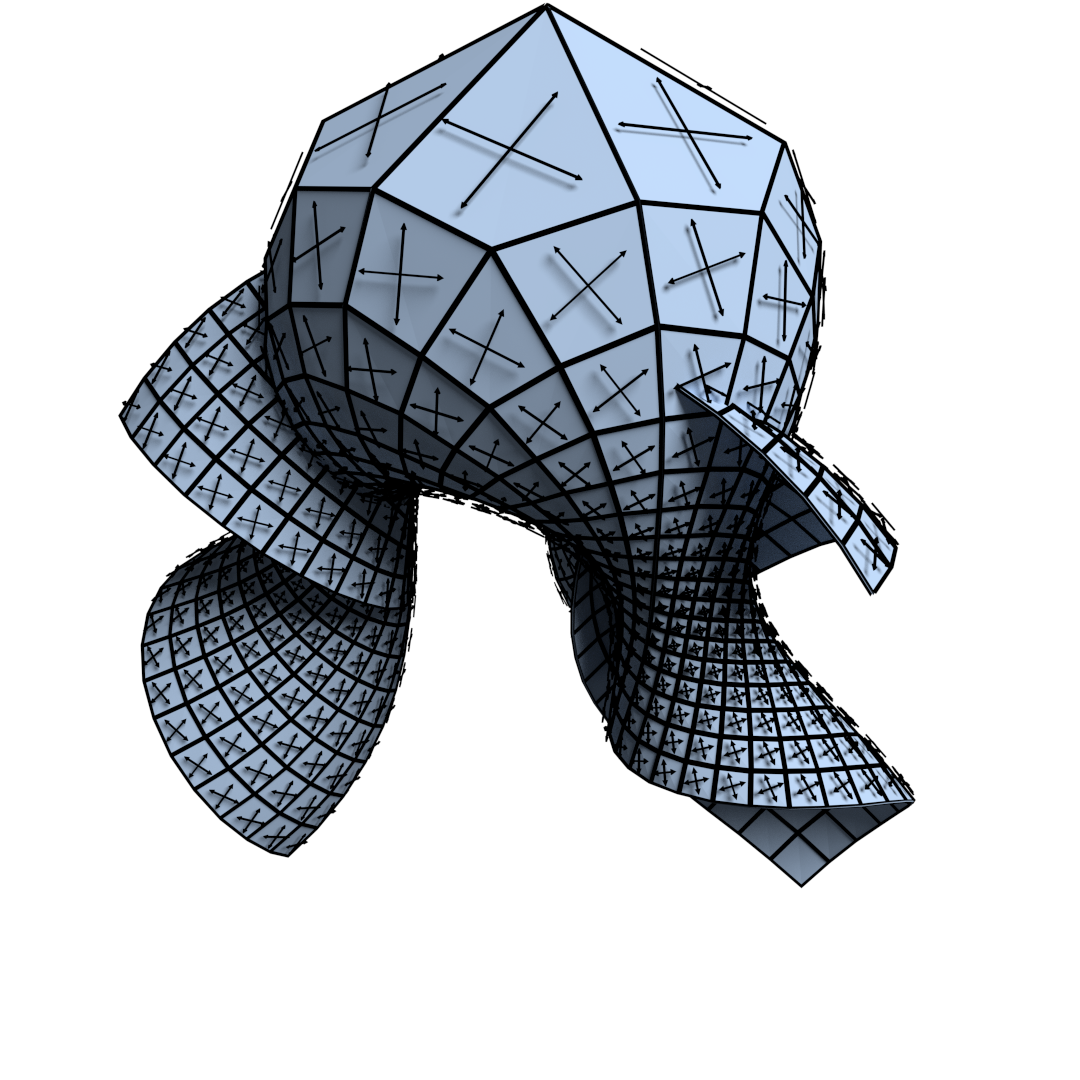}\includegraphics[width=.33\hsize]{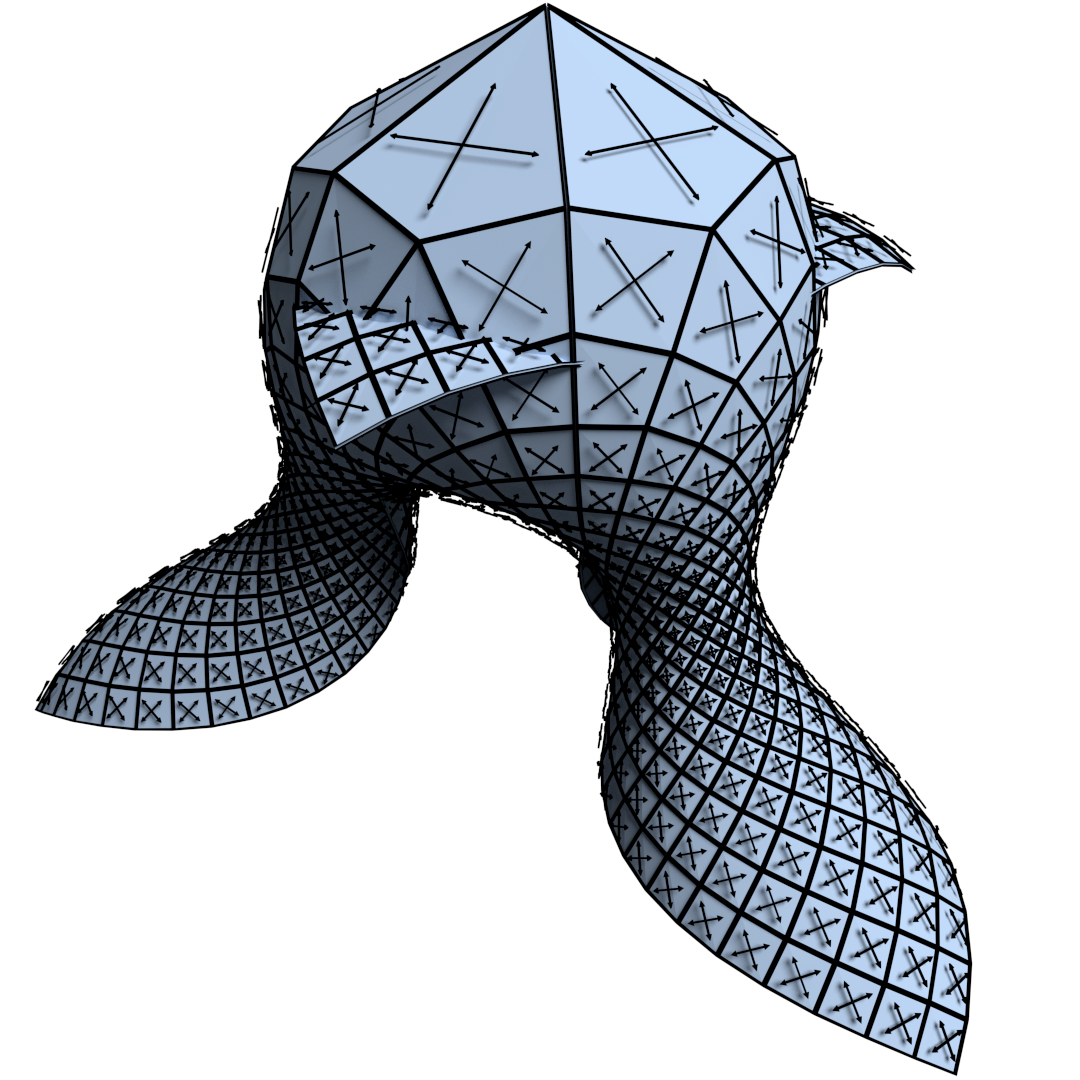}
  \caption{Three members of the associated family of a discrete constant mean curvature edge-constraint net Smyth surface, together with their curvature line fields. For smooth Smyth surfaces the associated family is known to be a reparametrization. Note how the curvature line fields keep their directions in the family.}
  \label{fig:SmythNets}
\end{figure}

\begin{definition}[Associated family]
\label{def:assocFamCMC}
Let $(f,n) := (f^\alpha,n^\alpha)$ be a net generated from the frame $\Phi$ (with spectral parameter $\lambda = e^{i \alpha}$) given by Equations (\ref{eq:cmcFrame}), (\ref{eq:zeroCurvatureCond}), and (\ref{eq:symBobenko}). We call $(f,n)$ a \emph{member of the associated family of a discrete isothermic constant mean curvature net}.
\end{definition}

\begin{lemma}
\label{lem:assocFamCMCEdgeConst}
Every member of the associated family of a discrete isothermic constant mean curvature net is an edge-constraint net. Furthermore, for $i = 1,2$ the edge-constraint is expressed in the equations
\begin{equation}
\label{eq:quatEdgeConst}
n_i  = -(\qf_i - \qf) n (\qf_i - \qf)^{-1}.
\end{equation}
\begin{proof} We defer the proof to Section \ref{sec:LaxPairedgeConstraint} where a general discussion of edge-constraint nets arising from Lax pairs is provided.
\end{proof}
\end{lemma}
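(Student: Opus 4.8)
The plan is to establish the quaternionic reflection identity (\ref{eq:quatEdgeConst}) first, and then read off the edge-constraint from it as an elementary consequence. Writing $M_1 := U$ and $M_2 := V$ and abbreviating $' := \tfrac{d}{d\alpha}\big\vert_{\lambda = e^{i\alpha}}$, the shift $\Phi_i = M_i\Phi$ together with the Sym--Bobenko formula (\ref{eq:symBobenko}) lets me conjugate everything back to a common frame. Differentiating $\Phi_i = M_i\Phi$ and substituting into (\ref{eq:symBobenko}) gives $\qf_i - \qf = \Phi^{-1}W_i\Phi$ and $n_i = \Phi^{-1}M_i^{-1}\qk M_i\Phi$, where $W_i := -M_i^{-1}M_i' + \tfrac12(M_i^{-1}\qk M_i - \qk)$. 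Setting $w := \qf_i - \qf$, the target identity $n_i = -w\,n\,w^{-1}$ becomes, after cancelling the conjugating frames, $M_i^{-1}\qk M_i = -W_i\,\qk\,W_i^{-1}$; conjugating once more by $M_i$, this is precisely the statement that $M_iW_i$ anticommutes with $\qk$.

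Since $\qk = -i\sigma_3$ commutes with diagonal matrices and anticommutes with off-diagonal ones, the whole claim reduces to showing that $M_iW_i$ is off-diagonal. Here the Lax structure does the work: the product telescopes, $M_iW_i = -M_i' + \tfrac12[\qk, M_i]$ with $[\qk,M_i] = \qk M_i - M_i\qk$. The commutator term is manifestly off-diagonal because commuting with $\sigma_3$ annihilates the diagonal, and the derivative term $M_i'$ is off-diagonal precisely because the diagonal entries of the Lax matrices in (\ref{eq:cmcFrame})---namely $a,\bar a$ and $b,\bar b$---carry no dependence on the spectral parameter $\lambda = e^{i\alpha}$, so $\tfrac{d}{d\alpha}$ kills them. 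Hence $M_iW_i$ is off-diagonal, it anticommutes with $\qk$, and (\ref{eq:quatEdgeConst}) follows for $i = 1,2$. This is exactly the general Lax-pair mechanism to be developed in Section \ref{sec:LaxPairedgeConstraint}: the edge-constraint is guaranteed whenever the diagonal of the transport matrices is independent of the spectral parameter.

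Finally I would deduce the edge-constraint itself from (\ref{eq:quatEdgeConst}). Multiplying $n_i = -w\,n\,w^{-1}$ on the right by $w$ gives $n_i w + w n = 0$. Decomposing $w = \Re w + \Im w$ with $\Im w = f_i - f$ (since $f = \Im\qf$) and using the imaginary-quaternion product rule $ab = -a\cdot b + a\times b$, the scalar part of $n_iw + wn = 0$ reads $(n_i + n)\cdot(f_i - f) = 0$, which is precisely $f_i - f \perp \tfrac12(n_i + n)$; the vector part records a compatible rotation but is not needed. I expect the main obstacle to be organizational rather than deep: carefully propagating the $\alpha$-derivative through the shifted Sym--Bobenko formula so that the conjugation by $\Phi$ telescopes cleanly (the term $-M_i^{-1}M_i'$ is exactly what survives from differentiating $\Phi_i = M_i\Phi$), and confirming that $n$ is a genuine unit imaginary quaternion---which holds because each $M_i$ has quaternionic form for $|\lambda| = 1$, so $\Phi$ does too. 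Once the reflection identity is in place, the edge-constraint costs only the scalar-part extraction above.
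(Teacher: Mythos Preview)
Your proposal is correct and follows essentially the same route as the paper's argument in Section~\ref{sec:LaxPairedgeConstraint}: both reduce the reflection identity~(\ref{eq:quatEdgeConst}) to the anticommutation of $U_\alpha$ with $\qk$ (equivalently, the off-diagonality of your $M_iW_i = -M_i' + \tfrac12[\qk,M_i]$), which holds precisely because the diagonal entries $a,\bar a,b,\bar b$ of the Lax matrices in~(\ref{eq:cmcFrame}) are independent of the spectral parameter. Your closing step, reading off the edge-constraint as the scalar part of $n_iw + wn = 0$, makes explicit a deduction the paper leaves implicit.
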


In general, the edge-constraint can be understood along a shift in either lattice direction $i=1,2$ as first negating $n$ and then rotating it along the edge $f_i - f$ to find $n_i$, which, when written quaternionically, gives rise to the following definition.

\begin{definition}[Normal transport quaternions]
\label{def:normalTransport}
Consider a quad from an edge-constraint net $(f,n)$. The quaternions given by $\phi := \tau + (f_1 - f), \tau \in \R$ and $\psi = \eta + (f_2 - f), \eta \in \R$ such that
\begin{equation}
n_1 = - \phi^{-1} n \phi ~\mathrm{and}~ n_2 = - \psi^{-1} n \psi,
\end{equation}
are called \emph{normal transport} quaternions.\footnote{Although inverses naturally arise on the right (Equation (\ref{eq:quatEdgeConst})) from the Sym--Bobenko formula, we prefer to define normal transports with inverses on the left; this simply corresponds to an opposite sign convention for the real part of the normal transport.}
\end{definition}

This perspective yields insight into the geometry of the cubes $C_f$ and $C_g$ for an arbitrary edge-constraint net.

\begin{lemma}[Edge-constraint as a skew parallelogram]
Let $(f,n)$ be an edge-constraint net with offset net $(f^*,n) = (f+n,n)$. For each quad, consider the combinatorial cubes $C_f$ and $C_g$ formed by it and its offset (as given in Equation (\ref{eq:combCubes})). Then the four sides of $C_f$ and $C_g$ are skew trapezoids and skew parallelograms, respectively.
\end{lemma}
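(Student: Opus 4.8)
The plan is to reduce the statement to a pair of elementary length computations once the correct reading of \emph{skew trapezoid} and \emph{skew parallelogram} is fixed: a spatial quadrilateral is a skew parallelogram when \emph{both} pairs of opposite edges have equal length, and a skew trapezoid when \emph{exactly one} pair does (the equal-length pair playing the role of the legs of an isosceles trapezoid). Since the top and bottom faces of $C_f$ are the immersion quad and its offset, and likewise for $C_g$, I only need to treat the four lateral faces of each cube. By the symmetry of the construction it suffices to analyze one lateral face per lattice direction; the remaining faces follow verbatim after replacing the base edge $f_i - f$ by the appropriate shifted net edge (e.g.\ $f_{12}-f_1$) and invoking the edge-constraint at the corresponding vertex.

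First I would set $e := f_1 - f$ and record the only input I will use: the edge-constraint $e \cdot (n_1 + n) = 0$, i.e.\ $e\cdot n_1 = - e\cdot n$, together with $\|n\| = \|n_1\| = 1$. For $C_f$ the direction-$1$ lateral face has vertices $f, f_1, f_1^*, f^*$ with oriented edges $e,\ n_1,\ -e-(n_1-n),\ -n$. The two legs $n_1$ and $-n$ have equal length $1$, while a direct expansion using $e\cdot n_1 = -e\cdot n$ gives $\|e+(n_1-n)\|^2 = \|e\|^2 + \|n_1-n\|^2 - 4\,e\cdot n$, which differs from $\|e\|^2$ in general. Hence exactly one pair of opposite edges is equal and the face is a skew trapezoid.

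The main content is the $C_g$ face, and here the edge-constraint does all the work. Using Equation (\ref{eq:combCubes}), the direction-$1$ lateral face of $C_g$ has vertices $g,g_1,g_1^*,g^* = f, f_1^*, f_1, f^*$ and oriented edges $e+n_1,\ -n_1,\ -e+n,\ -n$. The legs $-n_1$ and $-n$ again have equal length $1$; for the remaining pair I would expand $\|e+n_1\|^2 = \|e\|^2 + 2\,e\cdot n_1 + 1$ and $\|{-e}+n\|^2 = \|e\|^2 - 2\,e\cdot n + 1$ and observe that the edge-constraint equality $e\cdot n_1 = -e\cdot n$ forces these two to coincide. Thus \emph{both} pairs of opposite edges of the $C_g$ face are equal in length, making it a skew parallelogram. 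Repeating the two computations in the second lattice direction, and on the two back faces where the edge-constraint is applied at the shifted vertices $f_1$ and $f_2$, completes the proof. The only place any genuine hypothesis enters is the single cancellation $e\cdot n_1 = -e\cdot n$; everything else is bookkeeping of the cube combinatorics, so the principal obstacle is simply to keep the eight vertex identifications of $C_f$ versus $C_g$ straight rather than any analytic difficulty.
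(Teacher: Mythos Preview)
Your argument is correct and is exactly the computation the paper has in mind; in fact the paper states this lemma without proof, treating it as an immediate consequence of the definitions. The only substantive step is the cancellation $e\cdot n_1 = -\,e\cdot n$ from the edge-constraint, which you use precisely where it is needed to match $\|e+n_1\|^2$ with $\|{-e}+n\|^2$ on the $C_g$ face.

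One small remark on your framing: defining a skew trapezoid by ``\emph{exactly} one pair of equal-length opposite edges'' is slightly too restrictive, since in degenerate situations (e.g.\ $n_1=n$) the $C_f$ face can also become a parallelogram. The paper's intended reading is simply that the two ``leg'' edges $n$ and $n_1$ are always of unit length, with no claim on the remaining pair; your computation already establishes exactly this, so the issue is purely terminological and does not affect the proof.
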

Moreover, we will see that if $(f,n)$ is a member of the associated family of a discrete isothermic constant mean curvature net then for every quad we have the following three facts, that together imply that $(f,n)$ has constant mean curvature: (i) the top and bottom of $C_g$ are also parallelograms; (ii) all six sides are parallelograms of the same "folding parameter", so $C_g$ forms an "equally-folded parallelogram cube"; and (iii) every equally-folded parallelogram cube has vanishing (projected) mixed area between its top and bottom.

Let $g,g_1,g_{12},g_2$ be a skew parallelogram built from the edge lengths $\ell_1$ and $\ell_2$. It is straightforward to see that the dihedral angles $\delta_1$ and $\delta_2$ (measured between $0$ and $\frac\pi2$) along the diagonals of this skew parallelogram (understood as edges of the enclosing tetrahedron) satisfy $\frac{\sin\delta_1}{\ell_1} = \frac{\sin\delta_2}{\ell_2}$.

\begin{definition}
The \emph{folding parameter} of a skew parallelogram with the above notation is defined as 
\begin{equation}
\sigma := \frac{\sin\delta_1}{\ell_1} = \frac{\sin\delta_2}{\ell_2}.
\end{equation}
\end{definition} 

\begin{lemma}
\label{lem:buildSkewParallel}
Every skew parallelogram can be written in terms of two edges $g_1-g$ and $g_2 - g$ with lengths $\ell_1$ and $\ell_2$, respectively, and a folding parameter $\sigma$:
\begin{eqnarray}
\label{eq:skewParallelogramRotation}
&g_{12} - g_2 = (\rho \qOne + (g_2 - g_1)) (g_1 - g) (\rho \qOne + (g_2 - g_1))^{-1}, \nonumber \\
\label{eq:realPartFoldingParameter}
&\mathrm{where}~\rho = \frac1\sigma \left(\sqrt{1 - \sigma^2 \ell_1^2} - \sqrt{1 - \sigma^2 \ell_2^2}\right).
\end{eqnarray}
\begin{proof}
The real part $\rho$ is the same as $\Re \nu$ in Equation 3.15 of \cite{Hoffmann:2008ub}, with $k = \tan\frac{\delta_1}{2}\cot\frac{\delta_2}{2}$ and $s = \ell_1$. Using Jacobi Elliptic functions one can rewrite this expression to find the above equation.
\end{proof}
\end{lemma}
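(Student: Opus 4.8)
The plan is to reduce the statement to a single spatial rotation and then to a trigonometric identity. Equation (\ref{eq:skewParallelogramRotation}) asserts that conjugation by $q := \rho\qOne + (g_2 - g_1)$ carries the edge $g_1 - g$ to the opposite edge $g_{12} - g_2$, so the first task is to show that such a rotation exists with axis \emph{exactly} $g_2 - g_1$, and only afterwards to compute the correct real part $\rho$.

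First I would use that opposite sides of a skew parallelogram are equal, $\|g_1 - g\| = \|g_{12} - g_2\| = \ell_1$, so that some rotation $R$ satisfies $R(g_1 - g) = g_{12} - g_2$. To locate its axis, view the parallelogram as the two triangles $\triangle g g_1 g_2$ and $\triangle g_{12} g_1 g_2$ hinged along the common diagonal $[g_1, g_2]$. Writing $g_{12}^{\flat} := g_1 + g_2 - g$ for the planar (unfolded) fourth vertex, the triangles $\triangle g_1 g_2 g_{12}$ and $\triangle g_1 g_2 g_{12}^{\flat}$ are congruent, so $g_{12}$ arises from $g_{12}^{\flat}$ by rotating about the line through $g_1$ and $g_2$ by the fold angle $\theta$. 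Since $g_2$ lies on this axis and $g_{12}^{\flat} - g_2 = g_1 - g$, subtracting $g_2$ turns the affine rotation into the linear rotation about the direction $g_2 - g_1$ through $\theta$; hence $R$ has axis $g_2 - g_1$, as required. Because a quaternion $\rho\qOne + w$ with $w \in \Im\Quat$ realizes the rotation about $w$ through the angle $2\arctan(\|w\|/\rho)$, normalizing the imaginary part to be $g_2 - g_1$ itself (rather than a unit vector) forces $\rho = \|g_2 - g_1\|\cot(\theta/2)$, which establishes (\ref{eq:skewParallelogramRotation}) with this still-geometric value of $\rho$.

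The remaining---and hardest---step is to show that $\|g_2 - g_1\|\cot(\theta/2)$ equals the closed form in the statement. Since $\sin\delta_i = \sigma\ell_i$ gives $\sqrt{1 - \sigma^2\ell_i^2} = \cos\delta_i$, the target is equivalent to the clean identity $\|g_2 - g_1\|\,\sigma\cot(\theta/2) = \cos\delta_1 - \cos\delta_2$, relating the diagonal length and the fold angle to the two dihedral angles of the folding-parameter definition. I would obtain this by expressing $\theta$ and $\|g_2 - g_1\|$ through $\delta_1, \delta_2$ using the tetrahedral geometry of the folded parallelogram together with the sine relation $\sin\delta_1/\ell_1 = \sin\delta_2/\ell_2$, and then simplifying. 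The bookkeeping in this trigonometric reduction is the main obstacle; it is cleanest to route it, as the authors do, through Equation 3.15 of \cite{Hoffmann:2008ub}, recognizing $\rho$ as the real part $\Re\nu$ of the elementary folding quaternion there with $k = \tan\frac{\delta_1}{2}\cot\frac{\delta_2}{2}$ and $s = \ell_1$, and then rewriting the resulting expression with the Jacobi elliptic functions $\sn$ and $\dn$ to arrive at the stated form of $\rho$.
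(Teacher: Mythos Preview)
Your proposal is correct and lands on the same argument as the paper: both identify $\rho$ with $\Re\nu$ from Equation~3.15 of \cite{Hoffmann:2008ub} (with $k=\tan\frac{\delta_1}{2}\cot\frac{\delta_2}{2}$ and $s=\ell_1$) and then rewrite via Jacobi elliptic functions. The paper's proof is purely that citation, whereas you prepend a self-contained geometric justification---via the unfolded vertex $g_{12}^{\flat}=g_1+g_2-g$ and the hinge along $[g_1,g_2]$---for why the conjugating quaternion has imaginary part exactly $g_2-g_1$ and real part $\|g_2-g_1\|\cot(\theta/2)$; this is helpful exposition the paper leaves to the reference, not a genuinely different route.
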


This construction can be extended to three edges and a fixed folding parameter, yielding the known combinatorial 3D compatibility cube of skew parallelograms \cite{Hoffmann:2008ub,TabachnikovBicycleMap}:

\begin{theorem} [Darboux transform for parallelograms]
Let $g$ be a skew parallelogram with edge lengths $\ell_1, \ell_2$ and folding parameter $\sigma$. For every initial vector $\tilde n \in \R^3$ there exists a unique skew parallelogram $g^*$ at constant distance $\| \tilde n \|$ from $g$ such that:\\
\indent 1. $g^*$ also has edge lengths $\ell_1, \ell_2$ and folding parameter $\sigma$; and\\
\indent 2. every face of the combinatorial cube $C_g$ formed by $g$ and $g^*$ is a skew parallelogram of folding parameter $\sigma$.\\
We call this object an \emph{equally-folded parallelogram cube}.\footnote{Instead of fixing the folding parameter $\sigma$ one can also hold the real part $\rho$ constant; this is also 3D compatible as shown in \cite{Pinkall:2007uy}.}
\begin{proof}
Recall that $g$ itself can be generated from the two vectors $w_1 := g_1 - g$ and $w_2 := g_2 - g$ and the folding parameter $\sigma$. Therefore, we can rephrase the theorem statement as: Given $w_1, w_2, \tilde n$ and the folding parameter $\sigma$, show that completing the skew parallelogram twice in every direction forms a closed combinatorial cube. This is precisely the Bianchi Permutability Theorem for a single edge in the Darboux (B\"acklund) transformation of a discrete arc-length parametrized curve, a proof of which is given in \cite{Hoffmann:2008ub}.
\end{proof}
\end{theorem}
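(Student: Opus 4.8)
The plan is to reduce the statement to the three-dimensional consistency (Bianchi permutability) of the Darboux transformation of discrete arc-length parametrized curves, whose combinatorial cube is already shown to close in \cite{Hoffmann:2008ub}. First I would use Lemma \ref{lem:buildSkewParallel} to strip the statement down to its initial data: a skew parallelogram of folding parameter $\sigma$ is completely determined by the two edge vectors leaving one vertex together with $\sigma$, via the quaternionic rotation formula there. Hence the whole purported cube is prescribed by the three vectors $w_1 := g_1 - g$, $w_2 := g_2 - g$, and $\tilde n := g^* - g$ emanating from $g$, together with the single folding parameter $\sigma$. Setting $g^* := g + \tilde n$ fixes the transform at distance $\|\tilde n\|$, and the three neighbors $g_{12}, g_1^*, g_2^*$ are obtained by completing the bottom, front, and side faces, each with folding parameter $\sigma$, using Lemma \ref{lem:buildSkewParallel}. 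Since opposite edges of a skew parallelogram are equal in length, the three edge lengths $\ell_1, \ell_2, \|\tilde n\|$ propagate consistently around the cube, so every face carries a valid skew-parallelogram structure of parameter $\sigma$ (under the natural regularity hypothesis that $\sigma$ times each edge length stays at most one).

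Next I would phrase the closing requirement precisely. There are three routes to the far vertex $g_{12}^*$: completing the right face through $g_1$, completing the back face through $g_2$, or completing the top face through $g^*$. The content of the theorem is exactly that these three completions coincide, yielding a well-defined $g_{12}^*$ and a closed cube. To establish this, I would interpret one skew-parallelogram completion as a single step of the Darboux (B\"acklund) transformation of a discrete arc-length parametrized curve: the edge $g_1 - g$ is a segment of the base curve, the rung $g_2 - g$ is a Darboux step, and the completion produces the transformed segment $g_{12} - g_2$. Under this dictionary the folding parameter $\sigma$ is precisely the Darboux parameter, so the ``equally-folded'' condition is the statement that all three transformations share the same parameter. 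This identification is already latent in the proof of Lemma \ref{lem:buildSkewParallel}, where $\rho$ was matched to $\Re\nu$ of Equation~3.15 of \cite{Hoffmann:2008ub}.

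With the dictionary in place, the desired closing is exactly the Bianchi permutability theorem for a single edge of the base curve under the Darboux transformation, proved in \cite{Hoffmann:2008ub}; existence and uniqueness of $g^*$, and of the entire equally-folded parallelogram cube, then follow from the uniqueness of the skew-parallelogram completion in Lemma \ref{lem:buildSkewParallel}. The main obstacle is establishing this correspondence cleanly: one must verify that the rotation formula of Lemma \ref{lem:buildSkewParallel} agrees, under the identification $\sigma \leftrightarrow$ Darboux parameter, with the quaternionic transformation step of \cite{Hoffmann:2008ub}, and in particular that holding $\sigma$ fixed across all faces coincides with holding the Darboux parameter fixed. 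Once this is confirmed, the three-dimensional consistency is inherited directly and no further computation is needed.
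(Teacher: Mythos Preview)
Your proposal is correct and follows essentially the same route as the paper: reduce the cube to its initial data $w_1, w_2, \tilde n$ together with the folding parameter $\sigma$ via Lemma~\ref{lem:buildSkewParallel}, and then identify the closing condition with the Bianchi permutability theorem for the Darboux transformation of a discrete arc-length parametrized curve, citing \cite{Hoffmann:2008ub}. Your write-up is more explicit about the dictionary between $\sigma$ and the Darboux parameter and about the three routes to $g_{12}^*$, but the argument is the same.
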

We now come back to the viewpoint that one can switch between the combinatorial cubes $C_g$ and $C_f$ as introduced in Equation (\ref{eq:combCubes}).
\begin{theorem}
\label{thm:equallyFolded}
Consider an equally-folded parallelogram cube $C_g$ with bottom and top $g$ and $g^*$, respectively. Let $h \in \R$ be the constant distance between $g$ and $g^*$. Then the bottom and top quads of the corresponding cube $C_f$ with normals given by the vertical edges of $C_f$ are edge-constraint net quads with mean curvatures $-\frac1h$ and $\frac1h$, respectively.
\begin{proof}
Notice that $C_g$ can be constructed from three vectors and a folding parameter using the simplified equation for the real part of the rotation quaternions, Equation (\ref{eq:realPartFoldingParameter}).
Using the quaternionic description and $\hat G := ((g_{12}^* - g_{12}) - (g^* - g)) \times ( (g_2 - g_2^*) - (g_1 - g_1^*) )$ one finds that
\begin{equation}
\label{eq:gsumVanishing}
\det(g_{12}^* - g^*, g_2^* - g_1^*, \hat G) + \det(g_{12} - g, g_2 - g_1, \hat G) = 0.
\end{equation}
Using that $N \parallel \hat G$, this is equivalent to $A(f,f^*) = A(f, f + h n) = 0$. Therefore, the edge-constraint quads $(f,n)$ and $(f + h n, n)$ indeed have mean curvatures -$\frac{1}{h}$ and $\frac1h$.
\end{proof}
\end{theorem}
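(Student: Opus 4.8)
The plan is to reduce the whole statement to the vanishing of a single mixed area and then to verify that vanishing from the equally-folded structure of the cube. For the bottom quad $(f,n)$ with $h$-offset $(f^*,n) = (f+hn,n)$, the criterion behind Equation (\ref{eq:cmcVanishingMixedArea}) (rescaled by $h$) says that $\Hc = -\frac1h$ is equivalent to $A(f,f^*) = A(f,f+hn) = 0$, because $A(f,f+hn) = A(f,f) + hA(f,n) = A(f,f)(1+h\Hc)$. The companion value $\Hc^* = +\frac1h$ for the top quad then comes for free: mixed area is symmetric, so $A(f^*,f) = A(f,f^*) = 0$, and reading $f$ as the $(-h)$-offset of $f^*$ the same criterion gives $\Hc^* = +\frac1h$ (equivalently, one invokes the corollary that constant mean curvature nets come in offset pairs of opposite mean curvature). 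I would also note that $(f,n)$ and $(f^*,n)$ are honest edge-constraint quads: each side face of $C_g$ has one pair of opposite sides of automatic equal length $|g^*-g| = |g_1^*-g_1| = h$, while equality of the remaining pair, $|g_1-g| = |g^*-g_1^*|$, expands to exactly $(f_1 - f)\cdot(n_1+n) = 0$ (and analogously for the other three side faces); thus the side faces being skew parallelograms \emph{is} the edge-constraint (cf.\ the normal-transport quaternions of Definition \ref{def:normalTransport}).

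The heart of the argument is to push $A(f,f^*)$ onto the top and bottom faces of the cube. Applying the determinant expression of Equation (\ref{eq:cmcogDefinition}) together with the combinatorial dictionary of Equation (\ref{eq:combCubes}), which trades the edges of $C_f$ for the diagonals of $C_g$, every vector in $A(f,f^*)$ becomes a diagonal of the bottom face $g$ or of the top face $g^*$:
\begin{equation}
A(f,f^*) = \det(g_{12}-g,\, g_2-g_1,\, N) + \det(g_{12}^*-g^*,\, g_2^*-g_1^*,\, N),
\end{equation}
which is twice the sum of the signed areas of the two skew parallelograms projected along $N$. I would then pin down the projection direction: taking the Gau\ss{} map partials as the diagonals gives $N \parallel n_x\times n_y \parallel (n_{12}-n)\times(n_2-n_1)$, and since the vertical edges of $C_g$ are $+hn$ and $-hn_i$ at successive corners, this last cross product is a positive multiple of $\hat G := ((g_{12}^*-g_{12})-(g^*-g))\times((g_2-g_2^*)-(g_1-g_1^*))$. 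Hence $\hat G$ may replace $N$, and the theorem collapses to the single identity
\begin{equation}
\det(g_{12}-g,\, g_2-g_1,\, \hat G) + \det(g_{12}^*-g^*,\, g_2^*-g_1^*,\, \hat G) = 0.
\end{equation}

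To prove this identity I would use that the entire cube is governed by one folding parameter. By the Darboux transform for parallelograms, $C_g$ is generated from the two bottom edges $w_1 = g_1-g$, $w_2 = g_2-g$, the initial vector $g^*-g$, and a single $\sigma$ common to all six faces; Lemma \ref{lem:buildSkewParallel} with the real-part formula of Equation (\ref{eq:realPartFoldingParameter}) then writes each missing edge as a quaternionic conjugation, Equation (\ref{eq:skewParallelogramRotation}), of a known one, the conjugating quaternion having real part $\rho$ fixed by $\sigma$ and the relevant edge lengths. Substituting the resulting expressions for the top diagonals $g_{12}^*-g^*$ and $g_2^*-g_1^*$ into the second determinant and comparing with the first, the shared folding parameter forces the projected area of the top face to match that of the bottom face in magnitude but with reversed orientation along $\hat G$, so the two determinants cancel and $A(f,f^*)=0$.

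I expect the quaternionic bookkeeping of this last step to be the main obstacle: one must carry the conjugating quaternions for all completed edges around the cube and check that projection along $\hat G$ flips exactly one orientation. The explicit real-part formula (\ref{eq:realPartFoldingParameter}) is precisely what makes the cancellation \emph{exact} rather than holding only up to a $\sigma$-dependent factor, and the reason all these conjugations close consistently around the cube is the Bianchi permutability (3D-consistency) behind the Darboux transform for parallelograms. Once the identity is in hand, the mean curvatures $-\frac1h$ and $+\frac1h$ follow immediately from the first paragraph.
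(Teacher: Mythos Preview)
Your proposal is correct and follows essentially the same route as the paper: reduce to $A(f,f^*)=0$ via the rescaled criterion of Equation (\ref{eq:cmcVanishingMixedArea}), pass to the $C_g$ cube through the dictionary of Equation (\ref{eq:combCubes}), identify the projection direction with the vector $\hat G$, and then verify the resulting determinant identity using the quaternionic description of the equally-folded cube built from Equation (\ref{eq:realPartFoldingParameter}). Your write-up is considerably more explicit than the paper's---in particular your observation that the side faces of $C_g$ being parallelograms \emph{is} the edge-constraint, and your derivation of $N\parallel\hat G$, spell out steps the paper leaves implicit---but the architecture is the same.
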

\begin{remark}
The top and bottom faces in the previous theorem can be exchanged for any pair of opposite faces (i.e., front and back or left and right). It turns out that the direction of $\hat G$ defined in the previous proof is independent of this choice (possibly up to sign). In other words, the quad tangent planes arising from every pair of opposite faces coincide.
\end{remark}

\begin{theorem}
Let $(f,n)$ be a member of the associated family of a discrete isothermic constant mean curvature net (Definition \ref{def:assocFamCMC}) with spectral parameter $\lambda = e^{i \alpha}$. Then $(f,n)$ is a constant mean curvature edge-constraint net.
\begin{proof}
By Lemma \ref{lem:assocFamCMCEdgeConst} $(f,n)$ is an edge-constraint net. Consider the unit offset net $(f^*,n) = (f+n, n)$. For every quad we show that the corresponding combinatorial cube $C_g$ is an equally-folded parallelogram cube; the result then follows from Theorem \ref{thm:equallyFolded}.

We naturally extend the quaternionic description of $f = \Im \qf$ and $f^* = \Im (\qf + n)$ to $g := \Im \qg$ and $g^* := \Im \qg^*$, e.g., $\qg_1^* - \qg = \qf_1 - \qf$. The non-unit edges of the parallelograms of the front and left sides of $C_g$ are found to have squared lengths:
\begin{eqnarray}
&\ell_1^2 := \| g_1 - g \|^2 = 1 - 4(\frac{\cos(2\alpha)}{\det(U)} + \frac{\sin^2(2\alpha)}{\det(U)^2})&~\mathrm{and}\nonumber \\
\label{eq:cmcEdgeLengths}
&\ell_2^2 := \| g_2 - g \|^2 = 1 + 4(\frac{\cos(2\alpha)}{\det(V)} - \frac{\sin^2(2\alpha)}{\det(V)^2}).&
\end{eqnarray}
Recall that by assumption $\det(U) = \det(U_2)$ and $\det(V) = \det(V_1)$, so the back and right sides of $C_g$ are also parallelograms with non-unit edge lengths $\ell_1$ and $\ell_2$, respectively. Therefore, the top and bottom of $C_g$ are also parallelograms both built from the edge lengths $\ell_1$ and $\ell_2$.

The transports (in the sense of Lemma \ref{lem:buildSkewParallel}) that yield the front, left, and top skew parallelograms of $C_g$ are:
\begin{eqnarray}
\label{eq:gcubeRotations}
&- n_1 = (\qg_1^* - \qg) n (\qg_1^* - \qg)^{-1},& \nonumber \\
&- n_2 = (\qg_2^* - \qg) n (\qg_2^* - \qg)^{-1},& ~\mathrm{and} \\
&\qg_{12} - \qg_2 = (\qg_2 - \qg_1)(\qg_1 - \qg)(\qg_2 - \qg_1)^{-1}.& \nonumber
\end{eqnarray}
The real parts arising from the Sym--Bobenko formula for corresponding edges, e.g., $\Re(\qf_1 - \qf) = \foh\tr(U^{-1}\frac{\partial}{\partial\alpha}U)$, can be computed directly using the derivative of the determinant:
\begin{equation}
\frac{\partial}{\partial \alpha}\det(U) = \det(U) \tr(U^{-1}\frac{\partial}{\partial\alpha}U).
\end{equation}
Applying this formula to the "double transport", e.g. $V_1U$, shows that the sum of the real parts of $\qf_1 - \qf$ and $\qf_{12} - \qf_1$ is in fact the real part of the diagonal transport furnished by $\qf_{12} - \qf$. 
We therefore find:
\begin{eqnarray}
&\Re(\qg_1^* - \qg) = \frac{2 \sin(2 \alpha)}{\det(U)}, &\nonumber\\
\label{eq:cmcRealParts}
&\Re(\qg_2^* - \qg) = -\frac{2 \sin(2 \alpha)}{\det(V)},~ \mathrm{and} \\
&\Re(\qg_2 - \qg_1) = \Re(\qg_2 - \qg) - \Re(\qg_1 - \qg) = -\frac{(\det(U) + \det(V))\sin(2\alpha)}{\det(U)\det(V)}. \nonumber
\end{eqnarray} 
Plugging the real parts from Equations (\ref{eq:cmcRealParts}) with the edge lengths from Equations (\ref{eq:cmcEdgeLengths}) into the Equation (\ref{eq:realPartFoldingParameter}) yields the folding parameter $\sigma = \sin(2\alpha)$ in every instance.

Therefore $C_g$ is an equally-folded parallelogram cube. 
\end{proof}
\end{theorem}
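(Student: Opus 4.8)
The plan is to lean on the two structural results already available: Lemma~\ref{lem:assocFamCMCEdgeConst} tells us that $(f,n)$ is already an edge-constraint net, so the only thing left to prove is that its mean curvature is constant; and Theorem~\ref{thm:equallyFolded} converts a purely geometric statement about a cube into exactly that curvature conclusion. Thus I would reduce the whole theorem to a single claim: for every quad, the combinatorial cube $C_g$ built from the quad and its unit offset $(f^*,n)=(f+n,n)$ (with the edge/diagonal roles of $C_f$ interchanged as in Equation~\eqref{eq:combCubes}) is an equally-folded parallelogram cube. Once this is in place, Theorem~\ref{thm:equallyFolded} gives mean curvature $-\tfrac1h$ with $h=1$, i.e.\ $\Hc\equiv-1$ on every quad, which is precisely constant mean curvature at the rescaled value.

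First I would pass everything to quaternions, writing $g:=\Im\qg$ and $g^*:=\Im\qg^*$ so that the edges of $C_g$ are the diagonals of $C_f$ (concretely $\qg_1^*-\qg=\qf_1-\qf$, and analogously). The four side faces of $C_g$ record normal transports along the immersion edges of $C_f$: because the edge-constraint reads $n_i=-(\qf_i-\qf)\,n\,(\qf_i-\qf)^{-1}$, each side face is a skew parallelogram in the sense of Lemma~\ref{lem:buildSkewParallel}, with $n$ carried along the edge by the corresponding transport quaternion. I would then compute the non-unit edge lengths $\ell_1=\|g_1-g\|$ and $\ell_2=\|g_2-g\|$ of the front and left faces directly from the Sym--Bobenko frame and the Lax matrices~\eqref{eq:cmcFrame}, obtaining expressions in $\det(U)$, $\det(V)$, and $\alpha$.

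Next I would invoke the zero-curvature conditions $\det(U)=\det(U_2)$ and $\det(V)=\det(V_1)$ of Equation~\eqref{eq:zeroCurvatureCond}. These force the back and right faces to have the same edge lengths as the front and left, so all four sides share one pair $\ell_1,\ell_2$; the top and bottom are then automatically parallelograms built from the same $\ell_1,\ell_2$. To determine the folding parameters I would compute the real parts of the transport quaternions on each face via the identity $\partial_\alpha\det(U)=\det(U)\,\tr(U^{-1}\partial_\alpha U)$ together with $\Re(\qf_1-\qf)=\foh\tr(U^{-1}\partial_\alpha U)$ (and its $V$-analogue). The real part of the diagonal transport follows by applying the same identity to the double transport $V_1U$, whose logarithmic derivative splits additively, giving $\Re(\qg_2-\qg_1)=\Re(\qg_2-\qg)-\Re(\qg_1-\qg)$.

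The main obstacle, and the genuine content of the theorem, is the closing verification: substituting these edge lengths and real parts into the folding-parameter relation $\rho=\tfrac1\sigma\bigl(\sqrt{1-\sigma^2\ell_1^2}-\sqrt{1-\sigma^2\ell_2^2}\bigr)$ from Equation~\eqref{eq:realPartFoldingParameter} and solving for $\sigma$ on each face, one must recover the \emph{same} value everywhere. I expect the spectral-parameter dependence to conspire so that every instance yields $\sigma=\sin(2\alpha)$; this uniformity is exactly where the integrable structure, namely the precise form of the Lax matrices and the even splitting of the determinants, is indispensable, and it is the step most likely to demand careful algebra rather than a soft argument. Once $\sigma$ is confirmed constant across all six faces, $C_g$ is an equally-folded parallelogram cube and the theorem follows from Theorem~\ref{thm:equallyFolded}.
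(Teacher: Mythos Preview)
Your proposal is correct and follows essentially the same route as the paper: reduce to showing $C_g$ is an equally-folded parallelogram cube via Theorem~\ref{thm:equallyFolded}, compute the side edge lengths in terms of $\det(U),\det(V),\alpha$, use the determinant-splitting conditions to propagate these lengths to all faces, extract the real parts of the transport quaternions from the logarithmic-derivative identity for $\det$, and verify that the folding-parameter formula~\eqref{eq:realPartFoldingParameter} yields $\sigma=\sin(2\alpha)$ on every face. The paper carries out exactly these computations with the explicit intermediate formulas you anticipate.
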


\subsection{Discrete constant negative Gau{\ss} curvature surfaces}
\label{sec:knets}
We start with a general definition motivated by the smooth setting.

\begin{definition}[Constant negative curvature edge-constraint nets]
An edge-constraint net $(f,n)$ is said to have \emph{constant negative (Gau{\ss}) curvature} if every quad of the net has the same negative non-vanishing Gau{\ss} curvature $\K \in \R^-$.
\end{definition}

\begin{figure}[ht]
  \centering
  \includegraphics[width=\hsize]{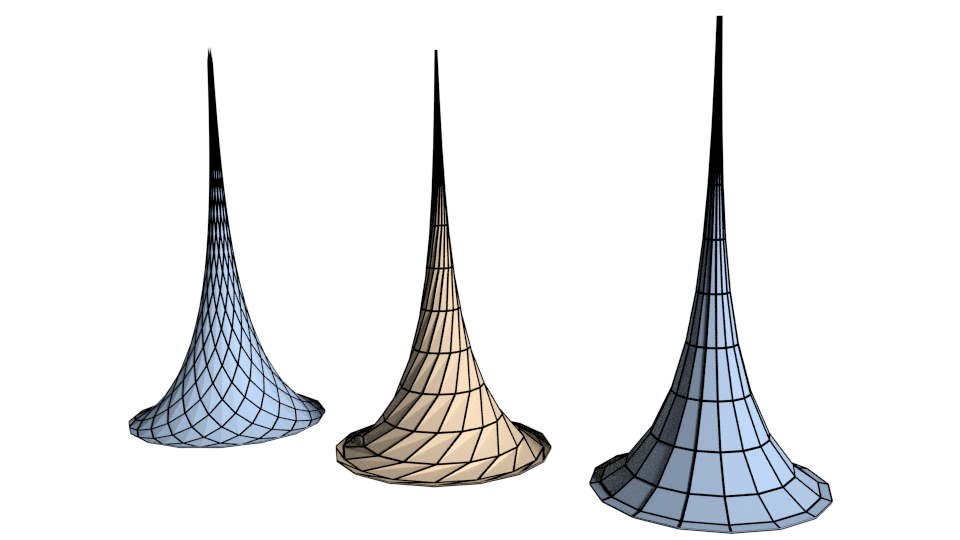}
  \caption{Three pseudospheres of revolution which are constant negative curvature edge-constraint nets. Left: Discrete asymptotic line parametrization (K-net). Middle: One discrete asymptotic line and one discrete curvature line parametrization. Right: Discrete curvature line parametrization (cK-net).}
  \label{fig:threePseudospheres}
\end{figure}

A surface parametrized by asymptotic lines has constant negative Gau{\ss} curvature if and only if the asymptotic lines form a \emph{Chebyshev net}, i.e., the parameter lines are parallel transports of each other in the sense of Levi-Civita. Thus, the directional derivative along each parameter line depends only on a single variable, so its integral curve exhibits a constant speed parametrization (possibly a different constant for each parameter line); Chebyshev nets are made up of "infinitesimal parallelograms" with side lengths $a,b \in \R^+$. Furthermore, the Gau{\ss} map of the asymptotic lines also form a Chebyshev net on the sphere. In these coordinates, the Gau{\ss}-Codazzi equation reduces to the well known sine-Gordon equation in the angle between the asymptote lines $u(x,y)$:
\begin{equation}
-\K a b \sin u(x,y) =  u_{xy}(x,y).
\end{equation}
This equation is invariant to the transformation $a \to \lambda a$ and $b \to \lambda^{-1} b$ for all $\lambda = e^t$ with $t \in \R$; varying $\lambda$ generates the \emph{associated family} of a constant negative curvature asymptotic line parameterized surface, where the angles between the asymptotic lines are invariant.

From the above characterization Sauer \cite{Sauer:1950ca,Sauer:1970tk} defined the following discrete analogue.
\begin{definition}[K-nets]
An edge-constraint net $(f,n): D \subset \Z^2 \to \R^3 \times \Stwo$ is a \emph{K-net} if it is an A-net and there exists two lengths $a, b > 0$ such that every immersion quad is a skew parallelogram (\emph{Chebyshev quad}) with edge lengths $a$ and $b$. \footnote{For K-nets we restrict to regular combinatorics because having more than two asymptotic lines meet at a point is incompatible with having negative Gau{\ss} curvature.}
\end{definition}
\noindent Note that the Gau{\ss} map of a K-net is also built from Chebyshev quads.

The associated family for K-nets was defined geometrically by Wunderlich \cite{Wunderlich:1951wc} using a transformation that, like in the smooth setting, preserves the interior angles of $f$ while scaling its edges. These geometric constructions agree with an algebraic description (similar to that introduced for constant mean curvature nets in the previous section) in terms of a discrete sine-Gordon equation and its Lax pair that is then integrated via a Sym--Bobenko formula to construct the net \cite{Bobenko:1996ug,Hoffmann:1999wq}. However, due to the inherent non-planarity of the quads in these nets, understanding their curvatures has remained elusive. The goal of this section is to show (using the geometric constructions) that K-nets indeed have constant negative Gau{\ss} curvature as edge-constraint nets.

K-nets can be constructed (up to global rotation and scale) directly from Cauchy data for their Gau{\ss} map \cite{pinkall2008designing}. Explicitly, a fourth Gau{\ss} map point $n_{12}$ is determined by completing the skew parallelogram through three other points $n, n_1, n_2 \in \Stwo$: \footnote{In other words the Gau{\ss} map satisfies the discrete Moutard equation in $\Stwo$ \cite{Nimmo:1997dg}.}
\begin{equation}
\label{eq:knetGaussquad}
n_{12} := \frac{n \cdot (n_1 + n_2)}{1 + n_1 \cdot n_2} (n_1 + n_2) - n.
\end{equation}
The immersion $f$ is constructed from $n$ via
\begin{equation}
\label{eq:knetImmersion}
f_1 - f := n_1 \times n~\mathrm{and}~ f_2 - f := n \times n_2.
\end{equation}
When $n_i \cdot n = \cos\Delta_i$ the edge lengths of $f$ are given by $\sin\Delta_i$ for $i = 1,2$. Moreover, applying Napier's analogies to the spherical parallelogram formed by $n$ the two interior angles $\alpha, \beta \in (0, \pi)$ of a K-net immersion quad are related by $e^{i \alpha} = \frac{e^{i \beta} k - 1}{e^{i \beta} - k}$, where $k = \tan\frac{\Delta_1}{2}\tan\frac{\Delta_2}{2}$ \cite{Bobenko:1996ug}.

The associated family can now be described by a family of pairs of Gau{\ss} map angles $(\Delta_1(\lambda), \Delta_2(\lambda))$ from which the K-nets are explicitly constructed.

\begin{definition}[K-net associated family]
Consider a K-net $(f,n)$ with the above notation and let $\lambda = e^t$ for all $t \in \R$. We construct a new K-net from $\Delta_1(\lambda)$ and $\Delta_2(\lambda)$ defined by the transformation
\begin{equation}
\label{eq:deltaAssoc}
\tan\frac{\Delta_1(\lambda)}{2} := \lambda \tan\frac{\Delta_1}{2}, ~ \tan\frac{\Delta_2(\lambda)}{2} := \lambda^{-1} \tan\frac{\Delta_2}{2},
\end{equation}
\end{definition}
\noindent The interior angles of every quad are invariant to this transformation and the edge lengths transform as $\sin\Delta_1(\lambda)$ and $\sin\Delta_2(\lambda)$.

From this construction one can compute the Gau{\ss} curvature of every quad directly yielding our main theorem.

\begin{theorem}
Every K-net has constant negative Gau{\ss} curvature.
\begin{proof}
Direct computation using Equations (\ref{eq:knetGaussquad}) and (\ref{eq:knetImmersion}) yields (up to global scaling)
\begin{equation}
\K = \frac{-2}{(n_1 + n_2) \cdot n} = \frac{-2}{\cos{\Delta_1(\lambda)} + \cos{\Delta_2(\lambda)}}.
\end{equation}
Note that $\K$ does depend on $\Delta_1(\lambda)$ and $\Delta_2(\lambda)$ but for a fixed $\lambda \in \R^+$ both of these angles are constant (by definition) for every quad of the K-net \footnote{Some authors define K-nets in the weakest sense where the $\Delta_1(\lambda)$ and $\Delta_2(\lambda)$ are allowed to vary along the parameter lines as single variable functions \cite{Bobenko:2008tn}. While still edge-constraint nets, they obviously do not have constant negative Gau{\ss} curvature.}. To have the same negative constant for all members of the associated family one must globally scale by a value dependent on $\lambda$.
\end{proof}
\end{theorem}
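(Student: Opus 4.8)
The plan is to compute the Gauß curvature $\K = A(n,n)/A(f,f)$ directly from the construction formulas (\ref{eq:knetGaussquad}) and (\ref{eq:knetImmersion}) and to show that it collapses to the claimed expression. Throughout I abbreviate $s := n_1 + n_2$ and read the Moutard equation (\ref{eq:knetGaussquad}) as the statement that $n + n_{12} = c\, s$, where $c := \frac{n\cdot(n_1+n_2)}{1+n_1\cdot n_2}$; that is, $n+n_{12}$ is parallel to $n_1+n_2$.

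First I would record the immersion diagonals in terms of the Gauß map. By (\ref{eq:knetImmersion}) and shifting, $f_1-f = n_1\times n$, $f_2 - f = n\times n_2$, $f_{12}-f_1 = n_1\times n_{12}$ and $f_{12}-f_2 = n_{12}\times n_2$, and closure of the quad is guaranteed exactly by the Moutard relation $(n_1+n_2)\times(n+n_{12})=0$. Summing edges and substituting $n+n_{12}=c\,s$ yields the clean diagonal expressions $f_{12}-f = c\,(n_1\times n_2)$ and $f_2 - f_1 = n\times(n_1+n_2)$, while the Gauß map diagonals are $n_{12}-n = c\,s - 2n$ and $n_2 - n_1$.

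Invoking the freedom in choosing partial derivatives, I would take the diagonals themselves as the discrete partials for both $f$ and $n$; the mixed area form then picks up the same Jacobian factor in numerator and denominator, so the ratio defining $\K$ is unaffected. Writing $N = W/\|W\|$ with $W := (n_{12}-n)\times(n_2-n_1)$ the cross product fixing the projection direction, one gets $A(n,n) = \det(n_{12}-n, n_2-n_1, N) = \|W\|$ immediately. For the immersion, a vector triple-product expansion gives $(f_{12}-f)\times(f_2-f_1) = -c\,T\,s$, where $T := \det(n_1,n_2,n)$, since the terms pairing $n_1\times n_2$ against $n_1+n_2$ vanish. A short scalar-triple-product computation gives $s\cdot W = 4T$, whence $A(f,f) = -c\,T\,(s\cdot N) = -4cT^2/\|W\|$ and
\[
\K = \frac{A(n,n)}{A(f,f)} = -\frac{\|W\|^2}{4\,c\,T^2}.
\]

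The heart of the matter, and the step I expect to be the main obstacle, is the algebraic identity $\|W\|^2\,(1 + n_1\cdot n_2) = 8\,T^2$, which eliminates all normal-dependent data. Expanding $W = 2\big(c\,(n_1\times n_2) + n\times n_1 - n\times n_2\big)$ and reducing every inner product to the Gram data $n\cdot n_1,\ n\cdot n_2,\ n_1\cdot n_2$ (using $\|n_i\|=1$ together with the standard cross-product identities) should verify it. Granting this identity and recalling $c(1+n_1\cdot n_2)=s\cdot n$, the ratio simplifies to $\K = -2/(s\cdot n) = -2/\big((n_1+n_2)\cdot n\big)$, which with $n\cdot n_i = \cos\Delta_i$ is the asserted formula; negativity follows since $c$ and $s\cdot n$ are positive for an admissible Gauß map. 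Finally, a K-net has globally fixed angles $\Delta_1,\Delta_2$ (equivalently fixed edge lengths $\sin\Delta_i$), so $\cos\Delta_1+\cos\Delta_2$ is the same on every quad and $\K$ is a single negative constant; passing to a member of the associated family replaces $\Delta_i$ by $\Delta_i(\lambda)$ through (\ref{eq:deltaAssoc}), still constant per quad for fixed $\lambda$, and a global rescaling normalizes the constant across the family.
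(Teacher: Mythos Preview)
Your proposal is correct and follows exactly the route the paper indicates with its terse ``direct computation'': you compute $A(n,n)/A(f,f)$ from (\ref{eq:knetGaussquad}) and (\ref{eq:knetImmersion}) using the diagonals as partials, and your key algebraic identity $\|W\|^2(1+n_1\cdot n_2)=8T^2$ is precisely what collapses the ratio to $-2/((n_1+n_2)\cdot n)$. The paper omits these details entirely, so your write-up is a faithful expansion rather than a different argument.
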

\begin{remark}
Wunderlich gave an interpretation for curvatures of K-nets in the symmetric case of $\Delta_1=\Delta_2$ \cite{Wunderlich:1951wc}: He interprets the circles that touch pairs of incident triangles in opposite points in the quad symmetry planes as the curvature circles and shows that the product of their radii is constant (see Figure~\ref{fig:curvatureCircles}). This quantity is in fact the Gau{\ss} curvature as defined by Equation (\ref{eq:meanGaussCurvature}); the radii of the circles are the ratios of diagonals in the $f$ and $n$ quadrilaterals and since the diagonals are perpendicular in both quadrilaterals the product of their lengths is proportional to the (projected) area.
\end{remark}
\begin{figure}[ht]
  \centering
  \includegraphics[width=.5\hsize]{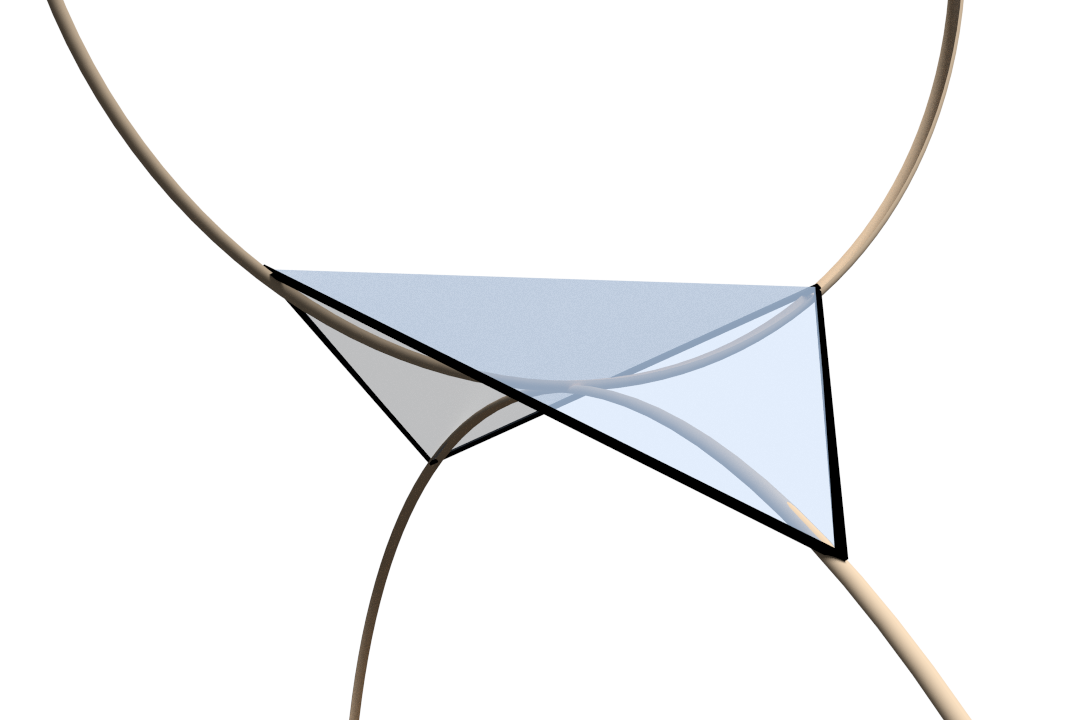}
  \caption{Wunderlich's curvature circles for a symmetric K-net quadrilateral.}
  \label{fig:curvatureCircles}
\end{figure}

Wunderlich's ideas stem from the fact that in the smooth setting the angular bisectors of the asymptotic lines are the curvature lines. Observe that for K-nets the diagonals of each quad do indeed satisfy the edge-constraint with the same normals, e.g., $f_{12} - f \perp n_{12} + n$ and for a K-net with all edge lengths equal we even have $n_{12}-n \parallel f_{12}-f$. This observation can be utilized to interpret the immersion edges of a circular net with constant negative Gau{\ss} curvature as diagonals in immersion quadrilaterals of K-nets (with equal lengths per K-net quadrilateral but in general different edge length for each circular net edge). Circular nets of constant negative Gau{\ss} curvature have been defined in \cite{Konopelchenko:1999te,Bobenko:2010eg} and naturally carry over to edge-constraint nets.

\begin{definition}
An edge-constraint net $(f,n): D \subset \Z^2 \to \R^3\times\Stwo$ that is a circular net with Gau{\ss} curvature $\K = -1$ is called a \emph{cK-net}.
\end{definition}
It turns out we can explicitly define cK-nets using their construction from K-nets as observed above: In Appendix \ref{appendix:cKnets} we give a Lax pair representation for cK-nets based on the Lax pair for K-nets and show that the  associated family for K-nets gives rise to an associated family for cK-nets in a natural way.
\begin{theorem}
  Every cK-net and its associated family are of constant negative Gau\ss{} curvature.
  \begin{proof}
    The way we define cK-nets here makes the statement for cK-nets tautological. The proof that their associated family has the same constant negative Gau{\ss} curvature can be found in Appendix \ref{appendix:cKnets}.
  \end{proof}
\end{theorem}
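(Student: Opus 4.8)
The claim for a cK-net itself is nothing more than its definition, since $\K = -1$ is imposed. All of the content lies in the associated family, so the plan is to show that each member again carries the same negative Gau{\ss} curvature on every quad, after one global rescaling that depends on the spectral parameter --- mirroring the K-net theorem, where one rescales to make $\K = -2/(\cos\Delta_1(\lambda) + \cos\Delta_2(\lambda))$ a common constant across the family.

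I would begin from the construction underlying the whole section: every immersion edge of a cK-net is realized as a diagonal of an auxiliary K-net quad of equal edge length. For such a skew rhombus the diagonal edge-constraint $f_{12} - f \perp n_{12} + n$ hands the cK-net its edge-constraint, while the diagonal parallelism $n_{12} - n \parallel f_{12} - f$ hands it the circular (reflection) condition of Equation (\ref{eq:circSym}). The associated family is then the one produced by the cK-net Lax pair of Appendix \ref{appendix:cKnets}, built on top of the K-net Lax pair: its spectral parameter $\lambda = e^t$ drives the auxiliary angles through the transformation (Equation (\ref{eq:deltaAssoc})), and I would use the induced motion of the auxiliary rhombi to pin down the family member at each $\lambda$.

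For the curvature itself I would compute $\K = A(n,n)/A(f,f)$ from Equation (\ref{eq:meanGaussCurvature}) and lean on circularity to simplify: for a circular net the immersion quad (itself planar, being concyclic) and its Gau{\ss} map quad lie in parallel planes with corresponding edges parallel, so the projection direction $N$ is their common normal and both mixed areas are honest planar areas. Feeding in the auxiliary data --- each cK-net edge being a diagonal whose length is fixed by the rhombus interior angle and the edge length $\sin\Delta_i(\lambda)$ --- I expect $A(n,n)$ and $A(f,f)$ to collapse to explicit products of these diagonal lengths, in the same spirit that the perpendicularity of skew-rhombus diagonals makes area proportional to a product of diagonal lengths in Wunderlich's picture. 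The target is a closed expression for $\K$, analogous to the one read off from Equations (\ref{eq:knetGaussquad}) and (\ref{eq:knetImmersion}), that is visibly the same on every quad.

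The main obstacle I anticipate is twofold, and it is exactly what the Appendix is set up to resolve. First, the two edge directions of the auxiliary rhombi scale differently under Equation (\ref{eq:deltaAssoc}) (by $\sin\Delta_1(\lambda)$ versus $\sin\Delta_2(\lambda)$), so the equal-edge rhombus picture does not persist naively and one must check that the circular condition nonetheless survives for every $\lambda$; this is where a dedicated Lax pair, rather than the bare diagonal geometry, is needed to guarantee closing and compatibility on the diagonal sublattice. Second, one must verify that all of the remaining $\lambda$-dependence in the diagonal-length ratio defining $\K$ factors into a single global scale, so that one rescaling --- as in the K-net case --- restores a common negative constant for the entire family. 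I would therefore carry out the final scaling computation against the explicit $\lambda$-dependent frame of Appendix \ref{appendix:cKnets} rather than against the auxiliary rhombi alone.
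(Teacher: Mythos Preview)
Your proposal correctly identifies that the cK-net case is definitional and that the work lies in the associated family, and your instinct to fall back on the explicit Lax frame of Appendix~\ref{appendix:cKnets} is the right one. However, two of your expectations about how that computation will unfold are mistaken, and the first of them undermines your primary strategy.

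First, circularity does \emph{not} survive in the associated family. The theorem in Appendix~\ref{appendix:cKnets} states explicitly that $f$ is circular only when $\lambda = 1$; for general real $\lambda$ the immersion quads are nonplanar (compare Figure~\ref{fig:kuen}, right). Your plan to ``lean on circularity to simplify'' the mixed areas --- treating the immersion and Gau{\ss} map quads as lying in parallel planes with honest planar areas --- therefore applies only at the base point of the family and cannot carry the argument for other $\lambda$. The auxiliary equal-edge rhombus picture likewise breaks: as you yourself note, the two edge directions scale differently under Equation~(\ref{eq:deltaAssoc}), and the diagonal parallelism $n_{12}-n \parallel f_{12}-f$ that gave circularity is lost. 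The paper does not attempt to repair this geometrically; it simply computes $A(n,n)/A(f,f)$ directly from the Sym formula (\ref{eq:Sym}) applied to the frame generated by $L,M$.

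Second, no global rescaling is needed. Unlike the K-net case, where $\K = -2/(\cos\Delta_1(\lambda)+\cos\Delta_2(\lambda))$ depends on $\lambda$ and one rescales to normalize, the cK-net Lax pair is arranged so that the Sym formula produces $\K = -1$ identically for every real $\lambda$. Your ``final scaling computation'' will therefore find nothing to scale. The substantive content of Appendix~\ref{appendix:cKnets} is the construction of $L$ and $M$ (including the non-obvious moduli of $l,m$ needed to keep the matrices quaternionic when $\sin\delta_i > 1$) so that this direct computation goes through; once that frame is in hand, the curvature claim is a straight algebraic verification, not a geometric reduction via rhombi.
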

\begin{figure}[htb]
  \centering
  \includegraphics[width=\hsize]{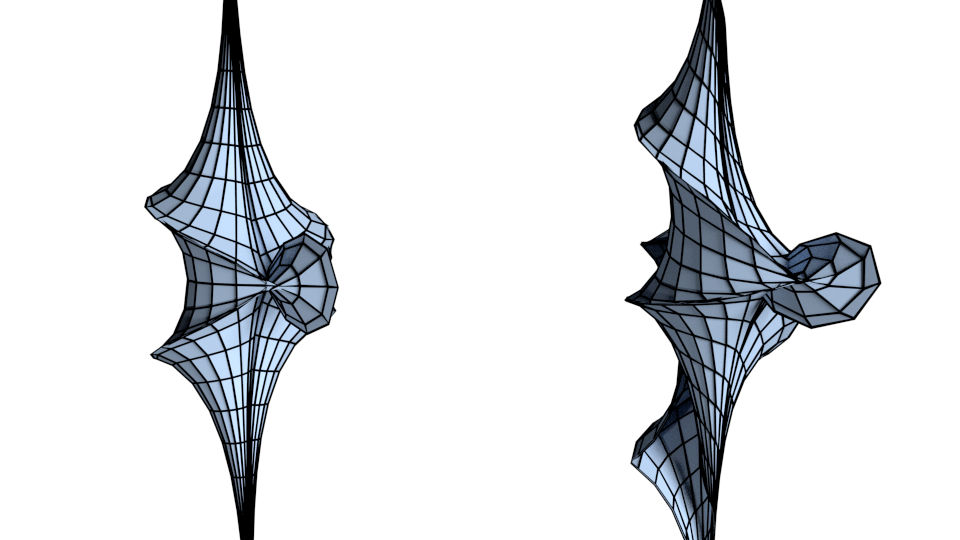}
  \caption{A cK-net Kuen surface and a member of its associated family.}
  \label{fig:kuen}
\end{figure}
Figure~\ref{fig:kuen} left shows a cK-net Kuen surface---a B\"acklund transform of the pseudosphere shown in Figure~\ref{fig:threePseudospheres} to the right. Such cK-net pseudospheres have been discussed by Konopelchenko and Schief \cite{Konopelchenko:1999te} as well as Bobenko, Pottmann, and Wallner \cite{Bobenko:2010eg}.
 Figure~\ref{fig:kuen} right shows a member of the associated family of the cK-net Kuen surface. Note, that the immersion quadrilaterals are no longer circular.

We can also create pseudospheres of revolution that have one asymptotic and one curvature line. The middle net of Figure \ref{fig:threePseudospheres} is generated by first constructing an asymptotic line with two degrees of freedom that are then used to impose rotational symmetry and constant negative Gau{\ss} curvature, respectively.

\subsection{Discrete developable surfaces}
\label{sec:developable}
We define developability exactly as in the smooth setting.

\begin{definition}[Developable edge-constraint net]
An edge-constraint net $(f,n)$ is called \emph{developable} if every quad has vanishing Gau{\ss} curvature ($\K = 0$), i.e., the Gau{\ss} map has zero area.
\end{definition}

As in the smooth setup, if the Gau{\ss} map is constant then the shape operator vanishes, otherwise there exists exactly one nonzero principal curvature $2 \mathcal{H}$ and corresponding curvature line, which is in fact even parallel to $n_x$ and $n_y$.

\begin{lemma}
Consider a single quad from a developable edge-constraint net $(f,n)$ with admissible projection direction $N \in \Stwo$ such that $N \perp \spann\{n_x, n_y\}$. If they exist, the nonzero principal curvature and curvature line are invariant to the choice of $N$.

\begin{proof} If $n$ is nonconstant then $\spann\{n_x, n_y\}$ is one dimensional. Consider the reduced coordinates where $n_x, n_y \in \R^3$ are both multiples of $e_1$, the first standard basis vector of $\R^3$, and let $f_x, f_y \in \R^3$. The set of admissible projection directions is then parametrized by an $\Sone$ degree of freedom in $e_2, e_3$. In these coordinates, direct computation (using that $f_x \cdot n_y = n_x \cdot f_y$) completes the proof.
\end{proof}
\end{lemma}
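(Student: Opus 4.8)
The plan is to fix orthonormal coordinates adapted to the degenerate Gau\ss{} map and then read off both the magnitude and the direction of the nonzero principal curvature as $N$ ranges over the circle of admissible projection directions. Since $\K = 0$ but $n$ is nonconstant, $\spann\{n_x, n_y\}$ is one-dimensional, so I choose coordinates with $n_x = p\,e_1$ and $n_y = q\,e_1$ for scalars $p,q$ (not both zero) and write $f_x = a_1 e_1 + a_2 e_2 + a_3 e_3$ and $f_y = b_1 e_1 + b_2 e_2 + b_3 e_3$. The admissible $N \perp e_1$ then sweep out a copy of $\Sone$, which I parametrize by $N = \cos\theta\, e_2 + \sin\theta\, e_3$. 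The first observation is that $\II$ is untouched by the projection $\pi$ (because $N \perp n_x, n_y$ forces $\hat f_x \cdot n_j = f_x \cdot n_j$) and is the rank-one outer product of $(a_1, b_1)^{\top}$ with $(p,q)$; in particular $\det\II = 0$, consistent with $\K = 0$, and exactly one principal curvature can be nonzero.

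For the magnitude, I would invoke Lemma \ref{lem:curvaturesAndFundamentalForms}: since one eigenvalue of $\So$ vanishes, the nonzero principal curvature equals $2\Hc = 2\, A(f,n)/A(f,f)$. Expanding the two mixed areas in the chosen coordinates, both $A(f,n)$ and $A(f,f)$ become expressions of the form $(\,\cdot\,)\sin\theta + (\,\cdot\,)\cos\theta$, so their ratio is a priori $\theta$-dependent. I would then show that the cross term obstructing constancy of this ratio factors as $(a_3 b_2 - a_2 b_3)(p b_1 - q a_1)$, and that its second factor vanishes by the symmetry of $\II$ (equivalently the edge-constraint), which in these coordinates reads $a_1 q = b_1 p$. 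Hence $\Hc$, and therefore the nonzero principal curvature $2\Hc$, is independent of $\theta$.

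For the direction, I would identify the nonzero eigenvector of $\So = \I^{-1}\II$ directly. Because $\II = (a_1, b_1)^{\top}(p,q)$ is rank one, the eigenvector for the nonzero eigenvalue is, in the basis $\hat f_x, \hat f_y$, the coordinate vector $\I^{-1}(a_1, b_1)^{\top}$. The key point is that $(a_1, b_1) = (\hat f_x \cdot e_1, \hat f_y \cdot e_1)$, since $e_1 \perp N$ gives $\hat f_x \cdot e_1 = f_x \cdot e_1$; as $\I$ is precisely the Euclidean metric restricted to the tangent plane $P$ in this basis and $e_1 \in P$, the coordinate vector $\I^{-1}(a_1, b_1)^{\top}$ represents the tangent vector $e_1$ itself. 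Thus the principal direction is $e_1$, i.e.\ parallel to $n_x$ and $n_y$, which is manifestly independent of the choice of $N$.

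The main obstacle is the magnitude step: a priori the $\theta$-dependence of $\I$ (hence of $A(f,f)$) could leave a residual dependence in the curvature, and the cancellation is invisible until one expands both mixed areas and recognizes the offending term as a multiple of $p b_1 - q a_1$. Seeing that this is exactly the quantity killed by the second-fundamental-form symmetry is the crux; once that is in hand both assertions follow, and the direction computation becomes essentially coordinate-free.
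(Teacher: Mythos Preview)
Your proof is correct and follows essentially the same approach as the paper's: reduce to coordinates where $n_x,n_y$ are multiples of $e_1$, parametrize the admissible $N$ by $\Sone$ in $\spann\{e_2,e_3\}$, and then compute directly, using the symmetry $f_x\cdot n_y = f_y\cdot n_x$ (your $a_1 q = b_1 p$) as the key cancellation. Your argument is a fully worked-out version of what the paper summarizes as ``direct computation,'' and your identification of the eigenvector with $e_1$ also recovers the observation just before the lemma that the curvature line is parallel to $n_x$ and $n_y$.
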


\begin{figure}
  \centering
  \includegraphics[width=0.55\hsize]{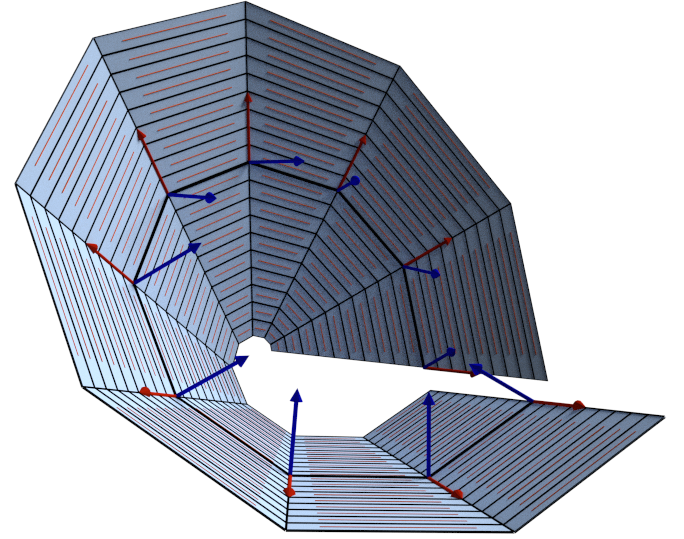}
    \includegraphics[width=0.4\hsize]{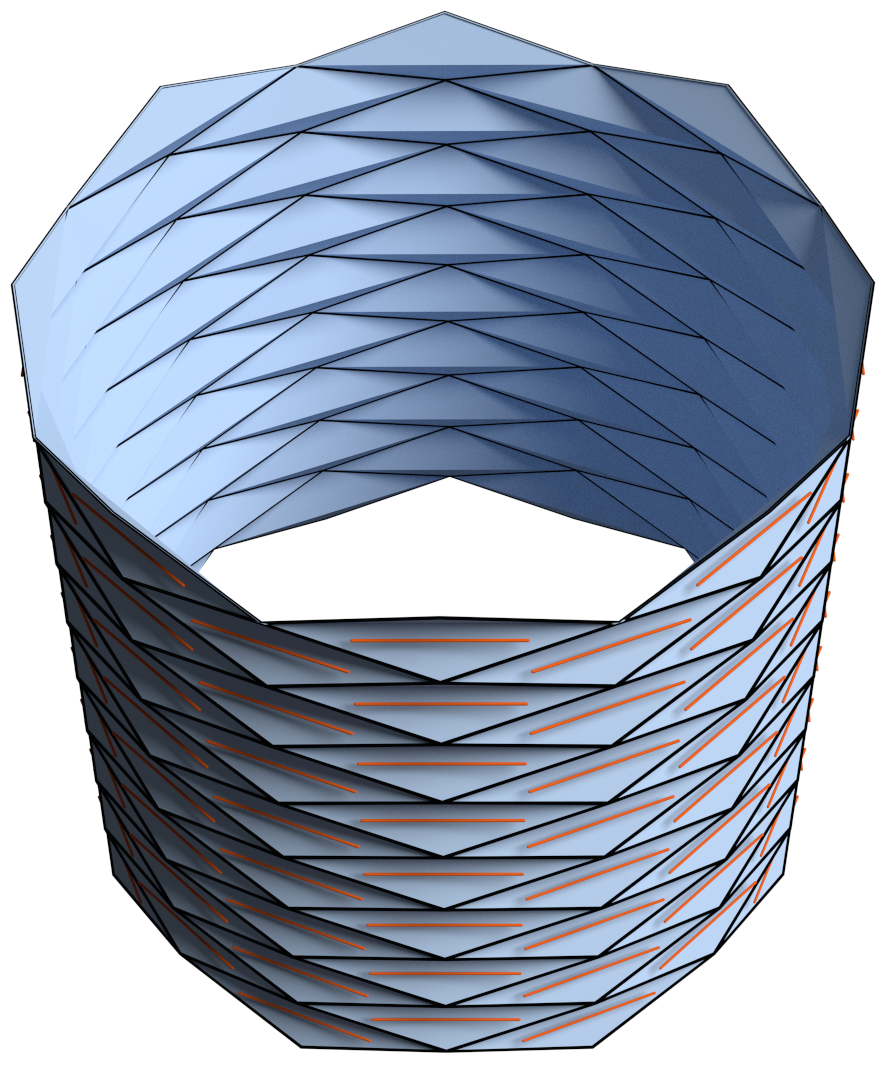}
  \caption{Left: A circular developable edge-constraint net generated from a discrete parallel framed polygonal helix (thick black). The Gau{\ss} map (blue) is constant along the discrete rulings (red). Observe that the net is built from planar strips quadrangulated by isosceles trapezoids and that the curvature line field (orange) is constant within each strip and parallel to the helix edge.
  Right: The Schwarz Lantern as a developable edge-constraint net with its curvature line field (orange).}
  \label{fig:developableNets}
\end{figure}

Surfaces of planar strips have been considered as discrete developable as they can obviously be unfolded into the plane \cite{Liu:2006tm}; such immersions correspond to developable curvature line edge-constraint nets, which are characterized by a discrete analogue of parallel framed curves \cite{Bishop:1975wg} (for example see Figure \ref{fig:developableNets} left):

A polygonal curve $\alpha$ with vertices $\alpha_0, ..., \alpha_k$ and two orthonormal vectors $(u_0, n_0)$ anchored at $\alpha_0$ extend to a unique \emph{discrete parallel frame} along $\alpha$; simply reflect through the perpendicular bisector planes of each edge of $\alpha$. \footnote{This discrete parallel frame can also be understood as being generated from rotations about the curve binormal since the composition of two reflections is a rotation.} Then $u_{i+1} - u_i$ and $n_{i+1} - n_i$ are both parallel to $\alpha_{i+1} - \alpha_i$ for all $i = 0, ..., k-1$. To extend the polygonal curve $\alpha$ to a developable edge-constraint net $(f,n)$, fix a sampling $y_j$ of the real line and define $f: (i,j) \mapsto \alpha_i + y_j \, u_i$ with Gau{\ss} map $n_i$. The resulting net $(f,n)$ is then in fact circular.

Conversely, for a surface $M$ built from a collection of planar strips with intersection lines $u$ one can find a discrete parallel framed curve $(\alpha, u, n)$ giving rise to a developable circular edge-constraint net whose immersion realizes $M$: choose an initial point $\alpha_0$ on an initial line $u_0$ and a unit normal $n_0 \perp u_0$. The reflection property then gives rise to unique $\alpha$ and $n$. It turns out that the curvature line is invariant to the initial choice of $n_0$ (it is always parallel to $\alpha$), while the mean curvature is not (which can be interpreted as extra information on how the developable surface locally bends).

Examples of developable edge-constraint nets that are not in curvature line parametrization arise from the associated family of the constant mean curvature discrete isothermic cylinder; this family contains the well known Schwarz Lantern \cite{Morvan:2008fe} (see Figure \ref{fig:developableNets} right) as an immersion with vertex normals that coincide with those of the smooth cylinder.

\section{Towards a conformal perspective}
\label{sec:conformal}
In the smooth setting two conformal immersions for a manifold $M$, $f, \tilde f:M \to \R^3 \cong \tup{Im}\Quat$, are said to be spin-equivalent if there exists a \emph{spin transformation} $\lambda: M \to \Quat^*$, such that $\tup{d}\tilde f = \bar \lambda \tup{d}f\lambda$; the surface normal $n$ transforms as $\tilde n = \lambda^{-1} n \lambda$. Geometrically, spin transformations correspond to stretch rotations of the tangent plane at every point. Therefore, they are conformal mappings and for simply connected domains any two surfaces which are conformally equivalent are related via a spin transformation. Kamberov, Pedit, and Pinkall \cite{kamberov1998} showed using spin transformations that one can classify all Bonnet pairs on a simply connected domain. Bonnet pairs are immersed surfaces that have the same metric and mean curvature but are not rigid body motions of each other.

We can define a discrete spin transformation by "stretch-rotating" the normal transport quaternions (Definition \ref{def:normalTransport}) of an edge-constraint net.

\begin{definition}[Discrete Spin Transformation]
\label{def:spinTransfo}
Let $(f,n)$ be an edge-constraint net with quad graph $\G$. The \emph{spin transformation} is a map $\lambda: \G \to \Quat^*$ which transforms $(f,n)$ to $(\tilde f, \tilde n)$. The normal at each vertex and the normal transport quaternions transform by:
\begin{equation}
\tilde n = \lambda^{-1} n \lambda, \quad \tilde \phi = \bar \lambda \phi \lambda_1, \quad \mathrm{and} \quad \tilde \psi = \bar \lambda \psi \lambda_2.
\end{equation}
If the immersion of the spin transformed quadrilateral closes (i.e., $\tilde \phi + \tilde \phi_2 + \tilde \psi + \tilde \psi_1$ is real) then one can construct a new edge-constraint net via 
\begin{equation}
\tilde f_1 - \tilde f = \Im\tilde \phi \quad \mathrm{and} \quad \tilde f_2 - \tilde f = \Im\tilde \psi.
\end{equation}
\begin{proof}
The Gau\ss\, map $\tilde n$ is still unit length since conjugation by a quaternion corresponds to a global rotation, so lengths are preserved. The edge constraint is satisfied since for an arbitrary edge (here denoted as first lattice shifts) we have
\begin{equation}
\tilde \phi^{-1} \tilde n \tilde \phi = \lambda_1^{-1} \phi ^{-1} \bar \lambda^{-1} \lambda^{-1} n \lambda \bar\lambda \phi \lambda_1 = \lambda_1^{-1} \phi^{-1} n \phi  \lambda_1 = \lambda_1^{-1} (-n_1) \lambda_1 = -\tilde n_1.
\end{equation}
\end{proof}
\end{definition}

The spin transformation is invertible in the following sense, yielding a notion of discrete conformity.
\begin{lemma}
If $(\tilde f, \tilde n)$ is a spin transform of $(f,n)$ with $\lambda$ then $(f,n)$ is a spin transformation of $(\tilde f, \tilde n)$ with $\lambda^{-1}$.
\end{lemma}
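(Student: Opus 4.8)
The plan is to verify that composing the spin transformation with parameter $\lambda$ and then the spin transformation with parameter $\mu := \lambda^{-1}$ returns the original data. Since a spin transformation (Definition \ref{def:spinTransfo}) is completely determined by its prescribed action on the Gau\ss{} map normal $n$ and on the two normal transport quaternions $\phi,\psi$ (Definition \ref{def:normalTransport}), after which the immersion is recovered by integrating the imaginary parts $\Im\phi$ and $\Im\psi$, it suffices to check that these three pieces of data are each restored by the backward transform. Before doing so I would secure the one conceptual point that must be in place: that $\tilde\phi = \bar\lambda\phi\lambda_1$ and $\tilde\psi = \bar\lambda\psi\lambda_2$ really are the normal transport quaternions of the transformed net $(\tilde f,\tilde n)$, so that the $\mu$-transform has the correct objects to act on. This is exactly what the computation in the proof of Definition \ref{def:spinTransfo} supplies, since it shows $\tilde\phi^{-1}\tilde n\tilde\phi = -\tilde n_1$, while $\Im\tilde\phi = \tilde f_1 - \tilde f$ holds by construction.

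For the normal the check is immediate. Writing $\hat n$ for the result of applying the $\mu$-transform to $\tilde n$, one has $\hat n = \mu^{-1}\tilde n\mu = \lambda\,\tilde n\,\lambda^{-1} = \lambda(\lambda^{-1} n \lambda)\lambda^{-1} = n$, so the conjugations telescope.

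For the transports, the single identity that does the work is the quaternionic relation $\overline{\lambda^{-1}} = (\bar\lambda)^{-1}$ (both equal $\lambda/|\lambda|^2$), together with the fact that the shift operator commutes with inversion, so $\mu_1 = \lambda_1^{-1}$. Applying the $\mu$-transform rule to $\tilde\phi$ gives $\hat\phi = \bar\mu\,\tilde\phi\,\mu_1 = \overline{\lambda^{-1}}\,(\bar\lambda\phi\lambda_1)\,\lambda_1^{-1} = \overline{\lambda^{-1}}\,\bar\lambda\,\phi = \phi$, and identically $\hat\psi = \psi$. Consequently $\hat f_1 - \hat f = \Im\hat\phi = \Im\phi = f_1 - f$ and $\hat f_2 - \hat f = \Im\hat\psi = \Im\psi = f_2 - f$, so the reconstructed immersion agrees with $f$ up to the global translation that a spin transformation leaves undetermined anyway. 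I expect the computation itself to present no real obstacle; the only step that is more than mechanical is the first-paragraph observation that $\tilde\phi,\tilde\psi$ are genuinely the transports of $(\tilde f,\tilde n)$, and the only place requiring a moment's care is the conjugation identity $\overline{\lambda^{-1}} = (\bar\lambda)^{-1}$ and the tracking of which factors are evaluated at the shifted vertex.
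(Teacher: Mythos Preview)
Your verification is correct. The paper states this lemma without proof, treating it as immediate from Definition~\ref{def:spinTransfo}; you have supplied precisely the computation one would expect, including the one genuinely necessary observation that $\tilde\phi,\tilde\psi$ are themselves normal transport quaternions for $(\tilde f,\tilde n)$ so that the backward transform is well posed.
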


\begin{figure}
  \centering
  \includegraphics[width=.7\hsize]{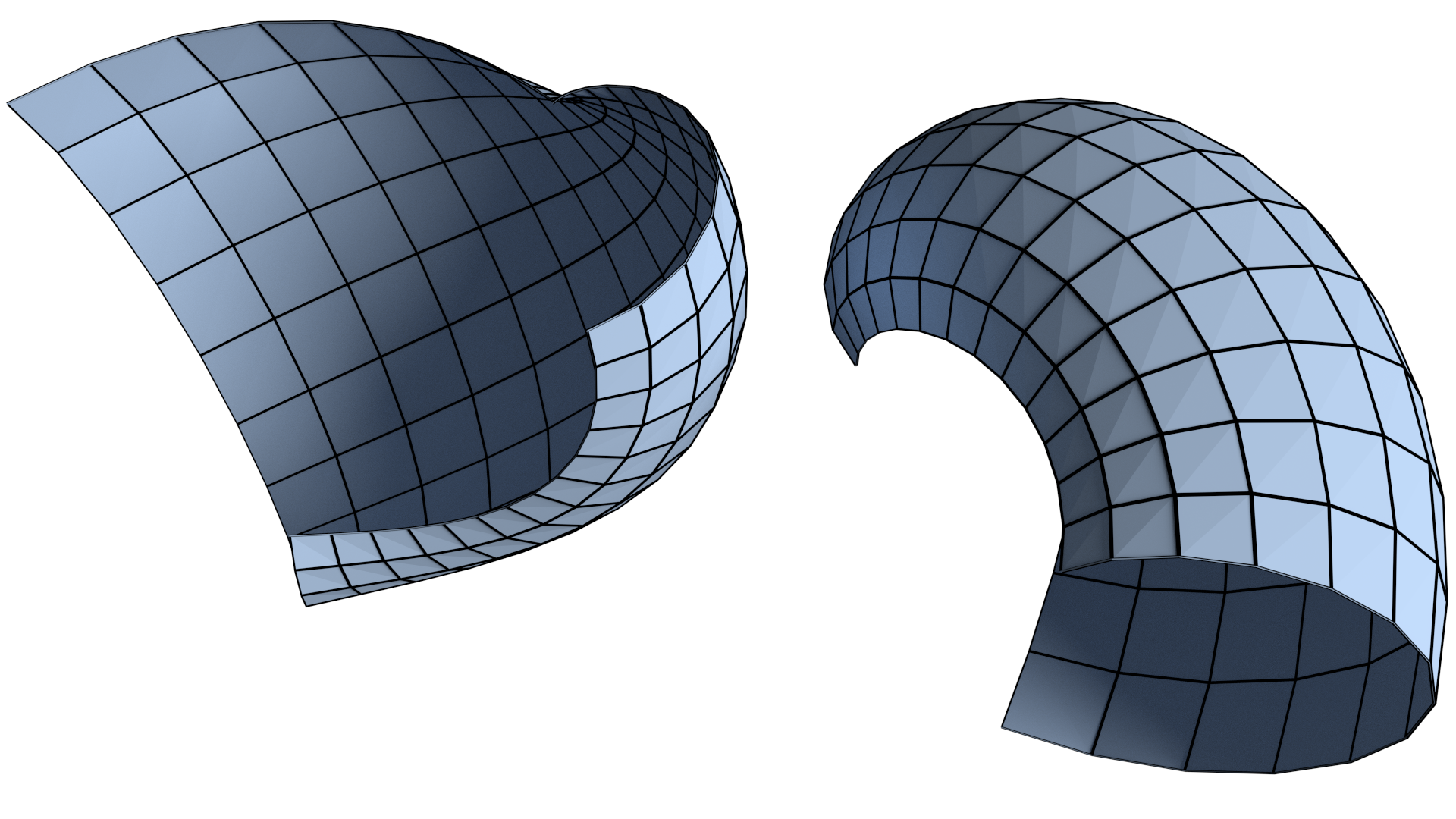}
  \caption{A Bonnet pair: two edge-constraint nets with the same mean curvature per quad that are discretely conformally equivalent but not rigid body motions of each other.}
  \label{fig:bonnetPairs}
\end{figure}

\begin{definition}[Discrete Conformal]
Two edge-constraint net quads are \emph{discretely conformally equivalent} if they are spin-transformations of each other.
\end{definition}

This spin transformation can give rise to edge-constraint net Bonnet pairs, an example is shown in Figure \ref{fig:bonnetPairs}. The Darboux transformations of discrete isothermic nets \cite{Hoffmann:1999vb} are also spin transformations, the proof is along the same lines as that of the following theorem.

\begin{theorem}
The nets in the associated family of a discrete isothermic minimal net are conformal to each other.
\begin{proof}
Perform the spin transformation with $\lambda = \cos(\frac{\alpha}{2}) - \sin(\frac{\alpha}{2})n$ on the discrete isothermic minimal nets given by the Weierstrass representation (Definition \ref{def:discreteWeierstrass}) and see that one recovers the associated family edge (Definition \ref{def:minimalAssocFamily}).
\end{proof}
\end{theorem}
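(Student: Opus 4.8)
The plan is to exhibit, for each $\alpha$, an explicit quaternionic spin transformation carrying the base net $(f^0,n)$ to the member $(f^\alpha,n)$, namely the vertex-based map $\lambda = \cos\tfrac\alpha2 - \sin\tfrac\alpha2\, n$ named in the statement (not to be confused with the complex spectral parameter $e^{i\alpha}$ of Definition \ref{def:minimalAssocFamily}). Because the invertibility lemma above makes conformal equivalence symmetric, and because composing two spin transformations is again a spin transformation so the relation is transitive, it suffices to relate every member to the single base net $(f^0,n)$; mutual conformality of the whole family then follows.

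First I would check that the Gau\ss\ map is preserved, as it must be since the associated family shares one Gau\ss\ map. Since $n$ is a unit imaginary quaternion and $\lambda$ is a real linear combination of $\qOne$ and $n$, $\lambda$ is unit length and commutes with $n$, so a one-line computation gives $\tilde n = \lambda^{-1} n\lambda = n$. Next I would pin down the normal transports of the base net: because $(f^0,n)$ is circular, Equation (\ref{eq:circSym}) says $n_i$ is the reflection of $n$ in the plane perpendicular to the edge, so the normal transport quaternions of Definition \ref{def:normalTransport} may be taken purely imaginary, $\phi^0 = f^0_1 - f^0$ and $\psi^0 = f^0_2 - f^0$ (vanishing real part).

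The heart of the proof is then the edge computation $\Im\tilde\phi = \Im(\bar\lambda\,\phi^0\,\lambda_1)$. Writing $c = \cos\tfrac\alpha2$, $s = \sin\tfrac\alpha2$ and expanding $(c + s n)(f^0_1 - f^0)(c - s n_1)$ into products of imaginary quaternions, the key simplification comes from the Weierstrass relation $f^0_1 - f^0 = (n_1-n)/\|n_1-n\|^2$, i.e. $f^0_1 - f^0 \parallel (n_1 - n)$: this collapses the triple products, and the half-angle identities $c^2 - s^2 = \cos\alpha$, $2cs = \sin\alpha$ then reorganize the result into $\cos\alpha\,(f^0_1 - f^0) - \sin\alpha\,\|f^0_1 - f^0\|^2\,(n_1\times n)$. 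This is exactly the associated-family edge $f^\alpha_1 - f^\alpha$ of Equation (\ref{eq:geometricAssocFamily}). The second lattice direction is identical, the only change being the sign already present in $f^0_2 - f^0 = -(n_2 - n)/\|n_2 - n\|^2$, which reproduces $f^\alpha_2 - f^\alpha$.

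Finally, I would note that the closing hypothesis in Definition \ref{def:spinTransfo} is automatic here: $(f^\alpha,n)$ was already shown to be a genuine edge-constraint net, so its edges close around every quad, and the spin-transformed net is therefore precisely $(f^\alpha,n)$. The main obstacle is purely the quaternionic bookkeeping in $\Im(\bar\lambda\,\phi^0\,\lambda_1)$: this is a conjugation by \emph{different} quaternions at the two endpoints ($\bar\lambda$ at $f$ versus $\lambda_1$ at $f_1$), so it is not a rigid rotation and the cross-product terms do not cancel automatically; the parallelism $\phi^0 \parallel (n_1 - n)$ is what makes everything collapse, and keeping track of the real versus imaginary parts of the triple products is where care is required.
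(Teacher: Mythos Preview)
Your proposal is correct and follows precisely the route indicated in the paper's one-line proof: the paper merely names the spin transformation $\lambda = \cos\tfrac\alpha2 - \sin\tfrac\alpha2\,n$ and asserts that it recovers the associated-family edge, while you have carried out that verification in full, correctly using the circularity of $(f^0,n)$ to take the base normal transports purely imaginary and the Weierstrass parallelism $f^0_i - f^0 \parallel (n_i - n)$ to collapse the quaternionic products into Equation~(\ref{eq:geometricAssocFamily}).
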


\section{Lax pair edge-constraint nets}
\label{sec:LaxPairedgeConstraint}
The language of moving frames combined with the theory of integrable systems offers a powerful tool with which to unify and systematically discretize parametrized surface theory for particular types of special parametrizations \cite{Bobenko:1999us,Bobenko:2008tn}. We used this explicitly for showing that discrete isothermic constant mean curvature nets \cite{Pedit:1995wp,Bobenko:1999us,Hoffmann:1999vm} and their associated families are constant mean curvature edge-constraint nets. Although not explicitly used here, K-nets and their associated families also possess algebraic formulations in this integrable framework \cite{Bobenko:1996vq,Bobenko:1996ug}. 
In general, this description is based on the existence of a Lax pair governing the structure equation of the surface---which can then be integrated via the Sym--Bobenko formula \cite{Bobenko:1994tv,Sym:1985kl}---or in the discrete case alternatively by the 3D consistency of the governing equation \cite{Bobenko:2008tn}. We now state a condition on Lax matrices (expressed as elements in the space of invertible quaternions $\Quat^*$) that guarantees the resulting nets will be edge-constraint. The condition is not very strong, and after an appropriate gauge transformation many discrete integrable systems related to surface theory exhibit such representations.

\begin{theorem}
Let $\lambda(\alpha)$ be a spectral parameter depending on a real parameter $\alpha$. Let $\Phi \in \Quat^*$ be a moving frame defined by shifts $\Phi_1 := U(\lambda) \Phi$ and $\Phi_2 := V(\lambda) \Phi$ (starting from a fixed $\Phi$), where $U(\lambda),V(\lambda) \in \Quat^*$ are the $\emph{Lax matrices}$ satisfying the compatibility condition $V_1(\lambda) U(\lambda) = U_2(\lambda) V(\lambda)$. Let $s \in \R$ be an arbitrary coefficient. Then the family of discrete contact element nets $(f^\alpha,n^\alpha)$ given by the \emph{Sym--Bobenko} formula
\begin{eqnarray}
	n^\alpha &:=& \Phi^{-1} \qk \Phi \nonumber \\
	\qf^\alpha &:=& s\, \Phi^{-1}\frac{d}{d\alpha}\Phi\vert_{\lambda(\alpha)} + t n = s\Phi^{-1}\Phi_\alpha + tn, \label{eq:genSymBobenko} \\
	f^\alpha &:=& \Im \qf, \nonumber
\end{eqnarray}
for some $t \in \R$ are edge-constraint nets if and only if the Lax matrices $U(\lambda)$ and $V(\lambda)$ depend on $\lambda$ only in their off-diagonal entries. The edge-constraint is encoded in the relationships
\begin{equation}
\label{eq:laxTransports}
n_1 = -(\qf_1 - \qf) n (\qf_1 - \qf)^{-1} ~ \mathrm{and} ~ n_2 = -(\qf_2 - \qf) n (\qf_2 - \qf)^{-1}.
\end{equation}
\begin{proof}
The proof is equivalent for both lattice directions, so we provide details for the first one, resulting in a condition on the $U(\lambda)$ Lax matrix:
Up to a global rotation by $\Phi$ we find
\begin{eqnarray*}
(\qf_1 - \qf) n &=& (s\, U^{-1}U_\alpha + t(U^{-1} \qk U - \qk))\qk \\
&=& U^{-1} ( s\, U_\alpha + t(\qk U - U \qk)\qk \\
&=& U^{-1} \left( s\, U_\alpha \qk + t (\qk U \qk + U) \right) \\
&=& U^{-1} \left(s\, U_\alpha \qk + t ((-\qk U) (-\qk) + (-\qk U) (-\qk U)^{-1}U) \right) \\
&\stackrel{!}{=}& U^{-1} (- \qk U) (\qf_1 - \qf)\\
&=& -n_1 (\qf_1 - \qf)
\end{eqnarray*}
precisely when:
\begin{equation}
U_\alpha \qk = -\qk U_\alpha,
\end{equation}
which is equivalent to $U$ having only off-diagonal entries dependent on $\lambda(\alpha)$.
\end{proof}
\end{theorem}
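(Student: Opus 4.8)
The plan is to recognize that, in this framework, the edge-constraint along an edge is encoded by the conjugation identity (\ref{eq:laxTransports}), and to reduce that identity to a purely algebraic condition on a single Lax matrix. Since the two lattice directions are structurally identical, I would carry out the computation for the first direction, obtaining a condition on $U$, and then invoke the verbatim argument with $V$ for the second.

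First I would rewrite the edge difference $\qf_1 - \qf$ so that the frame $\Phi$ is stripped off. Using $\Phi_1 = U\Phi$ together with the product and chain rules gives $\Phi_1^{-1}(\Phi_1)_\alpha = \Phi^{-1}U^{-1}U_\alpha\,\Phi + \Phi^{-1}\Phi_\alpha$, while $n_1 - n = \Phi^{-1}(U^{-1}\qk U - \qk)\Phi$. Substituting into the Sym--Bobenko formula yields $\qf_1 - \qf = \Phi^{-1}W\Phi$ with $W := s\,U^{-1}U_\alpha + t\,(U^{-1}\qk U - \qk)$. Because conjugation by the invertible quaternion $\Phi$ cancels out of every conjugation identity, the whole problem collapses to an identity among $U$, $U_\alpha$, and $\qk$ alone.

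Next I would explain why (\ref{eq:laxTransports}) delivers the edge-constraint. Conjugation by the invertible quaternion $\qf_1 - \qf = \Re(\qf_1-\qf) + (f_1 - f)$ acts on $\R^3 \cong \Im\Quat$ as a length-preserving rotation whose axis is the imaginary part $f_1 - f$. Hence $-(\qf_1 - \qf)\,n\,(\qf_1-\qf)^{-1}$ rotates $n$ about $f_1 - f$ and then negates, so it reverses precisely the component of $n$ along $f_1 - f$; therefore $n_1 + n$ has no component along $f_1 - f$, which is exactly $f_1 - f \perp \foh(n_1 + n)$. Taking real parts and using $\Re(pq) = -\,p\cdot q$ for $p,q \in \Im\Quat$ confirms that this scalar constraint is the real shadow of the quaternionic identity $(\qf_1 - \qf)\,n = -\,n_1\,(\qf_1 - \qf)$, which is (\ref{eq:laxTransports}) rearranged.

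The core of the argument is then to determine when this rearranged identity holds. Multiplying out and clearing $\Phi$, I would compute $(\qf_1 - \qf)\,n + n_1\,(\qf_1 - \qf) = \Phi^{-1}U^{-1}(UW\,\qk + \qk\,UW)\,\Phi$, where $UW = s\,U_\alpha + t\,(\qk U - U\qk)$. The pivotal observation is that, identifying $\qk$ with $-i\sigma_3$, the commutator $\qk U - U\qk$ is off-diagonal for every $U$, while a $2\times 2$ matrix anticommutes with $\sigma_3$ exactly when it is off-diagonal; consequently the anticommutator $UW\,\qk + \qk\,UW$ only sees the diagonal of $UW$, and the $t$-term contributes nothing there. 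Thus the identity holds iff the diagonal of $s\,U_\alpha$ vanishes, and (assuming $s\neq 0$ and $\lambda'(\alpha)\neq 0$, with $U_\alpha = U_\lambda\,\lambda'(\alpha)$) this is precisely the requirement that the diagonal entries of $U$ be independent of $\lambda$. I expect the main obstacle to be the quaternion/Pauli bookkeeping in this step: verifying that all $t$-proportional terms collapse into the automatically off-diagonal commutator, so that only the diagonal of $U_\alpha$ survives to govern the condition. The identical computation with $V$ then closes both directions of the equivalence.
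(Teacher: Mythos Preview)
Your proposal is correct and follows essentially the same route as the paper: conjugate away $\Phi$ to reduce $(\qf_1-\qf)\,n = -\,n_1\,(\qf_1-\qf)$ to an identity in $U$, $U_\alpha$, and $\qk$, and identify the resulting condition as $U_\alpha\qk = -\qk U_\alpha$, i.e., $U_\alpha$ off-diagonal. Your anticommutator bookkeeping (observing that $[\qk,U]$ is automatically off-diagonal, so the $t$-term drops from $\{UW,\qk\}$) is a clean variant of the paper's explicit factoring of $\qk U\qk + U$ through $-\qk U$, but the substance is the same.
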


\section{Acknowledgements}
We thank Julia Plehnert and Henrik Schumacher for helpful discussions, particularly with helping to resolve the definitions with a developable theory of edge-constraint nets.

\newpage
\appendix
\newcommand{\quadmatrix}[4]{\left(\begin{array}{cc}#1&#2\\#3&#4\end{array}\right)}
\section{cK-net primer}
\label{appendix:cKnets}

Following \cite{Bobenko:1996ug} a Lax representation for K-nets is given by the $SU(2)$ matrices
\begin{equation}
  \label{eq:KLaxPair}
  \begin{array}{rcl}
    U &=& \quadmatrix{\cot(\frac{\sigma_u}2)\frac {H_1}{H}}{i \lambda}{i \lambda}{\cot(\frac{\sigma_u}2)\frac {H}{H_1}}\\
    V &=& \quadmatrix{1}{\frac{i}{\lambda}\tan(\frac{\sigma_v}2)H_2H}{\frac{i}{\lambda}\tan(\frac{\sigma_v}2)\frac1{H_2H}}{1}    
  \end{array}
\end{equation}
depending on a (real) spectral parameter $\lambda$ with $H = e^{i h}$ for $h \in \R$ and the matrix problem
\begin{equation}
  \label{eq:ZeroCurvature}
\Phi_1 = U \Phi, \quad \Phi_2 = V \Phi.
\end{equation}
The integrability condition $V_1U = U_2V$ is then equivalent to $h$ solving the Hirota equation \cite{Hirota:1977tx}
\begin{equation}
e^{i(h_{12}+h)} - e^{i(h_1+h_2)} =\tan\frac{\sigma_u}2\tan\frac{\sigma_v}2 \left( 1-e^{i(h+h_1+h_{12}+h_2)} \right).
\end{equation}

Given the observation that the diagonals in K-nets satisfy the edge-constraint one can consider matrices
\begin{equation}
\mathcal L = V_1U = \quadmatrix{\cot\frac{\delta}2\frac{H_1}H + \tan\frac{\delta}2 H_1H_{12}}{i(\lambda -\frac{HH_{12}}\lambda)}{i(\lambda - \frac1{\lambda HH_{12}})}{\cot\frac{\delta}2\frac{H}{H_1} + \tan\frac{\delta}2 \frac{1}{H_1H_{12}}}
\end{equation}
by choosing $\delta = \sigma_u = -\sigma_v$. Setting aside the fact that $\mathcal L$ arises as the product of two matrices along edges of a K-net, we can investigate the compatability condition for  $\mathcal L$ assigned to edges of a lattice. After relabeling the entries we have
\begin{equation}
\begin{array}{rcl}
  \label{eq:LaxPair}
L &=&  \quadmatrix{\cot\frac{\delta_1}2\frac{l}s + \tan\frac{\delta_1}2 ls_1}{i(\lambda -\frac{ss_{1}}\lambda)}{i(\lambda - \frac1{\lambda ss_{1}})}{\cot\frac{\delta_1}2\frac{s}{l} + \tan\frac{\delta_1}2 \frac{1}{ls_{1}}} \\
M &=&  \quadmatrix{\cot\frac{\delta_2}2\frac{m}s + \tan\frac{\delta_2}2 ms_2}{i(\lambda -\frac{ss_{2}}\lambda)}{i(\lambda - \frac1{\lambda ss_{2}})}{\cot\frac{\delta_2}2\frac{s}{m} + \tan\frac{\delta_2}2 \frac{1}{ms_{2}}}
\end{array}
\end{equation}
with unitary variables $s$ at vertices of a square lattice and $l$ and $m$ on
edges in first and second lattice directions.
The compatibility condition
\begin{equation}
  \label{eq:integrabilityCondition}
 M_1L =L_2M
\end{equation}
implies $\det M_1\det L = \det L_2 \det M$ and in the spirit of the
K-net case we thus assume $\delta_1$ is constant in the second
lattice direction and $\delta_2$ is constant in the first one.
Then Equation (\ref{eq:integrabilityCondition}) can be solved, i.e., given $s, s_1, s_2, l, m, \delta_1$, and $\delta_2$ then $m_1, l_2,$ and $s_{12}$ are uniquely determined. 
However, in order to be able to have arbitrary edge lengths for the resulting net, one needs to allow for $\sin\delta_i>1$ and thus complex $\delta_i$. To keep $L$ and $M$ quaternionic $l$ and $m$ will no longer be unitary but have absolute value
\begin{equation}
\sqrt{\frac{\cos\left(\rho_i-\arg\tan\frac{\delta_i}2\right)+\cos\left(\rho-\arg\tan\frac{\delta_i}2\right)}{\cos\left(\rho_i+\arg\tan\frac{\delta_i}2\right)+\cos\left(\rho+\arg\tan\frac{\delta_i}2\right)}}
\end{equation}
where $s=e^{i \rho}$.

\begin{theorem}
  Let $\Phi:\Z^2\to\R^3$ be a solution to $\Phi_1 = L\Phi$, $\Phi_2 =
  M\Phi$ with $L$ and $M$ as in Equation (\ref{eq:LaxPair}) solving the
  integrability condition Equation (\ref{eq:integrabilityCondition}). Then
  $f:\Z^2\to \R^3$ and $n:\Z^2\to\Stwo$ given by
\begin{equation}
  \label{eq:Sym}
f = 2\Phi^{-1}\frac{\partial}{\partial t}\Phi ~ \mathrm{and} ~  n = \Phi^{-1}\qk\Phi
\end{equation}
is an edge-constraint net of Gau{\ss} curvature $K=-1$. If $\lambda =
1$, $f$ is circular. Moreover any circular edge-constraint
quadrilateral with parallel normals and $K=-1$ can be generated this
way.
\end{theorem}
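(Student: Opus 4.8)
The plan is to treat the four assertions in turn, leaning on the machinery already assembled. The edge-constraint comes for free from the general Lax-pair result of Section~\ref{sec:LaxPairedgeConstraint}. Indeed, (\ref{eq:Sym}) is precisely the Sym--Bobenko formula (\ref{eq:genSymBobenko}) in the case $s=2$ with vanishing normal offset, the real parameter driving the spectral parameter being denoted $t$ here. Inspecting $L$ and $M$ in (\ref{eq:LaxPair}), the spectral parameter $\lambda$ enters only the off-diagonal entries $i(\lambda-\tfrac{ss_1}{\lambda})$, $i(\lambda-\tfrac1{\lambda s s_1})$ (and their analogues for $M$), while the diagonal entries are $\lambda$-independent. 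Since $L,M\in\Quat^*$ and satisfy the compatibility condition (\ref{eq:integrabilityCondition}) by hypothesis, the hypotheses of that theorem hold verbatim, so $(f,n)$ is an edge-constraint net with normal transports governed by (\ref{eq:laxTransports}).

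For the Gauss curvature I would compute the relevant mixed areas directly from the frame. Conjugation by $\Phi$ is a common rotation and leaves all determinants, hence mixed areas, invariant, so I may work in a single frame and represent the Gau\ss{} map by $n\sim\qk$, $n_1\sim L^{-1}\qk L$, $n_2\sim M^{-1}\qk M$, $n_{12}\sim(M_1L)^{-1}\qk(M_1L)$, and the edges by $\qf_1-\qf\sim 2L^{-1}L_t$, $\qf_2-\qf\sim 2M^{-1}M_t$, with diagonal $\qf_{12}-\qf\sim 2(M_1L)^{-1}(M_1L)_t$. Taking imaginary parts lands these in $\R^3\cong\Im\Quat$, and, using the freedom established in the reparametrization remark to adopt the quad diagonals as discrete partial derivatives, I would evaluate $A(f,f)=\det(f_x,f_y,N)$ and $A(n,n)=\det(n_x,n_y,N)$ and show their quotient is identically $-1$ for every real $\lambda$ (so that the whole associated family is of constant curvature). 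The essential input making this collapse to exactly $-1$ is the precise off-diagonal form together with the constraints (\ref{eq:integrabilityCondition}) on $l,m,\delta_1,\delta_2$, and the normalizing factor $2$ in (\ref{eq:Sym}); this is where the fact that $L,M$ are modeled on the K-net diagonal transport $\mathcal{L}=V_1U$, which already yields $\K=-1$, does the work. I expect this simplification to be the main obstacle: the unsimplified determinants are unwieldy, and the cancellation to the clean value $-1$ must be forced through the compatibility relations rather than by brute expansion.

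To prove circularity at $\lambda=1$ I would exploit that $L_t=\dot\lambda\,L_\lambda$ is purely off-diagonal (since $\lambda$ sits only off the diagonal), while at $\lambda=1$ the off-diagonal part of $L$ itself becomes $i(1-ss_1)$, $i(1-\tfrac1{ss_1})$. From these explicit entries I would verify the circular-net reflection property (\ref{eq:circSym}), i.e.\ that $n_1-n\parallel f_1-f$ and $n_2-n\parallel f_2-f$; together with the edge-constraint this forces the four immersion vertices to be concyclic. Combined with the previous step this certifies that the $\lambda=1$ member is a cK-net in the sense of the main text.

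Finally, for the converse I would invert the construction. Given a circular edge-constraint quad whose Gau\ss{} map edges are parallel to the immersion edges and with $\K=-1$, I would write its normal transports as quaternions and match them against the off-diagonal structure of $L,M$ at $\lambda=1$; this determines the unitary vertex variable $s$ and edge variables $l,m$ up to the residual gauge, while the two edge lengths together with the curvature normalization fix $\delta_1,\delta_2$. One then checks that the reconstructed $L,M$ satisfy (\ref{eq:integrabilityCondition}), so that a frame $\Phi$ exists reproducing the given quad through (\ref{eq:Sym}). This is essentially a solvability and dimension count, and I expect it to be routine once the forward direction---and in particular the curvature normalization---is in hand.
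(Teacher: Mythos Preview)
Your forward direction matches the paper's: both simply say that the edge-constraint follows from the off-diagonal $\lambda$-dependence via the general Lax-pair theorem, and that circularity at $\lambda=1$ and $\K=-1$ for all real $\lambda$ are obtained by direct computation (the paper gives no further detail than ``can be computed directly'').

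The converse is where the two diverge. You propose to reconstruct $s,l,m,\delta_1,\delta_2$ from the given quad by matching normal transports and edge lengths, and then to \emph{verify} that the resulting $L,M$ satisfy the compatibility condition (\ref{eq:integrabilityCondition}). The paper instead uses a pure uniqueness argument: it observes that a circular edge-constraint quad with parallel normals and $\K=-1$ is uniquely determined by three vertices and one normal, and that this is exactly the Cauchy data needed to solve (\ref{eq:integrabilityCondition}) for $s_{12},l_2,m_1$. Since both the geometric and the Lax problems have unique solutions from the same initial data, and the forward direction already shows every Lax solution lands in the geometric class, the two must coincide. This sidesteps any explicit compatibility check, which in your approach is the step most likely to become laborious; on the other hand, your route would actually produce the Lax data for a given quad, which the paper's argument does not.
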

\begin{proof}
  Circularity in the case of $\lambda = 1$ and the Gau{\ss} curvature
  $K=-1$ for any real $\lambda$ can be computed directly.  To see that
  every quadrilateral arises this way, one can show that the quad
  with its normals is uniquely defined by three vertices and a
  normal. However this is exactly the initial data one prescribes for
  the compatibility condition of the Lax pair. Since both problems
  have a unique solution they must coincide.
\end{proof}

\newpage
\bibliography{mybib}

\todos

\end{document}